\newtheorem{theorem}{Theorem}[section]
\newtheorem{lemma}[theorem]{Lemma}
\newtheorem{proposition}{Proposition}
\theoremstyle{definition}
\newtheorem{definition}[theorem]{Definition}
\newtheorem{remark}{Remark}
\newtheorem*{notation}{Notation}
\DeclareMathOperator{\rep}{rep}
\DeclareMathOperator{\re}{Re}
\DeclareMathOperator{\sech}{sech}
\DeclareMathOperator{\Span}{span}
\DeclareMathOperator{\ess}{ess}
\DeclareMathOperator{\ran}{ran}
 \DeclareMathOperator{\csch}{csch}
\DeclareMathOperator{\eq}{eq}
 \DeclareMathOperator{\opl}{\emph{\textbf{l}}}
   \newcommand\dom{\operatorname{dom}}
 \newcommand{\bb}[1]{\mathbf{#1}}
\newcommand{\EE}{\mathcal E}
\title[Instability of excited states for the NLS-$\delta$ equation on a star graph] %Use the shortened version of the full title
      {On the orbital instability of excited states for the  NLS equation with the $\delta$-interaction\\ on a star graph}
\author[Jaime Angulo Pava and Nataliia Goloshchapova]{}
\subjclass[2010]{Primary: 35Q55, 81Q35, 37K40, 37K45; Secondary: 47E05.}
 \keywords{Nonlinear Schr\"odinger equation, point interaction, self-adjoint extension, deficiency indices, orbital stability, standing wave, star graph, power nonlinearity, analytic perturbation.}
 \email{angulo@ime.usp.br}
 \email{nataliia@ime.usp.br}
\thanks{J. Angulo was supported partially by Grant CNPq/Brazil.  N. Goloshchapova  was supported by FAPESP under the project 2016/02060-9.}
\thanks{$^*$ Corresponding author: nataliia@ime.usp.br}
\begin{document}
\maketitle

% Enter the first author's name and address:
\centerline{\scshape Jaime Angulo Pava and Nataliia Goloshchapova$^*$}
\medskip
{\footnotesize
% please put the address of the first author
 \centerline{Rua do Mat\~{a}o, 1010}
   \centerline{Cidade Universit\'{a}ria, S\~{a}o Paulo - SP, 05508-090, Brazil}

} % Do not forget to end the {\footnotesize by the sign }

%\medskip
%
%\centerline{\scshape Nataliia Goloshchapova$^*$ }
%\medskip
%{\footnotesize
% % please put the address of the second  and third author
%  \centerline{Rua do Mat\~{a}o, 1010}
%   \centerline{Vila Universitaria, S\~{a}o Paulo - SP, 05508-090, Brazil}
%}

\bigskip

% The name of the associate editor will be entered by an editorial staff
% "Communicated by the associate editor name" is not needed for special issue.

%The abstract of your paper
\begin{abstract}
We study  the nonlinear Schr\"odinger equation (NLS) on a star graph $\mathcal{G}$. At the vertex an interaction occurs described by a boundary condition
of delta type with strength $\alpha\in \mathbb{R}$.
We investigate  the  orbital instability of the standing waves $e^{i\omega t}\mathbf{\Phi}(x)$ of the NLS-$\delta$ equation with attractive power  nonlinearity on $\mathcal{G}$ when the profile $\bb \Phi(x)$  has mixed structure (i.e. has  bumps and tails). In our approach  we essentially use the extension theory of symmetric operators by Krein - von Neumann, and the analytic perturbations theory, avoiding  the variational techniques  standard in the stability study.  We also prove the  orbital stability of the unique standing wave solution to the NLS-$\delta$ equation with repulsive nonlinearity.
\end{abstract}

%The title of your section 1
\section{Introduction}
Let $\mathcal{G}$ be a star graph,  i.e. $N$
half-lines joined at the vertex $\nu=0$. On  $\mathcal{G}$ we consider the   following nonlinear Schr\"odinger equation
\begin{equation}\label{NLS_graph_ger}
i\partial_t \mathbf{U}(t,x)+\partial_x^2\mathbf{U}(t,x) +\mu|\mathbf{U}(t,x)|^{p-1}\mathbf{U}(t,x)=0,
\end{equation}
where $\mathbf{U}(t,x)=(u_j(t,x))_{j=1}^N:\mathbb{R}\times \mathbb{R}_+\rightarrow \mathbb{C}^N$,\, $\mu=\pm 1$, \, $p>1$, and nonlinearity acts componentwise,  i.e. $(|\mathbf{U}|^{p-1}\mathbf{U})_j=|u_j|^{p-1}u_j$.

Practically, equation  \eqref{NLS_graph_ger} means that on each edge of the graph, i.e. on each half-line, we have
$$i\partial_t u_j(t,x)+\partial_x^2u_j(t,x) +\mu|u_j(t,x)|^{p-1}u_j(t,x)=0,\,\,x>0,\,\,j\in\{1,...,N\}.$$

 A complete description of this model requires smoothness conditions along the edges and some junction conditions at the vertex $\nu=0$.  The family of self-adjoint conditions naturally arising  at the vertex $\nu=0$ of the star graph $\mathcal{G}$  has the following description
\begin{equation}\label{s-a_cond}
(U-I)\mathbf{U}(t,0)+i(U+I)\mathbf{U'}(t,0)=0,
\end{equation}
where $\mathbf{U}(t,0)=(u_j(t,0))_{j=1}^N$, $\mathbf{U'}(t,0)=(u'_j(t,0))_{j=1}^N$,  $U$ is  an arbitrary unitary $N\times N$ matrix, and $I$ is the $N\times N$ identity matrix.
Conditions \eqref{s-a_cond} at $\nu=0$ define the $N^2$-parametric family of self-adjoint extensions  of the closable  symmetric operator \cite[Chapter 17]{BlaExn08}
$$\mathbf{H}_0=\bigoplus\limits_{j=1}^N\frac{-d^2}{dx^2},\quad\dom(\mathbf{H}_0)=\bigoplus\limits_{j=1}^N C_0^\infty(\mathbb{R}_+).$$

In this paper we consider the  matrix $U$ which corresponds to the so-called $\delta$-interac\-tion  at vertex $\nu=0$. More precisely, the matrix
$$
U=\frac{2}{N+i\alpha}\mathcal{I}-I,\qquad \alpha\in \mathbb{R}\setminus\{0\},
$$
 where $\mathcal{I}$ is the $N\times N$ matrix whose all entries
equal 1, induces the following nonlinear Schr\"odinger equation with $\delta$-interaction (NLS-$\delta$) on the star graph $\mathcal{G}$
\begin{equation}\label{NLS_graph}
i\partial_t \mathbf{U}-\mathbf{H}^\alpha_\delta\mathbf{U} +\mu|\mathbf{U}|^{p-1}\mathbf{U}=0.
\end{equation}
Here $\mathbf{H}^\alpha_\delta$ is the self-adjoint operator on $L^2(\mathcal{G})$ defined  for $\mathbf{V}=(v_j)_{j=1}^N$ by
\begin{equation}\label{D_alpha}
\begin{split}
(\mathbf{H}^\alpha_\delta \mathbf{V})(x)&=\left(-v_j''(x)\right)_{j=1}^N,\quad x> 0,\\
D_\alpha:=\dom(\mathbf{H}^\alpha_\delta)&=\left\{\mathbf{V}\in H^2(\mathcal{G}): v_1(0)=...=v_N(0),\,\,\sum\limits_{j=1}^N  v_j'(0)=\alpha v_1(0)\right\}.
\end{split}
\end{equation}
 Condition at $\nu=0$ can be considered  as an analog of  $\delta$-interaction condition  for the Schr\"odinger operator on
the line  (see \cite{AlbGes05}), which justifies the  name of the equation. The case $\alpha<0$ refers to the presence of the potential well at the vertex, and $\alpha>0$ means the presence of a potential barrier.   When
$\alpha = 0$, one arrives at the known Kirchhoff
condition which corresponds to the free flow.

 %Equation \eqref{NLS_graph_ger} models propagation through junctions in networks.

%In particular, models of BEC on graphs/networks is a topic of active research (see \cite{Mug15}).

It is worth noting  that the quantum graphs (metric graphs equipped with  a linear  Hamiltonian $\bb H$) have been a very developed subject in the last couple of decades. They give  simplified models in mathematics, physics, chemistry, and engineering, when one considers propagation of waves of various types through a quasi one-dimensional (e.g. meso- or nanoscale) system that looks like a thin neighborhood of a  graph (see \cite{BK, Exn08, K, Mug15, Post12} for details and references).  In particular, a metric graph appears as  the natural limit of thin tubular structure, when the radius of a tubular structure tends  to zero \cite{Post12}.

The nonlinear PDEs on graphs have been studied in the last ten years in the context of existence, stability, and propagation of solitary waves. For instance, in \cite{Noj14} the author provides an overview of some recent results and open problems for  NLS  on graphs. The analysis of the behavior of the NLS  equation on networks is currently growing subject due to its relative analytical  simplicity (the metric graph is essentially one-dimensional) and  various physical applications involving wave propagation in graph-like structures (see the references in  \cite{ CacFin17, Kai17, Noj14}). In particular, two main fields where NLS appears as a preferred model are nonlinear optics and Bose-Einstein condensates.

The main purpose  of this work is the investigation of the  stability properties of the standing wave solutions
$$
\mathbf{U}(t,x)=e^{i\omega t}\mathbf{\Phi}(x)=\left(e^{i\omega t}\varphi_{j}(x)\right)_{j=1}^N
$$
to NLS-$\delta$ equation   \eqref{NLS_graph}.
In a series of papers    Adami,  Cacciapuoti,  Finco, and Noja (see \cite{AdaNoj15} and references therein) investigated variational and stability  properties of standing wave solutions  to equation   \eqref{NLS_graph} for $\mu=1$ (\textit{attractive nonlinearity}). In \cite{AdaNoj14}  it was shown that all possible profiles $\bb \Phi(x)$ belong to the specific family of $\left[\frac{N-1}{2}\right]+1$  vector functions (see Theorem \ref{1bump} below) consisting of bumps and tails.    It was  proved that there exists a global minimizer of the constrained  NLS action  for  $-N\sqrt{\omega}<\alpha<\alpha^*<0$. This minimizer coincides with the $N$-tails stationary state symmetric under permutation of edges, which consists of decaying tails (notice also that this profile  minimizes NLS energy under fixed mass constraint for sufficiently small mass \cite{AdaNoj14a}).

 Using minimization property,  the authors proved the orbital  stability of this  $N$-tails stationary state  in the case  $-N\sqrt{\omega}<\alpha<\alpha^*<0$.

In \cite{AdaNoj15} it was shown that although the constrained minimization problem does not admit global minimizers for large mass, the $N$-tails stationary state is still a local minimizer of the constrained energy which induces the 
 orbital stability for any $-N\sqrt{\omega}<\alpha<0$.
The orbital stability of $N$-tails (bumps) profile was studied in   \cite{AngGol16} in the framework of the extension theory. In particular, it was proved that $N$-bumps profile $\mathbf{\Phi}^\alpha_0$ (for $\alpha>0$) is   orbitally unstable in  $\mathcal{E}$ for $1<p\leq 3,\, \omega>\tfrac{\alpha^2}{N^2}$,  and   $3<p<5,\, \omega>\omega_0> \tfrac{\alpha^2}{N^2}$ (see Theorem 1.1 in \cite{AngGol16}). Moreover, in \cite{AngGol16}  we considered the NLS equation with $\delta'$-interaction.

In the case $\alpha<0$ it was shown in \cite{AdaNoj14} that the NLS action functional grows when the number of tails in the stationary state increases, i.e. one can call the rest of the profiles (except $N$-tails stationary state)  \textit{excited stationary states } (see Subsection \ref{exist}).
%Nothing was known up to now about stability properties of the excited states.
 This is a subject of special interest because there are only few cases where excited states of NLS equations are explicitly known.

In the present paper we provide sufficient condition for the orbital instability of the excited states of \eqref{NLS_graph}.   Moreover, we  obtain the novel result on the orbital stability/instability of the standing waves in the case $\alpha>0$.
 \begin{theorem}\label{main}
 Let $\alpha \neq 0$, \,$\mu=1$,\, $k\in\left\{1,...,\left[\tfrac{N-1}{2}\right]\right\}$, and  $ \omega>\tfrac{\alpha^2}{(N-2k)^2}$. Let also  the profile $\mathbf{\Phi}_{k}^\alpha$ be  defined by   \eqref{Phi_k}, and the spaces $\EE$, $\EE_k$ be defined in notation section.% by
% $$\EE=\{\mathbf{V}=(v_j)_{j=1}^N\in H^1(\mathcal{G}): v_1(0)=...=v_N(0)\}.$$
  Then the following assertions hold.
 \begin{enumerate}
\item[(i)] Let $\alpha<0$, then
  \begin{enumerate}
\item[1)] for  $1<p\leq 5$ the standing wave $e^{i\omega t}\mathbf{\Phi}_{k}^\alpha$ is orbitally unstable in $\mathcal{E}$;
\item[2)] for $p>5$ there exists  $\omega_k^*>\tfrac{\alpha^2}{(N-2k)^2}$ such that  the standing wave $e^{i\omega t}\mathbf{\Phi}_{k}^\alpha$ is orbitally unstable in $\mathcal{E}$ as $\omega\in (\tfrac{\alpha^2}{(N-2k)^2}, \omega_k^*)$.
\end{enumerate}
\item[(ii)] Let $\alpha>0$, then
\begin{enumerate}
\item[1)] for $1<p\leq 3$ the standing wave $e^{i\omega t}\mathbf{\Phi}_{k}^\alpha$ is orbitally stable in $\mathcal{E}_k$; %\textcolor{red}{ a) and it is spectrally unstable in $\EE$ by Masahito approach since we always have at least 2 bumps }
\item[2)] for $3<p<5$  there exists $\hat\omega_k>\tfrac{\alpha^2}{(N-2k)^2}$ such that  the standing wave $e^{i\omega t}\mathbf{\Phi}_{k}^\alpha$ is orbitally unstable in $\mathcal{E}$ as $\omega\in (\tfrac{\alpha^2}{(N-2k)^2}, \hat\omega_k)$, and  $e^{i\omega t}\mathbf{\Phi}_{k}^\alpha$ is orbitally stable in $\mathcal{E}_k$ as $\omega\in (\hat\omega_k,\infty)$; %\textcolor{red}{ b) and it is spectrally unstable in $\EE$ by Masahito approach}
\item[3)] for $p\geq 5$ the standing wave $e^{i\omega t}\mathbf{\Phi}_{k}^\alpha$ is orbitally unstable in $\mathcal{E}$.
\end{enumerate}
\end{enumerate}
\end{theorem}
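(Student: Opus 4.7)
The plan is to invoke the abstract stability/instability theory of Grillakis--Shatah--Strauss. The standing wave $e^{i\omega t}\mathbf{\Phi}_{k}^\alpha$ is a critical point of the action $S_\omega=E+\omega Q$, where $E(\mathbf{U})=\frac{1}{2}\langle \mathbf{H}^\alpha_\delta\mathbf{U},\mathbf{U}\rangle_{L^2(\mathcal{G})}-\frac{1}{p+1}\|\mathbf{U}\|_{L^{p+1}(\mathcal{G})}^{p+1}$ and $Q(\mathbf{U})=\frac{1}{2}\|\mathbf{U}\|_{L^2(\mathcal{G})}^2$. Writing perturbations as $\mathbf{U}=e^{i\omega t}(\mathbf{\Phi}_{k}^\alpha+\mathbf{v}+i\mathbf{w})$ yields the self-adjoint operators
\[
L_+^k=\mathbf{H}^\alpha_\delta+\omega-p|\mathbf{\Phi}_{k}^\alpha|^{p-1},\qquad L_-^k=\mathbf{H}^\alpha_\delta+\omega-|\mathbf{\Phi}_{k}^\alpha|^{p-1},
\]
both with domain $D_\alpha$. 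The orbital behaviour in $\mathcal{E}$ or $\mathcal{E}_k$ is then controlled, through the GSS criterion, by the Morse indices $n(L^k_\pm)$ restricted to the relevant function space together with the sign of $d''(\omega)$, where $d(\omega):=S_\omega(\mathbf{\Phi}_{k}^\alpha)$ and $d''(\omega)=\tfrac{d}{d\omega}\|\mathbf{\Phi}_{k}^\alpha\|_{L^2(\mathcal{G})}^2$.

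I would first dispose of the easier pieces. Since $L_-^k\mathbf{\Phi}_{k}^\alpha=0$ by the Euler--Lagrange equation and each component $\varphi_{j}$ of the profile is non-vanishing on $(0,\infty)$, a Sturmian oscillation argument on every half-edge, combined with the minimality of the $\delta$-quadratic form at its principal eigenvalue, delivers $\ker L_-^k=\Span\{\mathbf{\Phi}_{k}^\alpha\}$ and $n(L_-^k)=0$. The sign of $d''(\omega)$ is obtained by an explicit computation: rescaling through $\sqrt{\omega}$ reduces $\|\mathbf{\Phi}_{k}^\alpha\|_{L^2(\mathcal{G})}^2$ to an incomplete Beta integral depending only on $\alpha/[(N-2k)\sqrt{\omega}]$ and $p$, whose monotonicity profile in $\omega$ isolates the thresholds $\omega_k^*$ (for $\alpha<0$, $p>5$) and $\hat\omega_k$ (for $\alpha>0$, $3<p<5$) appearing in the statement.

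The core of the argument is the negative-eigenvalue count for $L_+^k$, and here the Krein--von Neumann extension theory and analytic perturbation theory are essential. I would realise $L_+^k$ as a one-parameter family (in $\alpha$) of self-adjoint extensions of the closable symmetric operator $\bigoplus_{j=1}^N \ell_{\omega,j}|_{C_0^\infty(\mathbb{R}_+)}$, where $\ell_{\omega,j}=-\partial_x^2+\omega-p|\varphi_{j}|^{p-1}$ and each $\varphi_{j}$ is, according to its edge type, either the tail of the whole-line soliton $\phi_\omega$ or the reflection of $\phi_\omega$ across an interior maximum. The Morse index of each $\ell_{\omega,j}$ is classical, while the Krein resolvent formula expresses the resolvent of $L_+^k$ as a finite-rank modification of the direct sum driven by the $\delta$-coupling, so that Kato's analytic perturbation in $\alpha$ tracks the zero-crossings of the eigenvalues. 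Starting either from the Kirchhoff point $\alpha=0$ (which was essentially handled in \cite{AngGol16}) or from the bifurcation threshold $\omega\downarrow \alpha^2/(N-2k)^2$, at which zero becomes an eigenvalue, one obtains $n(L_+^k|_{\mathcal{E}})\geq 2$ throughout all regimes where instability is claimed. Passage to the symmetric subspace $\mathcal{E}_k$ then removes precisely those negative eigenmodes that are antisymmetric under permutation of the $k$ (respectively $N-k$) equivalent edges, dropping $n(L_+^k|_{\mathcal{E}_k})$ to $1$ in the regimes where stability is claimed.

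Combining the three ingredients through the GSS criterion yields all six cases: stability whenever $n(L_+^k)=1$ on the relevant subspace and $d''(\omega)>0$, instability whenever $n(L_+^k)\geq 2$ or when the parity of $n(L_+^k)$ disagrees with the sign of $d''(\omega)$. The principal obstacle I anticipate is the sharp Morse-index count for $L_+^k$: at the endpoint $\omega=\alpha^2/(N-2k)^2$ zero is an eigenvalue of $L_+^k$, so zero-crossings of eigenvalues are tangential, and the analytic perturbation must be supplemented by a Puiseux-type expansion together with a careful identification of the direction of motion of the crossing eigenvalue through the Krein resolvent formula, particularly since the mixed bump--tail structure of $\mathbf{\Phi}_{k}^\alpha$ prevents any appeal to positivity-based (ground-state) arguments.
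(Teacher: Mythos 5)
Your overall architecture coincides with the paper's: GSS/Ohta criterion, the operators $L_\pm^k$, the slope computation reducing to an incomplete Beta integral, restriction to the symmetric subspace, and analytic perturbation in $\alpha$ from the Kirchhoff operator. However, there are concrete gaps. First, your closing criterion ``instability whenever $n(L_+^k)\geq 2$'' is not a valid implication: the GSS theory gives (spectral) instability only when $n(\bb H)-p(\omega)$ is \emph{odd}, and the orbital-instability input used in the paper for $\alpha<0$ is Ohta's result, which requires $n=2$ together with $p(\omega)=1$ exactly. This is why the paper works in $L^2_k(\mathcal G)$ throughout and proves $n=2$ there for $\alpha<0$ and $n=1$ for $\alpha>0$ (independently of $p$ and $\omega$, contrary to your suggestion that the count drops to $1$ ``in the regimes where stability is claimed''); on the full space the paper only proves the bound $n\leq k+1$ (resp.\ $N-k$), which for $k\geq 2$ is too large for any instability criterion, so your assertion $n(L_+^k|_{\mathcal E})\geq 2$ would not suffice even if established. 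Relatedly, you never address the passage from spectral to orbital instability in the cases where $n-p(\omega)=1$ with $n=1$ (the $\alpha>0$ instability regimes); the paper handles this via the $C^2$ regularity of the data-to-solution map and the Henry--Perez--Wreszinski argument, valid since those regimes have $p>3$.

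Second, the crux of the Morse-index count --- the \emph{direction} in which the relevant eigenvalue leaves zero --- is deferred rather than carried out. The paper perturbs at $\alpha=0$ for fixed $\omega$, where the Kirchhoff operator restricted to $L^2_k(\mathcal G)$ has a one-dimensional kernel spanned by an explicit vector $\widetilde{\bb\Phi}_{0,k}$ built from $\varphi_0'$, computes the first-order coefficient $\lambda_k'(0)$ by evaluating $(\bb L^\alpha_{1,k}\bb F_k(\alpha),\widetilde{\bb\Phi}_{0,k})_2$ in two ways, and finds $\lambda_k'(0)>0$ from the sign of $\int_0^\infty(\varphi_0')^3\varphi_0^{p-2}\,dx$; a Riesz-projection continuation (using that the kernel is trivial for all $\alpha\neq 0$) then propagates the count to all $\alpha$. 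Your proposed alternative base point $\omega\downarrow\alpha^2/(N-2k)^2$, where zero becomes an eigenvalue of $L_+^k$ itself, is not developed and would require the Puiseux-type analysis you yourself flag as the principal obstacle; without that computation the central quantitative claim of the theorem is unproven.
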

%In the case  of $p>5$,  $\alpha<0$, and $\omega>\omega_k^*$ our approach  does not provide any  information about %the stability of the excited states $\mathbf{\Phi}_{k}^\alpha$ (see Remark \ref{p} below).
Recently similar results were obtained in \cite[Theorem 3.2]{Kai17}. In particular, the author proved the spectral instability of  $\mathbf{\Phi}_{k}^\alpha$ in the cases $\alpha<0$,\, $k\geq 1$ and $\alpha>0,\, k\geq 0$. His method essentially uses the generalization of the Sturm theory for
the Schr\"odinger operators on the star graph.

 As it was noted above, the Kirchhoff  condition on $\mathcal{G}$ corresponds to $\alpha=0$ in \eqref{D_alpha}.
 In \cite[Theorem 5]{AdaNoj14} it was shown that  for $N$ even  there exists one-parametric family of soliton profiles  given by $\bb \Phi_a(x)=(\varphi_{a,j}(x))_{j=1}^N,$ where
 \begin{equation*}
 \varphi_{a,j}(x)= \left\{
                    \begin{array}{ll}
                      \varphi_0(x-a), & \quad\hbox{$j=1,...,N/2$;} \\
                       \varphi_0(x+a), & \quad\hbox{$j=N/2+1,...,N,$}
                    \end{array}
                  \right.\quad a\in\mathbb{R},
 \end{equation*}
 while for $N$   odd the unique profile is given by $\bb \Phi_0(x)=(\varphi_0(x))_{j=1}^N$, with $\varphi_0(x)$ defined by \eqref{var_0}. In particular, $\bb \Phi_0(x)$ is the stationary state  for any $N\geq 2$.
In \cite{KaiPel17a} the authors considered the spectral instability of the family $\bb \Phi_a(x)$ for more general setting (for the generalized Kirchhoff condition), while in \cite{KaiPel17b} they studied orbital instability of $\bb \Phi_0(x)$. Namely, the authors proved in \cite[Theorem 2.6]{KaiPel17b}  that  for $2\leq p< 5$ the standing wave $e^{i\omega t}\bb \Phi_0(x)$  is orbitally unstable in $\EE$.
We  complement this result by the following theorem.
 \begin{theorem}\label{Kirsh}
Let $\omega>0$, then
\begin{itemize}
\item[$(i)$] for $1<p<5$  the standing wave $e^{i\omega t}\bb \Phi_0(x)$ is orbitally stable in $\EE_{\eq}$;
\item[$(ii)$]  for $p>5$  the standing wave $e^{i\omega t}\bb \Phi_0(x)$ is orbitally unstable in $\EE$.
\end{itemize}
\end{theorem}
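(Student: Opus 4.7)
\textbf{Proof proposal for Theorem \ref{Kirsh}.}

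The plan is to reduce both statements to the classical scalar NLS on the half-line with Neumann boundary condition by exploiting the permutation symmetry of the Kirchhoff vertex and the invariance of the \emph{symmetric} subspace $\EE_{\eq}$ (equal components on every edge) under the flow. The key observation is that for $\alpha=0$ the junction condition $\sum_{j} v_j'(0)=0$ restricted to vectors $\bb V=(u,\ldots,u)$ reduces to $u'(0)=0$, so $\EE_{\eq}$ is invariant under \eqref{NLS_graph} and the restricted dynamics is exactly
\begin{equation*}
i\partial_t u+\partial_x^2 u+|u|^{p-1}u=0,\quad x>0,\qquad u'(t,0)=0,
\end{equation*}
whose standing wave $e^{i\omega t}\varphi_0|_{\mathbb{R}_+}$ corresponds, via even extension, to the standard NLS ground state on the full line.

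For part $(i)$, I would apply the Grillakis--Shatah--Strauss (GSS) stability theorem to this reduced scalar problem. The relevant Lyapunov functional is $S_\omega=E+\omega Q$, and a direct scaling computation gives
\begin{equation*}
d(\omega)=S_\omega(\bb\Phi_0),\qquad d'(\omega)=Q(\bb\Phi_0)\,\propto\,\omega^{(5-p)/(2(p-1))},
\end{equation*}
so that $d''(\omega)>0$ precisely for $1<p<5$. The spectral hypotheses for GSS on $\EE_{\eq}$ are verified by lifting to the full line via even reflection: the restriction of $L_+$ has exactly one negative eigenvalue, and $\ker L_-\big|_{\EE_{\eq}}=\Span\{\bb\Phi_0\}$ is one-dimensional from gauge invariance. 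Combined with $d''(\omega)>0$ this yields orbital stability in $\EE_{\eq}$.

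For part $(ii)$, the same reduction is used, but now $p>5$ gives $d''(\omega)<0$, and the scalar GSS instability criterion produces orbital instability of $e^{i\omega t}\varphi_0$ on $\mathbb{R}_+$ with Neumann boundary condition. Because $\EE_{\eq}$ is a closed, flow-invariant subspace of $\EE$ whose norm is the restriction of the $\EE$-norm, any destabilizing sequence of initial data chosen inside $\EE_{\eq}$ remains in $\EE_{\eq}$ along the flow and exits every fixed neighborhood of the orbit $\{e^{i\theta}\bb\Phi_0\}$, so orbital instability transfers to $\EE$ at no cost.

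The main obstacle I anticipate is not the dynamical stability/instability itself (which becomes standard once the reduction is in place) but rather the bookkeeping around the extra $(N-1)$-dimensional kernel of $L_+$ on the full space $\EE$: antisymmetric combinations built out of $\varphi_0'|_{\mathbb{R}_+}$ lie in $\ker L_+$ without corresponding to any symmetry of \eqref{NLS_graph}. These modes are orthogonal to $\EE_{\eq}$ and are precisely what drives the instability in the range $2\leq p<5$ established in \cite{KaiPel17b}; restricting to $\EE_{\eq}$ in $(i)$ eliminates them, and the invariant-subspace argument in $(ii)$ bypasses them entirely, which is exactly why one obtains genuinely stronger stability in $\EE_{\eq}$ while still recovering instability in $\EE$ for the supercritical range.
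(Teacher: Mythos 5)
Your argument is correct and reaches the same conclusions, but it verifies the key hypotheses by a genuinely different (and more elementary) route than the paper. The paper never leaves the graph: it restricts the matrix operators $\bb L^0_1,\bb L^0_2$ to $L^2_{\eq}(\mathcal{G})$, obtains $n(\bb L^0_1|_{L^2_{\eq}(\mathcal{G})})=1$ from the deficiency-index computation $n_\pm(\bb L_0^0)=1$ of Theorem \ref{spect_L^0_1} together with Proposition \ref{semibounded} and the test function $\bb\Phi_0$, checks $\ker(\bb L^0_1|_{L^2_{\eq}(\mathcal{G})})=\{\bb 0\}$ and $\bb L^0_2\geq 0$ directly, reads off the sign of $\partial_\omega\|\bb\Phi_0\|_2^2$ from \eqref{J} with $k=0$, $\alpha=0$, and then invokes Theorem \ref{stabil_graph}; for $p>5$ the step from spectral to \emph{orbital} instability is justified there via the $C^2$ regularity of the data-to-solution map and the linearization argument of \cite{HenPer82} (Remark \ref{nonstab}$(iii)$). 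You instead identify $\EE_{\eq}$ with the Neumann half-line problem and lift by even reflection to the whole-line soliton, where the spectral facts are classical: the ground state of $L_+$ is even (one negative eigenvalue survives the restriction), $\varphi_0'$ is odd (so $\ker L_+$ is trivial on the even subspace), and $\ker L_-=\Span\{\varphi_0\}$; this bypasses the von Neumann extension machinery entirely, and since $n(L_+)=1$ in the reduced problem, the original Grillakis--Shatah--Strauss \emph{orbital} instability theorem applies directly when $d''(\omega)<0$, so you also avoid the $C^2$-flow detour. Your slope exponent $\omega^{(5-p)/(2(p-1))}$ and the transfer of instability from the closed invariant subspace to $\EE$ agree with the paper. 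Two small points are worth making explicit: the invariance of $\EE_{\eq}$ under the Kirchhoff flow follows from uniqueness in Theorem \ref{loc_well_posed} plus permutation symmetry, exactly as in Lemma \ref{persistence_k}; and for data in $\EE_{\eq}$ the distance to the orbit $\{e^{i\theta}\bb\Phi_0\}$ measured in $\EE$ coincides with the one measured in $\EE_{\eq}$, which is precisely why the transfer in part $(ii)$ is cost-free.
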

%\noindent

\noindent The instability part was  announced  in \cite[Remark 2.8]{KaiPel17b} without proof.

In Section \ref{repuls} we  consider  model \eqref{NLS_graph} with $\mu=-1$ (\textit{repulsive nonlinearity}).  We prove the following new result on the orbital stability of the unique ($N$-tails) stationary state  $\bb \Phi_\alpha=(\varphi_{\alpha})_{j=1}^N$, where
 \begin{equation}\label{1varphi_rep}
 \varphi_\alpha(x)=\left[\frac{(p+1)\omega}{2} \csch^2\left(\frac{(p-1)\sqrt{\omega}}{2}x+\coth^{-1}\left(\frac{-\alpha}{N\sqrt{\omega}}\right)\right)\right]^{\frac{1}{p-1}},\qquad x>0,
 \end{equation}
with $\alpha<0$  and $0<\omega< \tfrac{\alpha^2}{N^2}$. More exactly, we prove
  \begin{theorem}\label{main_rep}
 Let $\alpha<0$,\, $0<\omega< \tfrac{\alpha^2}{N^2}$, and $\bb \Phi_\alpha$ be defined by \eqref{1varphi_rep}. Then the standing wave  $e^{i\omega t}\bb \Phi_\alpha$ is orbitally stable in $\EE$.
 \end{theorem}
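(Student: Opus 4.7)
\medskip
\noindent\textbf{Proof proposal.} The plan is to apply the Grillakis--Shatah--Strauss framework in the regime where the Hessian of the action has vanishing Morse index, so that orbital stability follows from coercivity modulo the $S^1$-gauge symmetry, and in particular no Vakhitov--Kolokolov slope condition is needed. With $\mu=-1$ the conserved energy reads
\[
E(\bb U) = \tfrac12\langle \bb H^\alpha_\delta \bb U, \bb U\rangle + \tfrac{1}{p+1}\|\bb U\|^{p+1}_{L^{p+1}(\mathcal G)},
\]
and the charge is $Q(\bb U) = \tfrac12\|\bb U\|^2_{L^2(\mathcal G)}$; one checks directly from \eqref{1varphi_rep} that $\bb\Phi_\alpha\in D_\alpha$ solves $\bb H^\alpha_\delta \bb\Phi_\alpha + \omega\bb\Phi_\alpha + |\bb\Phi_\alpha|^{p-1}\bb\Phi_\alpha = 0$, so it is a critical point of $S_\omega:=E + \omega Q$; well-posedness of \eqref{NLS_graph} in $\EE$ for $\mu=-1$ can be established along the same lines as in the attractive case.

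Next I would compute the Hessian $S_\omega''(\bb\Phi_\alpha)$, which on real/imaginary perturbations splits as $L_+\oplus L_-$ with
\[
L_+ = \bb H^\alpha_\delta + \omega + p|\bb\Phi_\alpha|^{p-1}, \qquad L_- = \bb H^\alpha_\delta + \omega + |\bb\Phi_\alpha|^{p-1},
\]
self-adjoint on $D_\alpha$ (the multiplicative perturbation is bounded since $\varphi_\alpha$ is bounded on $(0,\infty)$), and $L_-\bb\Phi_\alpha = 0$ by the profile equation. To analyse the spectra I would exploit the permutation symmetry of $\bb\Phi_\alpha$: write $L^2(\mathcal G)=\mathcal S\oplus\mathcal A$, where $\mathcal S=\{\bb U\in L^2(\mathcal G):u_1=\cdots=u_N\}$ and $\mathcal A$ is its componentwise zero-sum orthogonal complement; both are invariant under $L_\pm$, and the $\delta$-boundary condition forces Dirichlet data on $\mathcal A$. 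On $\mathcal S$ the operators reduce to half-line Schr\"odinger operators on $L^2(\mathbb R_+)$ with Robin condition $u'(0)=(\alpha/N)u(0)$, while on $\mathcal A$ they become $(N-1)$ copies of the analogous Dirichlet half-line operators.

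The spectral analysis then splits cleanly. On $\mathcal A$, both restrictions are Dirichlet half-line Schr\"odinger operators with positive potential $\omega+c\,\varphi_\alpha^{p-1}\ge\omega$ ($c\in\{1,p\}$), so each is bounded below by $\omega>0$. On $\mathcal S$ I would invoke Sturm oscillation / Perron--Frobenius for the half-line Robin problem: $\varphi_\alpha>0$ on $(0,\infty)$ satisfies the Robin BC and lies in $\ker L_-|_{\mathcal S}$, hence is the nodeless ground state, giving $L_-|_{\mathcal S}\ge 0$ with simple kernel $\mathrm{span}(\varphi_\alpha)$. Since $L_+|_{\mathcal S}=L_-|_{\mathcal S}+(p-1)\varphi_\alpha^{p-1}$ with a strictly positive pointwise perturbation, a quadratic-form computation yields $\langle L_+|_{\mathcal S}f,f\rangle>0$ for every $f\ne 0$; together with $\sigma_{\mathrm{ess}}(L_+|_{\mathcal S})=[\omega,\infty)$ this gives $\inf\sigma(L_+|_{\mathcal S})>0$.

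Combining these facts, $L_+$ is strictly positive on $L^2(\mathcal G)$ and $\ker L_-=\mathrm{span}(\bb\Phi_\alpha)$, which coincides with the tangent direction of the gauge orbit $\{e^{i\theta}\bb\Phi_\alpha\}_\theta$. Hence $S_\omega$ is strictly $H^1$-coercive (via a G{\aa}rding-type inequality using the boundedness of the potential) in a neighbourhood of $\bb\Phi_\alpha$ modulo this orbit, and the conservation of $E$ and $Q$ then yields orbital stability of $e^{i\omega t}\bb\Phi_\alpha$ in $\EE$ by the standard Lyapunov argument---the vanishing Morse index of $L_+$ places us in the direct-minimisation regime of GSS, which is precisely why no slope condition on $\|\bb\Phi_\alpha\|^2_{L^2}$ appears in the hypotheses. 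The main technical obstacle I foresee is the half-line Perron--Frobenius/Sturm step on $\mathcal S$ with the attractive Robin boundary condition, ensuring that the positive eigenfunction $\varphi_\alpha$ in the form domain genuinely characterises the lowest eigenvalue; once that oscillation-theoretic input is secured, the rest of the argument is a clean assembly that avoids any variational characterisation of $\bb\Phi_\alpha$.
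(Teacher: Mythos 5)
Your proposal is correct and lands in the same overall framework as the paper (zero Morse index of the Hessian, $\ker L_-$ equal to the gauge direction, Lyapunov functional $E+\omega Q$, no slope condition needed), but the key spectral step is carried out by a genuinely different argument. The paper never decomposes $L^2(\mathcal G)$ into symmetric and zero-sum sectors: it proves $\mathbf{L}^\alpha_{2,\rep}\ge 0$ with $\ker(\mathbf{L}^\alpha_{2,\rep})=\Span\{\bb \Phi_\alpha\}$ in one stroke on the whole graph via the ground-state (Jacobi) factorization
\[
-v_j''+\omega v_j+\varphi_{\alpha}^{p-1}v_j=\frac{-1}{\varphi_{\alpha}}\frac{d}{dx}\Big[\varphi_{\alpha}^2\frac{d}{dx}\Big(\frac{v_j}{\varphi_{\alpha}}\Big)\Big],
\]
integrating by parts and checking that the vertex boundary terms cancel precisely because $\varphi_\alpha$ itself satisfies the $\delta$-conditions; strict positivity of $\mathbf{L}^\alpha_{1,\rep}$ then follows at once from $(\mathbf{L}^\alpha_{1,\rep}\bb V,\bb V)_2>(\mathbf{L}^\alpha_{2,\rep}\bb V,\bb V)_2$, and Weyl's theorem supplies $\sigma_{\ess}=[\omega,\infty)$ and the gap. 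Your route instead isolates the only dangerous sector --- the symmetric one, where the operator reduces to a half-line Schr\"odinger operator with the attractive Robin condition $u'(0)=(\alpha/N)u(0)$, whose Robin Laplacian alone already has the negative eigenvalue $-\alpha^2/N^2<-\omega$ --- and disposes of it by Perron--Frobenius/Sturm theory applied to the positive eigenfunction $\varphi_\alpha$; the zero-sum sector is trivially $\ge\omega$ because it carries Dirichlet data and the potential perturbation is positive. Your oscillation-theoretic input is standard and correct, but it is exactly the external ingredient the factorization identity lets the paper avoid: the paper's computation is self-contained and does not require knowing that a positive Robin eigenfunction is the ground state. Your decomposition does buy a clearer picture of where the instability would have to come from (and why it does not), so both arguments are legitimate; the remaining ingredients of your write-up (well-posedness for $\mu=-1$, identification of $L_\pm$ with $\mathbf{L}^\alpha_{1,\rep}$ and $\mathbf{L}^\alpha_{2,\rep}$, and the concluding coercivity/Lyapunov step) coincide with the paper's.
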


 Our approach contains new original technique. It  does not use variational analysis, and it is based on  the extension theory of symmetric operators, the  analytic perturbations theory,   Grillakis-Shatah-Strauss and Ohta approach (see \cite{GrilSha87, GrilSha90, oh}).

%\vskip0.2in

\begin{notation}
Let $A$ be a densely defined symmetric operator in a Hilbert space $\mathcal{H}$. The domain of $A$ is denoted by $\dom(A)$. The \textit{deficiency subspaces} and the \textit{deficiency  numbers} of $A$ are denoted by $\mathcal{N}_{\pm}(A):=\ker(A^*\mp iI)$ and  $n_\pm(A):=\dim \mathcal{N}_{\pm}(A)$ respectively. The number of negative eigenvalues counting multiplicities is denoted by  $n(A)$ (\textit{the Morse index}). The spectrum and the resolvent set of $A$ are denoted by $\sigma(A)$ and $\rho(A)$.

  We denote by  $\mathcal{G}$ the star graph constituted by $N$ half-lines attached to a common vertex $\nu=0$. On the graph we define
  \begin{equation*}
  L^p(\mathcal{G})=\bigoplus\limits_{j=1}^NL^p(\mathbb{R}_+),\,p>1,\quad H^1(\mathcal{G})=\bigoplus\limits_{j=1}^NH^1(\mathbb{R}_+),\quad H^2(\mathcal{G})=\bigoplus\limits_{j=1}^NH^2(\mathbb{R}_+).
\end{equation*}
For instance, the norm in $L^p(\mathcal{G})$ is defined by $$||\bb V||^p_{L^p(\mathcal{G})}=\sum\limits_{j=1}^N||v_j||^p_{L^p(\mathbb{R}_+)},\quad \mathbf{V}=(v_j)_{j=1}^N.$$
 By  $||\cdot||_p$ we denote  the norm in  $L^p(\mathcal{G})$, and  $(\cdot,\cdot)_2$   denotes the scalar product in  $L^2(\mathcal{G})$.

We also denote by $\EE$ and $L_k^2(\mathcal{G})$ the spaces
\begin{equation*}
\begin{split}
&\EE=\{\mathbf{V}=(v_j)_{j=1}^N\in H^1(\mathcal{G}): v_1(0)=...=v_N(0)\},
\\
&L_k^2(\mathcal{G})=\left\{\begin{array}{c}\bb V=(v_j)_{j=1}^N\in L^2(\mathcal{G}): v_1(x)=...=v_k(x),\\
v_{k+1}(x)=...=v_N(x),\, x\in\mathbb{R}_+
\end{array}\right\},
\end{split}
\end{equation*}
 and $\EE_k=\EE\cap L_k^2(\mathcal{G})$.
Finally,
 $$L^2_{\eq}(\mathcal{G})=\{\mathbf{V}=(v_j)_{j=1}^N\in L^2(\mathcal{G}): v_1(x)=...=v_{N}(x),\, x\in\mathbb{R}_+\},$$
 and $\EE_{\eq}=\EE\cap L^2_{\eq}(\mathcal{G})$.
 \end{notation}
 \iffalse
On $\mathcal{G}$ we define the following  weighted Hilbert spaces
 \begin{equation*}
 W^j(\mathcal{G})=\bigoplus\limits_{j=1}^NW^j(\mathbb{R}_+),\quad  W^j(\mathbb{R}_+)=\{f\in H^j(\mathbb{R}_+): x^jf\in L^2(\mathbb{R}_+)\}, \,\, j\in\{1,2\},
 \end{equation*}
 and $ W^j_k(\mathcal{G})= W^j(\mathcal{G})\cap L^2_k(\mathcal{G})$.
In particular,  $W^1_{\mathcal{E}}(\mathcal{G})=W^1(\mathcal{G})\cap \mathcal{E}(\mathcal{G})$, and $W^1_{\mathcal{E},k}(\mathcal{G})=W^1_{\mathcal{E}}(\mathcal{G})\cap L^2_k(\mathcal{G})$.%Finally,  lower index in  $\EE, H^j_\mathbb{R}%(\mathcal{G}), L^2_\mathbb{R}(\mathcal{G})$ means that we the functions considered are real valued.
 \fi

\section{Preliminaries}\label{prelim}
\subsection{Well-posedness}
The well posedness is the crucial assumption in the stability theory. In \cite{AdaNoj14} the problem of well-posedness of the NLS-$\delta$ equation has been studied in the case $\mu=1$. Recently  we have completed and extended  mentioned result (see \cite{AngGol16}). Below we recall our results, as well as generalize them on the case $\mu=-1$.

In \cite{AngGol16} (see Lemmas 3.1 and 3.3, and the proof of Theorem 3.4) we prove the following three technical lemmas.
\begin{lemma}\label{rela00} Let  $\{e^{-it\bb H_{\delta}^\alpha}\}_{t\in \mathbb R}$ be  the family of unitary operators  associated to  NLS-$\delta$ model \eqref{NLS_graph}. Then for every $ \bb V=(v_j)_{i=1}^N\in \EE$ we have
 \begin{equation*}\label{group_comut0}
\partial_x(e^{-it\bb H_{\delta}^\alpha }\bb V)=-e^{-it\bb H_{\delta}^\alpha}\bb V' +
\mathcal B(\bb V'),
\end{equation*}
where $\mathcal B(\bb V')=(2e^{it\partial^2_x}\tilde{v}_j)_{j=1}^N$, with $ \tilde{v}_j(x)=\left\{\begin{array}{c}
v'_j(x),\,\ x\geq 0,\\
0,\quad x<0
\end{array}\right.$, and $e^{it\partial^2_x}$ is the unitary group associated with the free Schr\"odinger operator on $\mathbb{R}$.
\end{lemma}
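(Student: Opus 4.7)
The plan is to exploit the natural orthogonal decomposition $L^2(\mathcal{G})=L^2_{\eq}(\mathcal{G})\oplus L^2_\perp(\mathcal{G})$, where $L^2_\perp(\mathcal{G})$ denotes the orthogonal complement of $L^2_{\eq}(\mathcal{G})$, and to verify the identity on each summand. Both subspaces are invariant under $\partial_x$ and under $\bb H_\delta^\alpha$: on $L^2_{\eq}$, writing $\bb V=(v,\ldots,v)$, the operator $\bb H_\delta^\alpha$ reduces to $-\partial_x^2$ on $L^2(\mathbb{R}_+)$ with Robin condition $v'(0)=\tfrac{\alpha}{N}v(0)$; on $L^2_\perp$, the vertex constraints $v_1(0)=\ldots=v_N(0)$ combined with $\sum_j v_j(0)=0$ force $v_j(0)=0$ for each $j$, so the restriction is a direct sum of Dirichlet half-line Laplacians. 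Both sides of the identity depend linearly and continuously on $\bb V\in\EE$ (since $e^{-it\bb H_\delta^\alpha}$ preserves $\EE$, $e^{it\partial_x^2}$ is unitary on $L^2(\mathbb{R})$, and the zero-extension is an isometry), so by density it is enough to treat sufficiently regular $\bb V$ and to verify the identity separately on its $\eq$ and $\perp$ parts.

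For the $L^2_\perp$ component, the Dirichlet semigroup $e^{-itL_D}$ admits the classical reflection representation $e^{-itL_D}v=(e^{it\partial_x^2}v^{\mathrm{odd}})|_{\mathbb{R}_+}$. Decomposing $v^{\mathrm{odd}}=\tilde v-R\tilde v$, where $R$ denotes the reflection $x\mapsto -x$ and $\tilde v$ the zero extension, and using the commutation $Re^{it\partial_x^2}=e^{it\partial_x^2}R$, a direct calculation shows on $\mathbb{R}_+$
$$
\partial_x(e^{-itL_D}v)=e^{it\partial_x^2}\widetilde{v'}+Re^{it\partial_x^2}\widetilde{v'},\qquad e^{-itL_D}v'=e^{it\partial_x^2}\widetilde{v'}-Re^{it\partial_x^2}\widetilde{v'}.
$$
Adding these gives $\partial_x(e^{-itL_D}v)=-e^{-itL_D}v'+2e^{it\partial_x^2}\widetilde{v'}$, which is exactly the claimed identity on $L^2_\perp$.

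For the $L^2_{\eq}$ component, no simple reflection formula is available for the Robin problem, and I would instead use Duhamel's formula on $\mathbb{R}$. For a classical Robin solution $u$ with $u'(t,0)=\tfrac{\alpha}{N}u(t,0)$, the jump relations at $x=0$ give that the zero-extension $\tilde u$ satisfies $i\partial_t\tilde u+\partial_x^2\tilde u=u'(t,0)\delta_0+u(t,0)\delta_0'$ in $\mathcal{D}'(\mathbb{R})$, so by Duhamel
$$
\tilde u(t)=e^{it\partial_x^2}\tilde v^{(0)}-i\int_0^t e^{i(t-s)\partial_x^2}\bigl[u'(s,0)\delta_0+u(s,0)\delta_0'\bigr]\,ds,
$$
where $\tilde v^{(0)}$ is the zero extension of $v$. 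Differentiating in $x$, invoking the Robin trace at each intermediate time, and rearranging, one must recognize the remaining boundary integral as $e^{it\partial_x^2}\widetilde{v'}-e^{-itL_R}v'$ on $\mathbb{R}_+$, which closes the identity. The hardest step is precisely this final identification: the Duhamel boundary integral has to be unfolded carefully and matched with the Robin semigroup applied to $v'$ via the $L^2$-extension of the unitary group (noting that $v'$ in general does not satisfy the Robin condition, so $e^{-itL_R}v'$ is understood through the bounded extension of the group).
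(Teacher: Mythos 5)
Your reduction to the two invariant subspaces is sound: $\bb H_\delta^\alpha$ does decompose into a Robin half-line operator ($v'(0)=\tfrac{\alpha}{N}v(0)$) on $L^2_{\eq}(\mathcal G)$ and the restriction of a direct sum of Dirichlet half-line operators on the orthogonal complement; both subspaces are preserved by $e^{-it\bb H_\delta^\alpha}$, by componentwise differentiation and by the zero-extension map; and your reflection computation on the Dirichlet block is complete and correct (the components of $\bb V_\perp$ vanish at the vertex, so no delta appears when you differentiate the zero extension). The problem is that the Dirichlet block is the easy half: all of the $\alpha$-dependence of the lemma lives in the Robin block, and there your argument stops exactly where the content of the lemma sits. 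You write the Duhamel formula for $\tilde u$ and then state that ``one must recognize the remaining boundary integral as $e^{it\partial_x^2}\widetilde{v'}-e^{-itL_R}v'$ on $\mathbb{R}_+$.'' That recognition \emph{is} the identity to be proved on $L^2_{\eq}$; asserting it is circular. Moreover the Duhamel route offers nothing to match it against: since $v'$ does not satisfy the Robin condition, $e^{-itL_R}v'$ (understood only as the bounded extension of the group) has no boundary-value characterization and hence no Duhamel representation of its own, so the integral $-i\int_0^t e^{i(t-s)\partial_x^2}\bigl[u_x(s,0)\delta_0+u(s,0)\delta_0'\bigr]ds$ --- which still contains the unknown traces $u(s,0)$, $u_x(s,0)$ at all intermediate times --- cannot be identified with the claimed expression without an independent explicit computation. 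As written, the Robin case is a genuine gap, not a routine verification.

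The paper closes precisely this point by a different device: it represents $e^{-it\bb H_\delta^\alpha}$ through the resolvent formula \eqref{group0}--\eqref{res0}, $(\bb R_{z}\bb V)_j(x)=\tilde{c}_je^{-z x}+\frac{1}{2z}\int_0^\infty v_j(y)e^{-|x-y|z}dy$. Differentiating in $x$ and integrating by parts in $y$ in the convolution term produces at once $-\bb R_z\bb V'$, the free resolvent of the zero extension $\tilde v_j'$ (whence the factor $2$ and the operator $\mathcal B$), and boundary terms at $y=0$ that cancel against $\partial_x(\tilde{c}_je^{-zx})$ via the system \eqref{c_tilde_delta}; the commutation relation then holds under the $\tau$-integral in \eqref{group0}. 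If you wish to keep your decomposition, the honest way to finish is to carry out the analogous resolvent (or explicit kernel) computation for the scalar Robin operator on the half-line, rather than relying on the Duhamel formula.
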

The proof is based on the following representation of the group $e^{-it\bb H_\delta^\alpha}$ for $\alpha>0$ (for the case $\alpha<0$ see \cite[Remark 3.2]{AngGol16})
\begin{equation}\label{group0}
  e^{-it\bb H_\delta^\alpha}\bb V(x)=\tfrac{i}{\pi}\int\limits_{-\infty}^\infty e^{-it\tau^2}\tau \bb R_{i\tau}\bb V(x)d\tau,
\end{equation}
where $\bb R_{z}\bb V=(\bb H_\delta^\alpha+z^2 I)^{-1}\bb V$ has the components
 \begin{equation}\label{res0}
(\bb R_{z}\bb V)_j(x)=\tilde{c}_je^{-z x}+\frac{1}{2z}\int\limits_0^\infty v_j(y)e^{-|x-y|z}dy.
 \end{equation}
 The coefficients $\tilde{c}_j$ are determined by  the system
 \begin{equation}\label{c_tilde_delta}
\left(\begin{array}{ccccc}
  1&-1&0&...&0\\
  0&1&-1&...&0\\
    \vdots &\vdots&\vdots&  &\vdots\\
  0&0&0&...&-1\\
  \tfrac{\alpha}{N}+z&\tfrac{\alpha}{N}+z&\tfrac{\alpha}{N}+z&...&\tfrac{\alpha}{N}+z
  \end{array}\right)\left(\begin{array}{ccccc}\tilde{c}_1\\
  \\
 \vdots \\
  \\
  \tilde{c}_N
  \end{array}
  \right)=-\frac{1}{z}\left(\begin{array}{ccccc}t_1(z)-t_2(z)\\
  \vdots\\
  \\
 t_{N-1}(z)-t_N(z) \\
  (\frac{\alpha}{N}-z)\sum\limits_{j=1}^Nt_j(z) \end{array}
  \right),
\end{equation}
where  $ t_j(z)=\frac12 \int\limits_0^\infty v_j(y)e^{-z y}dy$.
Another  two lemmas follow from  formulas \eqref{group0}-\eqref{c_tilde_delta}.
\begin{lemma}\label{persistence} The family of unitary operators $\{e^{-it\bb H^{\alpha}_\delta}\}_{t\in \mathbb R}$  preserves the space $\mathcal E$, i.e. for $\bb U_0\in \mathcal E$ we have $e^{-it\bb H_{\delta}^\alpha} \bb U_0\in  \mathcal E$.
\end{lemma}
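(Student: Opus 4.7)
The plan is to combine Lemma \ref{rela00} with a density/approximation argument. Since $\bb H_\delta^\alpha$ is self-adjoint on $L^2(\mathcal G)$ with domain $D_\alpha$, Stone's theorem guarantees that the generated unitary group leaves $D_\alpha$ invariant, i.e. $e^{-it\bb H_\delta^\alpha}(D_\alpha)=D_\alpha\subset\EE$. The problem therefore reduces to extending this invariance from $D_\alpha$ to $\EE$ by continuity in the $H^1(\mathcal G)$ topology.

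First, I would verify that $D_\alpha$ is dense in $\EE$ in the $H^1(\mathcal G)$-norm. Fix a concrete $\bb\Psi\in D_\alpha$ with $\psi_1(0)=\cdots=\psi_N(0)=1$, for instance $\psi_j(x)=(1+\gamma x)e^{-\beta x}$ with $\beta>0$ and $\gamma=\beta+\alpha/N$, which by direct computation satisfies the continuity condition and the $\delta$-condition at the vertex. Given $\bb U_0\in\EE$ with common vertex value $c:=u_{0,1}(0)$, write $\bb U_0=c\bb\Psi+\bb W$, where $w_j(0)=0$ for every $j$. Each $w_j$ belongs to $H^1_0(\mathbb R_+)$ and can be approximated in $H^1$-norm by $C_c^\infty(\mathbb R_+)$ functions; the corresponding componentwise approximants trivially lie in $D_\alpha$ (their traces and derivatives at $0$ all vanish), so after adding $c\bb\Psi$ one obtains the desired approximation of $\bb U_0$ inside $D_\alpha$.

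Second, I would propagate this $H^1$-convergence through the group using Lemma \ref{rela00}. Choose $\bb V_n\in D_\alpha$ with $\bb V_n\to\bb U_0$ in $H^1(\mathcal G)$ and apply the lemma to $\bb V_n-\bb U_0\in\EE$:
\[
\partial_x\bigl(e^{-it\bb H_\delta^\alpha}(\bb V_n-\bb U_0)\bigr)=-e^{-it\bb H_\delta^\alpha}(\bb V_n-\bb U_0)'+\mathcal B\bigl((\bb V_n-\bb U_0)'\bigr).
\]
The unitarity of $e^{-it\bb H_\delta^\alpha}$ on $L^2(\mathcal G)$ and the boundedness of $\mathcal B$ (a consequence of unitarity of $e^{it\partial_x^2}$ on $L^2(\mathbb R)$ followed by the contractive restriction to $\mathbb R_+$) drive the right-hand side to zero in $L^2(\mathcal G)$. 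Combined with the $L^2$-convergence $e^{-it\bb H_\delta^\alpha}\bb V_n\to e^{-it\bb H_\delta^\alpha}\bb U_0$, this gives convergence in $H^1(\mathcal G)$; in particular, $e^{-it\bb H_\delta^\alpha}\bb U_0\in H^1(\mathcal G)$.

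Finally, the trace map $H^1(\mathbb R_+)\to\mathbb C$, $u\mapsto u(0)$, is continuous, and each $e^{-it\bb H_\delta^\alpha}\bb V_n\in D_\alpha\subset\EE$ satisfies the vertex equalities $(e^{-it\bb H_\delta^\alpha}\bb V_n)_1(0)=\cdots=(e^{-it\bb H_\delta^\alpha}\bb V_n)_N(0)$. Passing to the $H^1$-limit preserves these equalities, so $e^{-it\bb H_\delta^\alpha}\bb U_0\in\EE$. The main anticipated obstacle is the density step: one must produce approximants respecting the nontrivial $\delta$-condition, but the explicit splitting into a \emph{boundary-value carrier} $\bb\Psi$ plus an $H^1_0$-remainder reduces the task to the classical density of $C_c^\infty(\mathbb R_+)$ in $H^1_0(\mathbb R_+)$, which is routine.
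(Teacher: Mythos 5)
Your proof is correct, but it takes a genuinely different route from the paper. The paper obtains Lemma \ref{persistence} directly from the explicit representation \eqref{group0}--\eqref{c_tilde_delta} of the unitary group: since the resolvent $\bb R_{i\tau}$ maps $L^2(\mathcal G)$ into $D_\alpha$, each $\bb R_{i\tau}\bb V$ has equal vertex traces, and the integral formula \eqref{group0} transfers this equality to $e^{-it\bb H_\delta^\alpha}\bb V$; the argument is a concrete computation tied to the $\delta$-resolvent. You instead argue softly: Stone's theorem gives invariance of $D_\alpha$, your explicit carrier $\bb\Psi$ (with $\gamma=\beta+\alpha/N$, which does satisfy both vertex conditions) plus the classical density of $C_c^\infty(\mathbb R_+)$ in $H^1_0(\mathbb R_+)$ gives density of $D_\alpha$ in $\EE$ for the $H^1(\mathcal G)$-norm, Lemma \ref{rela00} together with unitarity of $e^{-it\bb H_\delta^\alpha}$ and of $e^{it\partial_x^2}$ upgrades $L^2$-convergence of $e^{-it\bb H_\delta^\alpha}\bb V_n$ to $H^1(\mathcal G)$-convergence, and trace continuity closes the loop; there is no circularity, since Lemma \ref{rela00} is established independently of the persistence lemmas. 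What your approach buys is robustness and generality: it would work verbatim for any self-adjoint vertex condition whose operator domain is $H^1$-dense in the corresponding form-domain-type space, once a commutator identity like Lemma \ref{rela00} is available, whereas the paper's computation is shorter but specific to the $\delta$-coupling. The only mild redundancy is that the conclusion of Lemma \ref{rela00} applied directly to $\bb U_0$ already yields $e^{-it\bb H_\delta^\alpha}\bb U_0\in H^1(\mathcal G)$, so the approximation is really only needed for the vertex-trace equality.
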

\begin{lemma}\label{persistence_k} The  family of unitary operators $\{e^{-it\bb H^{\alpha}_\delta}\}_{t\in \mathbb R}$  preserves the space $\mathcal E_k$, i.e. for $\bb U_0\in \mathcal E_k$ we have $e^{-it\bb H_{\delta}^\alpha} \bb U_0\in  \mathcal E_k$.
\end{lemma}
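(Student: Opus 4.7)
The plan is to mirror the argument that established Lemma \ref{persistence}, using the same explicit representation \eqref{group0}--\eqref{c_tilde_delta} but tracking the extra block symmetry that defines $L_k^2(\mathcal{G})$. Since $\mathcal E_k = \mathcal E \cap L_k^2(\mathcal G)$ and Lemma \ref{persistence} already provides $e^{-it\bb H^\alpha_\delta}\bb U_0 \in \mathcal E$ whenever $\bb U_0 \in \mathcal E_k \subset \mathcal E$, the task reduces to proving that the group preserves $L_k^2(\mathcal G)$.

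The key observation is that the resolvent $\bb R_z$ itself preserves $L_k^2(\mathcal G)$. Indeed, if $\bb V \in L_k^2(\mathcal G)$, then the Laplace-type integrals $t_j(z) = \tfrac12\int_0^\infty v_j(y)e^{-zy}dy$ inherit the block structure $t_1 = \dots = t_k$ and $t_{k+1} = \dots = t_N$. Consequently every difference $t_j - t_{j+1}$ appearing on the right-hand side of \eqref{c_tilde_delta} vanishes except possibly the one at index $j=k$. The first $N-1$ scalar equations then force $\tilde c_1 = \dots = \tilde c_k$ and $\tilde c_{k+1} = \dots = \tilde c_N$, while the $N$-th equation consistently determines the common sum. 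Substituting back into \eqref{res0} shows that the components of $\bb R_z\bb V$ respect the same pairwise equalities, hence $\bb R_z\bb V \in L_k^2(\mathcal G)$.

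Next I would invoke the representation \eqref{group0}. The subspace $L_k^2(\mathcal G)$ is closed in $L^2(\mathcal G)$, being defined by finitely many almost-everywhere equalities $v_i=v_j$, so the Bochner-type integral $\tfrac{i}{\pi}\int_{-\infty}^{\infty} e^{-it\tau^2}\tau\, \bb R_{i\tau}\bb V\, d\tau$ stays inside $L_k^2(\mathcal G)$; this yields invariance under $e^{-it\bb H^\alpha_\delta}$ for $\alpha>0$. The case $\alpha<0$ is handled identically via the analogous resolvent formula recalled in \cite[Remark 3.2]{AngGol16}, since the algebraic step above uses only the block structure of the coefficient system, which persists under the sign change of $\alpha$.

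I do not expect any serious obstacle; the only step requiring care is the bookkeeping in \eqref{c_tilde_delta}, where one must verify that allowing the single non-vanishing difference $t_k-t_{k+1}$ does not couple the two blocks of $\tilde c_j$'s into inconsistent values, and that the last ``sum'' equation remains compatible with any block-constant solution—which it is, because that equation depends only on $\sum_j \tilde c_j$. Combining the two invariances then gives $e^{-it\bb H^\alpha_\delta}\bb U_0 \in \mathcal E \cap L_k^2(\mathcal G) = \mathcal E_k$, as required.
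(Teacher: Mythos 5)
Your proof is correct and follows exactly the route the paper intends: the paper merely asserts that Lemma \ref{persistence_k} ``follows from formulas \eqref{group0}--\eqref{c_tilde_delta}'', and your argument --- block structure of the $t_j$'s forcing block structure of the $\tilde c_j$'s via the first $N-1$ rows of \eqref{c_tilde_delta}, hence $\bb R_z$-invariance of the closed subspace $L^2_k(\mathcal G)$, hence invariance under the integral representation \eqref{group0} --- is precisely the computation being alluded to. No gaps; you have simply supplied the details the paper omits.
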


Using essentially the above lemmas we prove the following extended well-posed\-ness result.
\begin{theorem}\label{loc_well_posed}
 Let $p>1$, $\mu=\pm 1$. For any $\bb U_0 \in \EE$, there exists $T > 0$ such that
equation \eqref{NLS_graph} has a unique solution $\bb U(t) \in C ([-T,T],
\EE)
\cap C^1 ([-T,T], \EE')$ satisfying $\bb U(0)=\bb U_0$.  In particular, if  $\bb U_0 \in \EE_k$, then $\bb U(t)\in \EE_k$. For
each $T_0\in (0, T)$ and $p>2$, the mapping 
$
\bb U_0\in \mathcal E \to \bb U \in C ([-T_0,T_0],
\mathcal
E)
$
is of class $C^2$.

Furthermore, the conservation of energy and mass holds, that is,
$$E(\bb U(t))=E(\bb U_0),\quad M(\bb U(t))=||\bb U(t)||_2^2=||\bb U_0||_2^2,$$
where the energy $E$  is defined by
\begin{equation}\label{energy}
E(\bb U)=\tfrac 1{2}||\bb U'||_2^2-\tfrac {\mu}{p+1}||\mathbf{U}||_{p+1}^{p+1}+\tfrac{\alpha}{2}|u_1(0)|^2, \quad \mathbf{U}=(u_j)_{j=1}^N\in\EE.
\end{equation}
 \end{theorem}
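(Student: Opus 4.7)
The plan is to recast \eqref{NLS_graph} as the Duhamel integral equation
\begin{equation*}
\bb U(t) = e^{-it\bb H_\delta^\alpha}\bb U_0 + i\mu\int_0^t e^{-i(t-s)\bb H_\delta^\alpha}\bigl(|\bb U(s)|^{p-1}\bb U(s)\bigr)\, ds,
\end{equation*}
and solve it by a contraction argument in $X_T:=C([-T,T],\EE)$ for $T$ sufficiently small. The key linear estimate is $\|e^{-it\bb H_\delta^\alpha}\bb V\|_{\EE}\le C\|\bb V\|_{\EE}$, uniform in $t$: the $L^2$-part is automatic from unitarity of the group on $L^2(\mathcal{G})$, while the control of the derivative comes from Lemma \ref{rela00}, which identifies $\partial_x(e^{-it\bb H_\delta^\alpha}\bb V)$ up to the $L^2$-isometry $\mathcal{B}(\bb V')$ built from the free Schr\"odinger group on $\mathbb{R}$. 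Persistence of the continuity condition at the vertex is furnished by Lemma \ref{persistence}, and persistence of $\EE_k$ by Lemma \ref{persistence_k}.

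For the nonlinearity I use the Sobolev embedding $H^1(\mathbb{R}_+)\hookrightarrow L^\infty(\mathbb{R}_+)$ componentwise, so $\EE\hookrightarrow L^\infty(\mathcal{G})$. Since $F(\bb V):=|\bb V|^{p-1}\bb V$ acts edgewise and depends pointwise on $v_j$, the equal-value condition at $\nu=0$ is trivially preserved, giving $F:\EE\to\EE$. A standard Moser-type argument combined with the $L^\infty$ bound yields local Lipschitz continuity of $F$ on $\EE$, and for $p>2$ even $C^2$ smoothness, which will in turn yield the claimed $C^2$ regularity of the data-to-solution map via the implicit function theorem applied to the Duhamel operator. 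A fixed point in a closed ball of radius $2\|\bb U_0\|_\EE$ in $X_T$ then produces a unique local solution $\bb U\in C([-T,T],\EE)$; the invariance of $\EE_k$ under $F$ together with Lemma \ref{persistence_k} confines the iteration to $\EE_k$ when $\bb U_0\in\EE_k$.

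Having $\bb U\in C([-T,T],\EE)$, the equation itself gives $\partial_t\bb U=-i\bb H_\delta^\alpha\bb U+i\mu F(\bb U)\in C([-T,T],\EE')$, where $\bb H_\delta^\alpha\bb U$ is meant through the bounded quadratic form
\begin{equation*}
\langle \bb H_\delta^\alpha\bb V,\bb W\rangle=\sum_{j=1}^N\int_0^\infty v_j'\overline{w_j'}\,dx+\alpha v_1(0)\overline{w_1(0)}
\end{equation*}
on $\EE\times\EE$. Conservation of $M$ and $E$ is then obtained by the usual regularization: for $\bb U_0\in D_\alpha$ the iteration actually produces a strong solution for which $\tfrac{d}{dt}M(\bb U)$ and $\tfrac{d}{dt}E(\bb U)$ can be computed directly by pairing the equation with $\overline{\bb U}$ and $\partial_t\overline{\bb U}$ and using self-adjointness of $\bb H_\delta^\alpha$; density of $D_\alpha$ in $\EE$ and continuity of $M$, $E$ on $\EE$ propagate the identities to general $\EE$-data. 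The main technical obstacle is not the contraction itself but verifying that $F$ genuinely maps $\EE$ into $\EE$ with a Lipschitz (resp.\ $C^2$) estimate in the full $\EE$-norm, which requires the vertex condition to survive the nonlinearity; once that structural observation is made, the scheme reduces to a routine adaptation of Kato's semigroup argument.
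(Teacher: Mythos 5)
Your proposal is correct and follows essentially the same route as the paper: a contraction argument for the Duhamel map in $C([-T,T],\EE)$, with the derivative controlled via Lemma \ref{rela00}, vertex-condition persistence via Lemmas \ref{persistence} and \ref{persistence_k}, the $C^2$ dependence on data obtained from the implicit function theorem applied to the Duhamel operator (as in Linares--Ponce), and conservation laws by the standard regularization/density argument of Cazenave. The only cosmetic difference is that you estimate the nonlinearity through the embedding $\EE\hookrightarrow L^\infty(\mathcal{G})$ rather than the Gagliardo--Nirenberg inequality used in the paper, which amounts to the same bound $\||\bb U|^{p-1}\bb U\|_{H^1(\mathcal{G})}\le C\|\bb U\|_{H^1(\mathcal{G})}^p$.
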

 \begin{proof} The proof of theorem repeats the one of \cite[Theorem 3.4]{AngGol16}. We present it to  give a self-contained exposition of the subject. The local well-posedness result in $\EE$ follows from standard arguments of the Banach fixed point theorem applied to nonlinear Schr\"odinger equations (see  \cite{Caz}).
 Consider the  mapping $J_{\bb U_0}: C([-T, T], \EE )\longrightarrow C([-T,T], \EE)$ given by
\begin{equation}\label{J_U}
J_{\bb U_0}[\bb U](t)=e^{-it\bb H_{\delta}^\alpha}\bb U_0+\mu i\int\limits_0^te^{-i(t-s)\bb H_{\delta}^\alpha}|\bb U(s)|^{p-1}\bb U(s)ds,
\end{equation}
where $e^{-it\bb H_{\delta}^\alpha}$ represents the unitary group associated to model \eqref{NLS_graph}.
One needs to show that  the map $J_{\bb U_0}$ is well-defined. We start by estimating the nonlinear term $|\bb U(s)|^{p-1}\bb U(s)$. Using  the one-dimensional Gagliardo-Nirenberg inequality one can show (see formula (2.3) in \cite{AdaNoj14})
\begin{equation}\label{G-N_graph0}
||\bb U||_q \leq C||\bb U'||_2^{\frac12-\frac1q}||\bb U||_2^{\frac12+\frac1q},\quad q>2,\, C>0.
\end{equation}
Using \eqref{G-N_graph0}, the relation $|(|f|^{p-1} f)'|\leq C_0 |f|^{p-1} |f'|$ and   H\"older's inequality, we obtain  for $\bb U\in H^1(\mathcal{G})$
\begin{equation}\label{pres_space0}
|||\bb U|^{p-1}\bb U||_{H^1(\mathcal{G})}\leq C_1||\bb U||^p_{H^1(\mathcal{G})}.
\end{equation}
Let $\bb U_0, \bb U \in \EE$, then  from \eqref{J_U},  inequality \eqref{pres_space0}, $L^2$-unitarity of $e^{-it\bb H_\delta^\alpha}$ and   $e^{it\partial^2_x}$, and Lemma \ref{rela00}, we obtain the  estimate
\begin{equation*}\label{neq_well_1}
||J_{\bb U_0}[\bb U](t)||_{H^1(\mathcal{G})}\leq C_2||\bb U_0||_{H^1(\mathcal{G})}+C_3T\sup\limits_{s\in[0,T]}||\bb U(s)||^p_{H^1(\mathcal{G})},
\end{equation*}
where the positive constants $C_2, C_3$ do not depend on $\bb U_0$. Moreover, from Lemma \ref{persistence} we get $J_{\bb U_0}[\bb U](t)\in \EE$ for all $t$.
\iffalse
Thus,  $J_{\bb U_0}[\bb U](t)\in H^1(\mathcal{G})$.  Moreover,   Now, for any $\bb V\in \EE$ we have necessary that $\bf{R}_\mu \bb V\in \bb D_{\alpha}$ and so from (\ref{group}) follows immediately that $(e^{-it\bb H_{\delta}^\alpha}\bb V)_1(0)=...=(e^{-it\bb H_{\delta}^\alpha}\bb V)_N(0)$. Then, since $|\bb U(s)|^{p-1}\bb U(s)\in \EE$ follows immediately that $(J_{\bb U_0}[\bb U])_1(0)=...=(J_{\bb U_0}[\bb U])_N(0)$ for $\bb U_0, \bb U\in \EE$. Therefore, $J_{\bb U_0}[\bb U](t)\in \EE$ for all $t$.
\fi

The  continuity  and the contraction property of $J_{\bb U_0}[\bb U](t)$ are proved in a  standard way.
Therefore, we obtain the existence of a unique solution for the Cauchy problem associated to  \eqref{NLS_graph}  on $\EE$.

Next, we recall that the argument based on the contraction mapping principle above has the advantage that if the nonlinearity $F(\bb U, \overline{\bb U})=|\bb U|^{p-1}\bb U$ has a specific regularity, then it is inherited by the mapping data-solution. In particular, following the ideas in the proof of  \cite[Corollary 5.6]{LinPon09}, we consider  for $(\bb V_0, \bb V)\in B(\bb U_0;\epsilon)\times C([-T, T], \mathcal E) $ the mapping
$$
\Gamma(\bb V_0, \bb V)(t)=\bb V(t)- J_{\bb V_0}[\bb V](t),\qquad t\in [-T, T].
$$
Then $\Gamma(\bb U_0, \bb U)(t)=0$ for all $t\in [-T, T]$. For $p-1$ being an even integer,  $F(\bb U, \overline{\bb U})$ is smooth, and therefore $\Gamma$ is smooth. Hence, using the arguments applied for  obtaining the local well-posedness in  $\EE$ above,  we can show that the operator $\partial_\bb V\Gamma(\bb U_0, \bb U)$ is one-to-one and onto. Thus, by the Implicit Function Theorem  there exists a smooth mapping $\bb \Lambda: B(\bb U_0;\delta)\to C([-T, T], \EE)$ such that $\Gamma(\bb V_0, \bb \Lambda (\bb V_0))=0$ for all $\bb V_0\in B(\bb U_0;\delta)$. This argument establishes the smoothness  property of the mapping data-solution associated to equation  \eqref{NLS_graph} when  $p-1$ is an even integer.

If  $p-1$ is  not  an even integer and $p>2$, then $F(\bb U, \overline{\bb U})$ is $C^{[p]}$-function,   and consequently the mapping data-solution is of  class $C^{[p]}$ (see \cite[Remark 5.7]{LinPon09}). Therefore, for $p>2$ we conclude that the mapping data-solution is at least of class $C^{2}$.

Finally, from the  uniqueness of the solution to the Cauchy problem for \eqref{NLS_graph} in $\EE$ and Lemma \ref{persistence_k}   we get that for $\bb U_0\in \EE_k$ the solution $\bb U(t)$ to the Cauchy problem belongs to $\EE_k$ for any $t$.

The proof of conservation laws repeats the one in \cite{Caz} (see Theorem 3.3.1, 3.3.5 and 3.3.9).
\end{proof}

\begin{lemma}\label{glob_well}
The local solution of the Cauchy problem for  equation \eqref{NLS_graph}  is extended globally for $p\in (1,5)$  in the case  $\mu=1$, and for $p>1$ in the case $\mu=-1$ (i.e. $T=+\infty$).
\end{lemma}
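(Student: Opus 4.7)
My plan is to establish a uniform a priori bound on $\|\bb U(t)\|_{\EE}$ and then invoke the standard blow-up alternative: since Theorem \ref{loc_well_posed} provides a local solution whose existence time depends only on $\|\bb U_0\|_{\EE}$, an $\EE$-bound that is uniform in $t$ immediately yields global existence. The two conservation laws $M(\bb U(t))=M(\bb U_0)$ and $E(\bb U(t))=E(\bb U_0)$ will be the only nonlinear ingredients; I just have to combine them with two classical inequalities: the one-dimensional trace estimate on the half-line
\begin{equation*}
|v(0)|^2 \le 2\,\|v\|_{L^2(\mathbb{R}_+)}\,\|v'\|_{L^2(\mathbb{R}_+)},\qquad v\in H^1(\mathbb{R}_+),
\end{equation*}
and the Gagliardo--Nirenberg inequality \eqref{G-N_graph0} on $\mathcal{G}$.

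For the \emph{repulsive} case $\mu=-1$ the energy \eqref{energy} is
\begin{equation*}
E(\bb U) = \tfrac{1}{2}\|\bb U'\|_2^2 + \tfrac{1}{p+1}\|\bb U\|_{p+1}^{p+1} + \tfrac{\alpha}{2}|u_1(0)|^2 .
\end{equation*}
If $\alpha\ge 0$ the bound $\|\bb U'\|_2^2\le 2E(\bb U_0)$ is immediate. If $\alpha<0$ the trace estimate applied to $u_1$ together with Young's inequality yields, for any $\varepsilon>0$,
\begin{equation*}
\tfrac{|\alpha|}{2}|u_1(0)|^2 \le |\alpha|\,\|u_1\|_2\|u_1'\|_2 \le \tfrac{\varepsilon}{2}\|\bb U'\|_2^2+\tfrac{\alpha^2}{2\varepsilon}\|\bb U\|_2^2.
\end{equation*}
Choosing $\varepsilon=1/2$ and combining with mass conservation gives
$\tfrac{1}{4}\|\bb U'\|_2^2 \le E(\bb U_0)+\alpha^2\,M(\bb U_0)$,
hence a global-in-time $H^1$ bound with no restriction on $p>1$.

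For the \emph{attractive} case $\mu=1$ with $p\in (1,5)$ the energy is
\begin{equation*}
E(\bb U) = \tfrac{1}{2}\|\bb U'\|_2^2 - \tfrac{1}{p+1}\|\bb U\|_{p+1}^{p+1} + \tfrac{\alpha}{2}|u_1(0)|^2,
\end{equation*}
so that the $L^{p+1}$ term must be controlled. Applying \eqref{G-N_graph0} with $q=p+1$ gives
\begin{equation*}
\|\bb U\|_{p+1}^{p+1} \le C\,\|\bb U'\|_2^{(p-1)/2}\,\|\bb U\|_2^{(p+3)/2},
\end{equation*}
whose exponent $(p-1)/2$ on $\|\bb U'\|_2$ is strictly less than $2$ precisely when $p<5$. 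Handling the boundary term as above (trivially if $\alpha\ge 0$; via the trace/Young argument with a small $\varepsilon$ if $\alpha<0$), energy conservation becomes an inequality of the form
\begin{equation*}
\tfrac{1}{4}\|\bb U'\|_2^2 \le E(\bb U_0) + C' M(\bb U_0)^{(p+3)/4}\,\|\bb U'\|_2^{(p-1)/2} + C'' M(\bb U_0),
\end{equation*}
which, because $(p-1)/2<2$, forces $\|\bb U'\|_2$ to stay in a bounded set determined by the initial data only. Together with mass conservation this closes the a priori estimate, and the continuation argument delivers $T=+\infty$.

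The only slightly delicate step is the treatment of the vertex term $\tfrac{\alpha}{2}|u_1(0)|^2$ when $\alpha<0$: it is indefinite and must be reabsorbed into $\|\bb U'\|_2^2$ with a small enough constant, which is why I keep the parameter $\varepsilon$ free and optimize it after invoking Gagliardo--Nirenberg. Everything else is bookkeeping on conservation laws and the standard $H^1$ blow-up alternative inherited from Theorem \ref{loc_well_posed}.
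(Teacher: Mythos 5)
Your proof is correct and follows essentially the same route as the paper, which for $\mu=1$ simply cites \cite[Corollary 2.1]{AdaNoj14} (where the Gagliardo--Nirenberg argument you spell out is carried out) and for $\mu=-1$ records only the one-line energy inequality. In fact your treatment of the repulsive case is more complete than the paper's, since when $\alpha<0$ the sign-indefinite vertex term $\tfrac{\alpha}{2}|u_1(0)|^2$ does need the trace/Young absorption into $\|\bb U'\|_2^2$ that you supply.
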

\begin{proof}
The case $\mu=1$  was considered  in \cite[Corollary 2.1]{AdaNoj14}. For $\mu=-1$ the trivial inequality
$$
\tfrac 1{2}||\bb U'||_2^2+\tfrac{\alpha}{2}|u_1(0)|^2= E(\bb U) + \tfrac {\mu}{p+1}||\mathbf{U}||_{p+1}^{p+1}< E(\bb U)
$$
induces the global existence for any $p>1$.
\end{proof}
\subsection{Existence  of standing waves}\label{exist}

Let us discuss briefly the existence of the standing wave solutions $\bb U(t,x)=e^{i\omega t}\bb \Phi(x)$
 to \eqref{NLS_graph}.  It is easily seen that the  amplitude $\mathbf{\Phi}\in D_\alpha$  satisfies the following  stationary equation
\begin{equation}\label{H_alpha}
\mathbf{H}^\alpha_\delta\mathbf{\Phi}+\omega\mathbf{\Phi}-\mu|\mathbf{\Phi}|^{p-1}\mathbf{\Phi}=0.
\end{equation}

 In \cite{AdaNoj14} the authors obtained the following description of all solutions   to equation \eqref{H_alpha} in the case $\mu=1$.
\begin{theorem}\label{1bump}
Let  $[s]$ denote the integer part of $s\in\mathbb{R}$, $\alpha\neq 0$ and $\mu=1$.  Then equation \eqref{H_alpha} has $\left[\tfrac{N-1}{2}\right]+1$ (up to permutations of the edges of $\mathcal{G}$) vector solutions $\mathbf{\Phi}_k^\alpha=(\varphi^\alpha_{k,j})_{j=1}^N, \,\,k=0,...,\left[\tfrac{N-1}{2}\right]$, which are given by
\begin{equation}\label{Phi_k}
\begin{split}
 \varphi_{k,j}^\alpha(x)&= \left\{
                    \begin{array}{ll}
                      \Big[\frac{(p+1)\omega}{2} \sech^2\Big(\frac{(p-1)\sqrt{\omega}}{2}x-a_k\Big)\Big]^{\frac{1}{p-1}}, & \quad\hbox{$j=1,...,k$;} \\
                     \Big[\frac{(p+1)\omega}{2} \sech^2\Big(\frac{(p-1)\sqrt{\omega}}{2}x+a_k\Big)\Big]^{\frac{1}{p-1}}, & \quad\hbox{$j=k+1,...,N,$}
                    \end{array}
                  \right.\\
                  \text{where}\;\; a_k&=\tanh^{-1}\left(\frac{\alpha}{(2k-N)\sqrt{\omega}}\right),\,\,\text{and}\,\,\,\,\omega>\tfrac{\alpha^2}{(N-2k)^2}.
                  \end{split}
\end{equation}
\end{theorem}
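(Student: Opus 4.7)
The proof is essentially an ODE/phase-plane analysis on each edge combined with the two vertex conditions in the domain $D_\alpha$. I would proceed in four steps.

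\textbf{Step 1 (Reduction to real positive components).} On each edge the stationary equation reads $-\varphi_j''+\omega\varphi_j=|\varphi_j|^{p-1}\varphi_j$ on $\mathbb{R}_+$. Writing $\varphi_j=\rho_j e^{i\theta_j}$ and separating real/imaginary parts, one checks that the imaginary part of the equation forces $(\rho_j^2\theta_j')'=0$; together with the $H^2$-decay at $+\infty$ this gives $\theta_j\equiv$ const on each edge. The continuity condition $\varphi_1(0)=\dots=\varphi_N(0)$ in \eqref{D_alpha} then equalises the constants (at points where $\rho_j(0)\neq0$), so a global phase rotation reduces the analysis to real, non-negative profiles $\varphi_j\ge 0$ satisfying $-\varphi_j''+\omega\varphi_j=\varphi_j^p$.

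\textbf{Step 2 (Phase-plane solution on each half-line).} On each edge the first integral of the ODE is
\begin{equation*}
(\varphi_j')^2=\omega\varphi_j^2-\tfrac{2}{p+1}\varphi_j^{p+1}+E_j .
\end{equation*}
Because $\varphi_j\in H^2(\mathbb{R}_+)$ one has $\varphi_j,\varphi_j'\to0$ as $x\to\infty$, hence $E_j=0$. The homoclinic orbit with $E_j=0$ and $\varphi_j>0$ is, by the classical integration (or the substitution $\varphi_j=C\,\psi^{2/(p-1)}$), uniquely given up to translation by
\begin{equation*}
\varphi_j(x)=\Big[\tfrac{(p+1)\omega}{2}\,\sech^2\!\big(B(x-x_j)\big)\Big]^{1/(p-1)},\qquad B:=\tfrac{(p-1)\sqrt{\omega}}{2},
\end{equation*}
for some $x_j\in\mathbb{R}$ (the only other branch $\varphi_j\equiv 0$ is ruled out, since if some $\varphi_j\equiv 0$ the common value at the vertex vanishes and, together with the Kirchhoff-type condition, forces $\mathbf{\Phi}\equiv 0$).

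\textbf{Step 3 (Applying the two vertex conditions).} Evenness of $\sech$ gives $\varphi_j(0)=C\sech^{2/(p-1)}(Bx_j)$ with $C=[(p+1)\omega/2]^{1/(p-1)}$, so the continuity $\varphi_1(0)=\dots=\varphi_N(0)$ forces $|x_1|=\dots=|x_N|=:a/B$. Let $k$ be the number of indices with $x_j>0$ (``bumps''), the remaining $N-k$ corresponding to ``tails'' ($x_j<0$); up to a permutation of edges we may order bumps first. A direct differentiation yields
\begin{equation*}
\varphi_j'(0)=\mathrm{sgn}(x_j)\, C\,qB\,\sech^{q}(a)\tanh(a),\qquad q=\tfrac{2}{p-1}.
\end{equation*}
Substituting into $\sum_{j=1}^N\varphi_j'(0)=\alpha\,\varphi_1(0)$ and cancelling the common factor $C\sech^{q}(a)\neq 0$ gives
\begin{equation*}
qB\,(2k-N)\tanh(a)=\alpha,
\end{equation*}
and since $qB=\sqrt{\omega}$, we obtain $\tanh(a)=\alpha/((2k-N)\sqrt{\omega})$. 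This has a real solution iff $|\alpha|<|2k-N|\sqrt{\omega}$, i.e. $\omega>\alpha^2/(N-2k)^2$, reproducing the formula for $a_k$ in \eqref{Phi_k}; the case $2k=N$ is ruled out because $\alpha\neq0$.

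\textbf{Step 4 (Counting up to permutations).} The construction associates to each $k\in\{0,1,\dots,N\}\setminus\{N/2\}$ one real number $a_k$ and hence one profile $\mathbf{\Phi}_k^\alpha$. Replacing $k$ by $N-k$ sends $a_k\mapsto -a_k$ and exchanges the roles of the bump- and tail-edges; the resulting vector differs from $\mathbf{\Phi}_k^\alpha$ only by a permutation of the components. Therefore the equivalence classes under permutation of edges are indexed by $k=0,1,\dots,[(N-1)/2]$, giving exactly $[(N-1)/2]+1$ solutions.

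The only delicate point I expect is \textbf{Step 1}: handling the non-smooth nonlinearity $|\varphi|^{p-1}\varphi$ when constructing the phase, and ruling out solutions that vanish at some interior point (forcing an unwanted second translate of the sech-soliton on that edge). The former is handled by noting that $\varphi_j\in H^2$ is $C^1$ and the zero set is isolated, letting one define $\theta_j$ locally; the latter is excluded by the decay at infinity together with the uniqueness of the homoclinic orbit with $E_j=0$.
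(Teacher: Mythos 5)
Your argument is correct, but note that the paper contains no proof of Theorem \ref{1bump} to compare against: the statement is imported verbatim from \cite{AdaNoj14}, and the derivation you give (reduction to real, sign-definite components via the vanishing of the conserved quantities $\im(\overline{\varphi_j}\varphi_j')$ and $|\varphi_j'|^2-\omega|\varphi_j|^2+\tfrac{2}{p+1}|\varphi_j|^{p+1}$, integration of the zero-energy homoclinic orbit on each edge, and matching through the two vertex conditions in \eqref{D_alpha}) is the standard one and is essentially how the result is established in that reference. The one place I would tighten the write-up is the bookkeeping between Steps 3 and 4: with your normalization $a=B|x_j|\ge 0$ and $k$ equal to the number of bumps, the relation $(2k-N)\sqrt{\omega}\,\tanh(a)=\alpha$ is solvable only for those $k$ with $\operatorname{sgn}(2k-N)=\operatorname{sgn}(\alpha)$ and $\omega>\alpha^2/(N-2k)^2$, and these admissible $k$ already number exactly $\left[\tfrac{N-1}{2}\right]+1$ with pairwise non-permutation-equivalent profiles; the theorem instead fixes $k\le\left[\tfrac{N-1}{2}\right]$ and lets $a_k$ carry a sign (so that for $\alpha>0$ the first $k$ edges are tails rather than bumps, cf.\ Remark \ref{tails_bumps}), and your $k\leftrightarrow N-k$ identification correctly reconciles the two conventions. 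This is a cosmetic point, not a gap.
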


\begin{remark}\label{tails_bumps}
\begin{itemize}
\item[$(i)$]
Note that in the case $\alpha<0$ vector  $\mathbf{\Phi}_k^\alpha=(\varphi^\alpha_{k,j})_{j=1}^N$ has $k$ \textit{bumps} and $N-k$ \textit{tails}. It is easily seen that
 $\mathbf{\Phi}^{\alpha}_0$ is the \textit{N-tails profile}. Moreover, the $N$-tails profile is the only symmetric (i.e. invariant under permutations of the edges) solution of equation \eqref{H_alpha}.  In the case $N=5$ we have three types of profiles:  \textit{5-tails profile},  \textit{4-tails/1-bump profile} and  \textit{3-tails/2-bumps profile}. They are demonstrated on Figure 1 (from the left to the right).
 \item[$(ii)$]  In the case $\alpha>0$ vector  $\mathbf{\Phi}_k^\alpha=(\varphi^\alpha_{k,j})_{j=1}^N$ has $k$ \textit{tails} and $N-k$ \textit{bumps} respectively. For $N=5$ we have: \textit{5-bumps profile, 4-bumps/1-tail profile, 3-bumps/ 2-tails profile}. They are demonstrated on Figure 2 (from the left to the right).
 \end{itemize}
 \end{remark}
 	\begin{tikzpicture}[scale=0.6]
	\clip (-2,-3) rectangle (18,2);
	    \draw[-,color=gray] (2,1).. controls +(-0.7,-0.7) ..  (0.3,0.2);
       \draw[-,color=gray] (2,1).. controls +(-0.3,-0.9)  ..  (0.5,-1);
        \draw[-,color=gray] (2,1).. controls +(0.2,-1)  ..  (2.8,-1.2);
        \draw[-,color=gray] (2,1) .. controls +(0.2,-0.5)  ..  (4.2,-0.2);
            \draw[-,color=gray] (2,1).. controls +(0.6,-0.3)  ..  (3.5,1);
		\draw[-latex,thin](2,0)--++(-2,0);
        \draw[-latex, thin](2,0)--++(-1.8,-1.3);
        \draw[-latex, thin](2,0)--++(1,-1.7);
		\draw[-latex, thin](2,0)--++(2.5,-0.5);
        \draw[-latex, thin](2,0)--++(2,1);
		%\node[label={[xshift=2cm, yshift=-2.5cm] Figure ?? $f=\sin(x)$}]{};

		\begin{scope}[shift={(6,0)}]
	     \draw[-,color=gray] (2,1).. controls +(-0.5,0.5) and +(1.5,-0.1) ..  (0.3,0.2);
       \draw[-,color=gray] (2,1).. controls +(-0.3,-0.9)  ..  (0.5,-1);
        \draw[-,color=gray] (2,1).. controls +(0.2,-1)  ..  (2.8,-1.2);
        \draw[-,color=gray] (2,1) .. controls +(0.2,-0.5)  ..  (4.2,-0.2);
            \draw[-,color=gray] (2,1).. controls +(0.6,-0.3)  ..  (3.5,1);
		\draw[-latex,thin](2,0)--++(-2,0);
        \draw[-latex, thin](2,0)--++(-1.8,-1.3);
        \draw[-latex, thin](2,0)--++(1,-1.7);
		\draw[-latex, thin](2,0)--++(2.5,-0.5);
        \draw[-latex, thin](2,0)--++(2,1);
		\node[label={[xshift=1.3cm, yshift=-2cm] Figure 1}]{};	\end{scope}

		\begin{scope}[shift={(12,0)}]
	     \draw[-,color=gray] (2,1).. controls +(-0.5,0.5) and +(1.5,-0.1) ..  (0.3,0.2);
       \draw[-,color=gray] (2,1).. controls +(-0.3,-0.9)  ..  (0.5,-1);
        \draw[-,color=gray] (2,1).. controls +(0.2,-1)  ..  (2.8,-1.2);
        \draw[-,color=gray] (2,1) .. controls +(0.2,-0.5)  ..  (4.2,-0.2);
            % \draw[-,color=gray] (2,1).. controls +(0.2,0.5) and +(0.5,-0.5) ..  (3.5,1);
            \draw[-,color=gray] (2,1).. controls +(0.3,0.5) and +(-0.8,-0.5)  ..  (3.5,1);
		\draw[-latex,thin](2,0)--++(-2,0);
        \draw[-latex, thin](2,0)--++(-1.8,-1.3);
        \draw[-latex, thin](2,0)--++(1,-1.7);
		\draw[-latex, thin](2,0)--++(2.5,-0.5);
        \draw[-latex, thin](2,0)--++(2,1);
	\end{scope}
\end{tikzpicture}

\

\
\begin{tikzpicture}[scale=0.6]
	\clip (-2,-3) rectangle (18,2);
	    \draw[-,color=gray] (2,1).. controls +(-0.5,0.5) and +(1.5,-0.1) ..  (0.3,0.2);
       \draw[-,color=gray] (2,1).. controls +(-0.3,0.4) and +(1, 0.5) ..  (0.5,-1);
        \draw[-,color=gray] (2,1).. controls +(0.2,0.3) and +(-0.1, -0.1) ..  (2.8,-1.2);
        \draw[-,color=gray] (2,1) .. controls +(0.2,0.5) and +(-1.1, 0.3) ..  (4.2,-0.2);
            \draw[-,color=gray] (2,1)..  controls +(0.3,0.5) and +(-0.8,-0.5)  ..  (3.5,1);
		\draw[-latex,thin](2,0)--++(-2,0);
        \draw[-latex, thin](2,0)--++(-1.8,-1.3);
        \draw[-latex, thin](2,0)--++(1,-1.7);
		\draw[-latex, thin](2,0)--++(2.5,-0.5);
        \draw[-latex, thin](2,0)--++(2,1);
		%\node[label={[xshift=2cm, yshift=-2.5cm] Figure ?? $f=\sin(x)$}]{};

		\begin{scope}[shift={(6,0)}]
	     \draw[-,color=gray] (2,1).. controls +(-0.7,-0.7) ..  (0.3,0.2);
       \draw[-,color=gray] (2,1).. controls +(-0.3,0.4) and +(1, 0.5) ..  (0.5,-1);
        \draw[-,color=gray] (2,1).. controls +(0.2,0.3) and +(-0.1, -0.1) ..  (2.8,-1.2);
        \draw[-,color=gray] (2,1) .. controls +(0.2,0.5) and +(-1.1, 0.3) ..  (4.2,-0.2);
           \draw[-,color=gray] (2,1)..  controls +(0.3,0.5) and +(-0.8,-0.5)  ..  (3.5,1);
		\draw[-latex,thin](2,0)--++(-2,0);
        \draw[-latex, thin](2,0)--++(-1.8,-1.3);
        \draw[-latex, thin](2,0)--++(1,-1.7);
		\draw[-latex, thin](2,0)--++(2.5,-0.5);
        \draw[-latex, thin](2,0)--++(2,1);
		\node[label={[xshift=1.3cm, yshift=-2cm] Figure 2}]{};	\end{scope}

		\begin{scope}[shift={(12,0)}]
	    \draw[-,color=gray] (2,1).. controls +(-0.7,-0.7) ..  (0.3,0.2);
       \draw[-,color=gray] (2,1).. controls +(-0.3,0.4) and +(1, 0.5) ..  (0.5,-1);
        \draw[-,color=gray] (2,1).. controls +(0.2,0.3) and +(-0.1, -0.1) ..  (2.8,-1.2);
        \draw[-,color=gray] (2,1) .. controls +(0.2,0.5) and +(-1.1, 0.3) ..  (4.2,-0.2);
          \draw[-,color=gray] (2,1).. controls +(0.6,-0.3)  ..  (3.5,1);
		\draw[-latex,thin](2,0)--++(-2,0);
        \draw[-latex, thin](2,0)--++(-1.8,-1.3);
        \draw[-latex, thin](2,0)--++(1,-1.7);
		\draw[-latex, thin](2,0)--++(2.5,-0.5);
        \draw[-latex, thin](2,0)--++(2,1);
	\end{scope}
\end{tikzpicture}
\

\
 	In \cite{AdaNoj14} it was shown that for any $p>1$ there is  $\alpha^*<0$ such that for $-N\sqrt{\omega}<\alpha<\alpha^*$ the $N$-tails profile  $\mathbf{\Phi}^\alpha_{0}$ minimizes the action functional
 \begin{equation}\label{S_graph}
 S(\mathbf{V})=\tfrac 1{2}||\mathbf{V}'||_2^2+\tfrac{\omega}{2}||\mathbf{V}||_2^2 -\tfrac 1{p+1}||\mathbf{V}||_{p+1}^{p+1}+\tfrac{\alpha}{2}|v_1(0)|^2, \quad \mathbf{V}=(v_j)_{j=1}^N\in\EE,
 \end{equation}
 on the  Nehari manifold
 $$
 \mathcal N=\{\mathbf{V}\in \EE\setminus\{0\}: ||\mathbf{V}'||_2^2+\omega ||\mathbf{V}||_2^2 - ||\mathbf{V}||_{p+1}^{p+1}+\alpha |v_1(0)|^2=0\}.
 $$
 Namely, the $N$-tails profile $\mathbf{\Phi}^\alpha_{0}$ is the ground state  for the action $S$ on the manifold $\mathcal N$.
In \cite{AdaNoj15} the authors showed that $\bb \Phi_0^\alpha$ is a local minimizer of the energy functional  $E$ defined by \eqref{energy} among functions with equal  mass.

Note that  $\mathbf{\Phi}^\alpha_{k}\in \mathcal{N}$ for any $k$. In \cite{AdaNoj14} it was proved that for $k\neq 0$ and $\alpha<0$ we have   $S(\mathbf{\Phi}^\alpha_{0})<S(\mathbf{\Phi}^\alpha_{k})<S(\mathbf{\Phi}^\alpha_{k+1})$.
  This fact justifies the name \textit{excited states} for the  stationary states $\mathbf{\Phi}^\alpha_k,\, k\neq 0$. It is worth noting that the profiles $\mathbf{\Phi}^\alpha_{k},\, k\neq 0,$ are excited in the sense of minimization of the energy functional. In particular,  in \cite{AdaNoj15} it  was shown that $E(\mathbf{\Phi}^\alpha_{k}(\omega_k))<E(\bb \Phi^\alpha_{k+1}(\omega_{k+1}))$,  where $\omega_k$ and $\omega_{k+1}$ are such that  $||\bb \Phi^\alpha_{k}(\omega_{k})||_2=||\bb \Phi^\alpha_{k+1}(\omega_{k+1})||_2=m$, i.e. for a fixed mass constraint. Here $\mathbf{\Phi}^\alpha_{k}(\omega)$  stands for $\mathbf{\Phi}^\alpha_{k}$ (formally $\mathbf{\Phi}^\alpha_{k}$ is a function of $\omega$).

For $\alpha>0$  nothing is known about variational properties of  the profiles  $\mathbf{\Phi}^\alpha_k$.  In particular, one can easily verify that $S(\mathbf{\Phi}^\alpha_{0})>S(\mathbf{\Phi}^\alpha_{k})>S(\mathbf{\Phi}^\alpha_{k+1})$, \,$k\neq 0$.

\section{ The orbital stability of standing waves of the NLS-$\delta$   equation with attractive  nonlinearity}\label{att}
 Crucial role in
the stability analysis is played by the symmetries of  NLS equation \eqref{NLS_graph}. The basic symmetry associated to the  mentioned equation is  phase-invariance (in particular, translation invariance  does  not hold due to the defect at $\nu=0$). Thus,  it is reasonable  to define orbital stability as follows.
\begin{definition}\label{def_stabil}
 The standing wave $\mathbf U(t, x) = e^{i\omega t}\mathbf{\Phi}(x)$ is said to be \textit{orbitally stable} in a Hilbert space $\mathcal{H}$  if for any $\varepsilon > 0$ there exists $\eta > 0$ with the following property: if $\mathbf U_0 \in \mathcal{H}$  satisfies $||\mathbf U_0-\mathbf{\Phi}||_{\mathcal{H}} <\eta$,
then the solution $\mathbf U(t)$ of \eqref{NLS_graph} with $\mathbf U(0) = \mathbf U_0$ exists for any $t\in \mathbb{R}$, and
\[\sup\limits_{t\in \mathbb{R}}\inf\limits_{\theta\in\mathbb{R}}||\mathbf U(t)-e^{i\theta}\mathbf{\Phi}||_{\mathcal{H}} < \varepsilon.\]
Otherwise, the standing wave $\mathbf U(t, x) = e^{i\omega t}\mathbf{\Phi}(x)$ is said to be \textit{orbitally unstable} in $\mathcal{H}$.
\end{definition}
\subsection{Stability framework}\label{sub3.1}
  To formulate the stability theorem   for  NLS-$\delta$ equation  \eqref{NLS_graph} we will establish some basic objects. Let $\bb\Phi_k^\alpha$ be  defined by \eqref{Phi_k}. In what follows we will use the notation $\bb \Phi_k:=\bb\Phi_k^\alpha.$
We start  verifying that the profile $\bb \Phi_k$ is a critical point of the action functional $S$ defined by \eqref{S_graph}. Indeed, for $\bb U, \bb V\in \mathcal{E}$,
 \begin{equation*}
 \begin{split}
&S'(\bb U)\bb V=\frac{d}{dt}S(\bb U+t\bb V)|_{t=0}\\
&=\re\Big((\bb U',\bb V')_2+\omega(\bb U,\bb V)_2-(|\bb U|^{p-1}\bb U, \bb V)_2+\alpha u_1(0)\overline{v_1(0)}\Big).
 \end{split}
 \end{equation*}
Since  $\bb\Phi_k$ satisfies \eqref{H_alpha}, we get $S'(\bb \Phi_k)=0$.

In the approach by  \cite{GrilSha87, GrilSha90, oh} crucial role is played by spectral properties of the linear operator associated with the second derivative of $S$ calculated  at $\bb \Phi_k$ (linearization of \eqref{NLS_graph}). Thus, splitting   $\bb U, \bb V\in \mathcal{E}$ into real and imaginary parts $\mathbf{U}=\mathbf{U}^1+i\mathbf{U}^2$ and $\mathbf{V}=\mathbf{V}^1+i\mathbf{V}^2$, with the vector functions $\mathbf{U}^j,\mathbf{V}^j,\,j\in\{1,2\},$ being  real-valued, we get
\begin{equation*}\label{q_form}
\begin{split}
&S''(\bb \Phi_k)(\bb U, \bb V)\\
&=\Big[((\bb U^1)',(\bb V^1)')_2+\omega(\bb U^1,\bb V^1)_2-(p(\bb\Phi_k)^{p-1}\bb U^1, \bb V^1)_2+\alpha u^1_1(0)v^1_1(0)\Big]\\&+\Big[((\bb U^2)',(\bb V^2)')_2+\omega(\bb U^2,\bb V^2)_2-((\bb\Phi_k)^{p-1}\bb U^2, \bb V^2)_2+\alpha u^2_1(0)v^2_1(0)\Big].
\end{split}
\end{equation*}
Then it is easily seen that $S''(\bb \Phi_k)(\bb U, \bb V)$ can be formally rewritten as
\begin{equation}\label{BBS}
S''(\bb \Phi_k)(\bb U, \bb V)=B^\alpha_{1,k}(\bb U^1,\bb V^1)+B^\alpha_{2,k}(\bb U^2,\bb V^2).
\end{equation}
Here bilinear   forms  $B^\alpha_{1,k}$ and $B^\alpha_{2,k}$ are defined  for  $\bb F=(f_j)_{j=1} ^N, \bb G=(g_j)_{j=1} ^N\in \EE$  by
\begin{equation}\label{spec14}
\begin{split}
&B^\alpha_{1,k}(\bb F,\bb G)=\sum\limits_{j=1}^N\int\limits_0^\infty(f_j'g_j'+\omega f_jg_j-p(\varphi_{k,j})^{p-1}f_jg_j)dx+\alpha  f_1(0)g_1(0),\\
&B^\alpha_{2,k}(\bb F,\bb G)=\sum\limits_{j=1}^N\int\limits_0^\infty(f_j'g_j'+\omega f_jg_j-(\varphi_{k,j})^{p-1}f_jg_j)dx+\alpha  f_1(0)g_1(0),
\end{split}
\end{equation}
where $\varphi_{k,j}=\varphi_{k,j}^\alpha$.
 Next,  we determine  the self-adjoint operators associated with  the forms $B^\alpha_{j,k}$ in order to  establish a self-contained analysis.

First  note that the forms $B^\alpha_{j,k},\,j\in\{1,2\},$
are bilinear bounded from below and closed. Thus, there appear   self-adjoint operators  $\bb L^\alpha_{1,k}$  and $\bb L^\alpha_{2,k}$
associated (uniquely) with $B^\alpha_{1,k}$ and $B^\alpha_{2,k}$ by  the First Representation Theorem (see  \cite[Chapter VI, Section 2.1]{kato}), namely,
 \begin{equation}\label{bilinear}
 \begin{split}
 \begin{array}{ll}
  &\bb L^\alpha_{j,k}\bb V=\bb W,\qquad j\in\{1,2\},\\
 &\dom(\bb L^\alpha_{j,k})=\{\bb V\in \EE:\, \exists \bb W\in   L^2(\mathcal{G})\; s.t.\; \forall \bb Z\in \EE, \;B^\alpha_{j,k}(\bb V,\bb Z)=(\bb W, \bb Z)_2 \}.
  \end{array}
 \end{split}
\end{equation}
In the following theorem we describe the  operators $\bb L^\alpha_{1,k}$ and $\bb L^\alpha_{2,k}$ in more explicit
form.
\begin{theorem}\label{repres}
The operators  $\bb L^\alpha_{1,k}$ and $\bb L^\alpha_{2,k}$ defined by  \eqref{bilinear}  are given  on the domain $D_\alpha$ by
\begin{equation*}\label{NLSL9}
 \begin{split}
& \bb L^\alpha_{1,k}=\left(\Big(-\frac{d^2}{dx^2}+\omega-p(\varphi_{k,j})^{p-1}\Big)\delta_{i,j}\right), \\
& \bb L^\alpha_{2,k}=\left(\Big(-\frac{d^2}{dx^2}+\omega-(\varphi_{k,j})^{p-1}\Big)\delta_{i,j}\right),
 \end{split}
\end{equation*}
where $\delta_{i,j}$ is the Kronecker symbol.
\end{theorem}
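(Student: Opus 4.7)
The strategy is to verify directly that the operator $\tilde{\bb L}^\alpha_{1,k}$ acting componentwise as $\bigl(-\partial_x^2+\omega-p(\varphi_{k,j})^{p-1}\bigr)\delta_{i,j}$ on the domain $D_\alpha$ (and the analogous $\tilde{\bb L}^\alpha_{2,k}$) satisfies the defining identity of the First Representation Theorem for $B^\alpha_{1,k}$ (resp.\ $B^\alpha_{2,k}$), and then to argue that $D_\alpha$ is in fact the full domain produced by that theorem. Since the two cases differ only by the factor of $p$ in front of the potential, I would write out the argument for $\bb L^\alpha_{1,k}$ and merely indicate that the same reasoning applies to $\bb L^\alpha_{2,k}$.

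The first step is the inclusion $D_\alpha\subset \dom(\bb L^\alpha_{1,k})$. Given any $\bb V=(v_j)_{j=1}^N\in D_\alpha$ and $\bb Z=(z_j)_{j=1}^N\in \EE$, integrate by parts edge-by-edge in the first term of $B^\alpha_{1,k}(\bb V,\bb Z)$; the boundary contribution at infinity vanishes by the $H^2(\mathbb R_+)$ decay, and the contribution at $0$ is $-\sum_{j=1}^N v_j'(0)z_j(0)$. Since $\bb Z\in\EE$, we have $z_1(0)=\dots=z_N(0)$, so this sum equals $-z_1(0)\sum_{j=1}^N v_j'(0)=-\alpha v_1(0)z_1(0)$ by the $\delta$-condition on $\bb V$. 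This boundary term exactly cancels the form term $\alpha v_1(0)z_1(0)$, leaving
\begin{equation*}
B^\alpha_{1,k}(\bb V,\bb Z)=\sum_{j=1}^N\int_0^\infty\bigl(-v_j''+\omega v_j-p(\varphi_{k,j})^{p-1}v_j\bigr)z_j\,dx=(\tilde{\bb L}^\alpha_{1,k}\bb V,\bb Z)_2,
\end{equation*}
so $\bb V\in\dom(\bb L^\alpha_{1,k})$ and $\bb L^\alpha_{1,k}\bb V=\tilde{\bb L}^\alpha_{1,k}\bb V$. Here one uses that $p(\varphi_{k,j})^{p-1}v_j\in L^2(\mathbb R_+)$ because $\varphi_{k,j}$ is bounded.

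The reverse inclusion $\dom(\bb L^\alpha_{1,k})\subset D_\alpha$ is the delicate part. Take $\bb V\in\dom(\bb L^\alpha_{1,k})$ with $\bb L^\alpha_{1,k}\bb V=\bb W\in L^2(\mathcal G)$. Testing the identity $B^\alpha_{1,k}(\bb V,\bb Z)=(\bb W,\bb Z)_2$ first against $\bb Z$ whose components are supported in $C_0^\infty(\mathbb R_+)$ (one edge at a time), one obtains $-v_j''+\omega v_j-p(\varphi_{k,j})^{p-1}v_j=w_j$ in $\mathcal D'(\mathbb R_+)$, from which interior elliptic regularity yields $v_j\in H^2_{\mathrm{loc}}(\mathbb R_+)$; since $\bb V\in H^1(\mathcal G)$ and $v_j''=\omega v_j-p(\varphi_{k,j})^{p-1}v_j-w_j\in L^2(\mathbb R_+)$, in fact $\bb V\in H^2(\mathcal G)$. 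The continuity condition $v_1(0)=\dots=v_N(0)$ is automatic from $\bb V\in\EE$. Finally, testing against an arbitrary $\bb Z\in\EE$ with $z_1(0)=\dots=z_N(0)=c\neq0$ and integrating the first term of $B^\alpha_{1,k}(\bb V,\bb Z)$ by parts (using the already-established $H^2$ regularity), the resulting identity forces $c\bigl(\sum_{j=1}^N v_j'(0)-\alpha v_1(0)\bigr)=0$, which yields the $\delta$-condition and hence $\bb V\in D_\alpha$.

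The main obstacle is the boundary-condition extraction in the reverse inclusion: one has to be careful to have enough freedom in the test functions $\bb Z\in\EE$ (which restrict only the common value at $0$ but not the individual derivatives) so that the cancellation above actually forces the single scalar identity $\sum v_j'(0)=\alpha v_1(0)$ rather than $N$ separate conditions. Once this is done for $\bb L^\alpha_{1,k}$, the proof for $\bb L^\alpha_{2,k}$ is identical up to replacing $p(\varphi_{k,j})^{p-1}$ by $(\varphi_{k,j})^{p-1}$, concluding the theorem.
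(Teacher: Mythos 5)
Your proposal is correct, but it takes a genuinely different route from the paper. You prove the identification directly: the inclusion $D_\alpha\subset\dom(\bb L^\alpha_{1,k})$ by integration by parts, and the reverse inclusion by first testing against $C_0^\infty$ functions on each edge to get the distributional equation and $H^2$ regularity ($v_j''=\omega v_j-p(\varphi_{k,j})^{p-1}v_j-w_j\in L^2(\mathbb R_+)$), then testing against general $\bb Z\in\EE$ to extract the single scalar condition $\sum_j v_j'(0)=\alpha v_1(0)$ — and you correctly observe that the constraint $z_1(0)=\dots=z_N(0)$ on test functions is precisely why only this one sum condition (not $N$ separate Neumann-type conditions) is forced. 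The paper instead splits $B^\alpha_{1,k}=B^\alpha+B_{1,k}$, handles the multiplication part trivially, and for the kinetic-plus-vertex part invokes extension theory: it shows the associated operator extends a minimal symmetric operator $\bb L^0$ with deficiency indices $(1,1)$, classifies all self-adjoint extensions via the von Neumann formula as a one-parameter family $\bb L^\beta$, and pins down $\beta=\alpha$ by matching quadratic forms on a vector with $\bb V(0)\neq\bb 0$. Your argument is more elementary and self-contained, avoiding the classification of extensions entirely; the paper's version buys consistency with the extension-theoretic machinery (deficiency indices, von Neumann decomposition) that it reuses repeatedly in Theorem \ref{spect_L^0_1} and the Appendix, and it sidesteps the explicit regularity step by exploiting maximality of self-adjoint operators. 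Both are complete proofs of the stated identification.
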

\begin{proof}
Since the proof for $\bb L^\alpha_{2,k}$ is similar to the one for  $\bb L^\alpha_{1,k}$, we  deal with $\bb L^\alpha_{1,k}$. Let
$B^\alpha_{1,k}= B^\alpha + B_{1,k}$, where $B^{\alpha}: \EE \times \EE\to \mathbb R$ and $ B_{1,k}:L^2(\mathcal{G}) \times L^2(\mathcal{G})\to \mathbb R$ are  defined by
\begin{equation*}\label{repres2}
\begin{split}
&B^{\alpha}(\bb U,\bb V)=\sum\limits_{j=1}^N\int\limits_{0}^\infty u_j'v_j'dx+\alpha  u_1(0)v_1(0),\\
& B_{1,k}(\bb U,\bb V)=\sum\limits_{j=1}^N\int\limits_{0}^\infty(\omega-p(\varphi_{k,j})^{p-1})u_jv_jdx.
\end{split}
\end{equation*}
We denote by $\bb L^\alpha$ (resp. $\bb L_{1,k}$) the self-adjoint operator on
$L^2(\mathcal{G})$  associated (by the First Representation Theorem) with $B^{\alpha}$ (resp. $B_{1,k}$). Thus,
\begin{equation*}\label{Q1}
  \begin{array}{ll}
  \bb L^\alpha\bb V=\bb W,\\
\dom(\bb L^\alpha)=\{\bb V\in \EE: \exists \bb W\in  L^2(\mathcal{G})\; s.t.\; \forall \bb Z\in \EE, \;B^{\alpha}(\bb V,\bb Z)=(\bb W,\bb Z)_2 \}.
\end{array}
  \end{equation*}
The operator  $\bb L^\alpha$ is the self-adjoint extension of the following symmetric operator
\begin{equation*}\label{A0}
 \begin{split}
 &\bb L^{0}\bb V=(-v''_j(x))_{j=1}^N, \\
 &\dom(\bb L^{0})=\left\{\mathbf{V}\in H^2(\mathcal{G}):  v_1(0)=...=v_N(0)=0, \sum\limits_{j=1}^N  v_j'(0)=0 \right\}.
 \end{split}
\end{equation*}
Indeed, initially we have $\bb L^{0}\subset \bb L^\alpha$. Let $\bb V\in \dom(\bb L^{0})$ and   $\bb W= (-v''_j(x))_{j=1}^N\in L^2(\mathcal{G})$. Then for every $\bb Z\in \EE$ we get $B^{\alpha}(\bb V,\bb Z)=(\bb W,\bb Z)_2$.
Thus,  $\bb V\in \dom(\bb L^\alpha)$ and $\bb L^\alpha\bb V=\bb W=(-v''_j(x))_{j=1}^N$,  which yields the claim.

Arguing as in the  proof of Theorem \ref{spect_L^0_1}$(iii)$, we can show that  the deficiency indices of $\bb L^0$ are given by  $n_\pm(\bb L^0)=1$.  Therefore, there exists one-parametric family of self-adjoint extensions of $\bb L^{0}$. Similarly to   \cite[Theorem 3.1.1]{AlbGes05}, we can prove that all self-adjoint extensions of $\bb L^{0}$ are given by
\begin{equation*}\label{beta}
\begin{split}
 & \bb L^\beta\bb V=(-v''_j(x))_{j=1}^N,  \\
 &\dom(\bb L^\beta)=\left\{
 \bb V\in H^2(\mathcal{G}):  v_1(0)=...=v_N(0), \quad
 \sum\limits_{j=1}^N  v_j'(0)=\beta v_1(0),\,\,\beta\in \mathbb R\right\}.
 \end{split}
 \end{equation*}
To show this we assume that $\bb L^0$ acts on complex-valued functions. Then due to \cite[Theorem A.1]{AlbGes05}, any self-adjoint extension  $\widehat{\bb L}$ of $\bb L^0$ is defined by
$$\dom(\widehat{\bb L})=\left\{\bb F=\bb F_0+c\bb F_i+ce^{i\theta}\bb F_{-i}:\,\bb F_0\in \dom(\bb L^0), c\in\mathbb{C},\theta\in[0,2\pi)\right\},$$
where   $\bb F_{\pm i}=\left(\tfrac{i}{\sqrt{\pm i}}e^{i\sqrt{\pm i}x}\right)_{j=1}^N,\,\,\Im(\sqrt{\pm i})>0$.
It is easily seen that  for $\bb F\in \dom(\widehat{\bb L})$ we have
$$\sum\limits_{j=1}^N(\bb F)'_j(0)=-Nc(1+e^{i\theta}),\quad (\bb F)_j(0)=c\left(e^{i\pi/4}+e^{i(\theta-\pi/4)}\right).$$
From the last equalities it follows that
$$\sum\limits_{j=1}^N(\bb F)'_j(0)=\beta(\bb F)_1(0),\,\, \text{where}\,\, \beta=\frac{-N(1+e^{i\theta})}{\left(e^{i\pi/4}+e^{i(\theta-\pi/4)}\right)}\in\mathbb{R},$$ which induces that $\dom(\widehat{\bb L})\subseteq \dom(\bb L^\beta).$
Using the fact that $\bb L^\beta$ defined on $\dom(\bb L^\beta)$ is self-adjoint,   we arrive at $\dom(\widehat{\bb L})=\dom(\bb L^\beta)$  for some $\beta\in \mathbb{R}$.

Finally, we need to  prove that $\beta=\alpha$. Take $\bb V\in \dom(\bb L^\alpha)$ with $\bb V(0)\neq \bb 0$, then
we obtain $(\bb L^\alpha\bb V,\bb V)_2=\sum\limits_{j=1}^N\int\limits_{0}^\infty(v_j')^2dx+\beta  (v_1(0))^2$,
which should be equal to $B^{\alpha}(\bb V,\bb V)=\sum\limits_{j=1}^N\int\limits_{0}^\infty(v_j')^2dx+\alpha  (v_1(0))^2$ for all $\bb{V}\in \EE$.
Therefore, $\beta=\alpha$.

Note that $\bb L_{1,k}$ is the
 self-adjoint extension of the following  multiplication operator
\begin{equation*}
  \bb L_{0,k} \bb V =\Big((\omega-p(\varphi_{k,j})^{p-1})v_j(x)\Big)_{j=1}^N , \quad
 \dom(\bb L_{0,k})=L^2(\mathcal{G}).
 \end{equation*}
 Indeed, for $\bb V\in\dom(\bb L_{0,k})$  we define $\bb W= \Big((\omega-p(\varphi_{k,j})^{p-1})v_j(x)\Big)_{j=1}^N \in L^2(\mathcal{G})$. Then for every $\bb Z\in L^2(\mathcal{G})$ we get $B_{1,k}(\bb V,\bb Z)=(\bb W, \bb Z)_2$. Thus,
$\bb V\in \dom(\bb L_{1,k})$ and $\bb L_{1,k}\bb V=\bb W=\Big((\omega-p(\varphi_{k,j})^{p-1})v_j(x)\Big)_{j=1}^N$. Hence, $\bb L_{0,k}\subseteq \bb L_{1,k}$.  Since $\bb L_{0,k}$ is self-adjoint, $\bb L_{1,k}= \bb L_{0,k}$.
The Theorem is proved.
\end{proof}
\begin{remark}
The above theorem is the generalization of \cite[Lemma 10]{CozFuk08} (for $\mathcal{G}=\mathbb{R}$).
\end{remark}
It is easily seen from \eqref{BBS} that formally  $ S''(\mathbf{\Phi}^{\alpha}_k)$ can be considered as a self-adjoint $2N\times 2N$ matrix  operator  (see \cite{GrilSha87, GrilSha90} for the details)
\begin{equation*}\label{form_L1+L2}
\bb H^\alpha_k:=\left(\begin{array}{cc} \bb L^\alpha_{1,k}& \bb 0 \\ \bb 0 & \bb  L^\alpha_{2,k}  \end{array}\right).\end{equation*}

Define
 \begin{equation}\label{p(omega)}
 p(\omega_0)=\left\{\begin{array}{ll}
 1 &\, \text{if}\ \partial_\omega||\mathbf{\Phi}_k||^2_2>0\ \text{at}\ \omega=\omega_0, \\
 0 & \, \text{if}\ \partial_\omega||\mathbf{\Phi}_k||^2_2<0\ \text{at}\ \omega=\omega_0.
  \end{array}\right.
\end{equation}
Having established \emph{Assumptions 1, 2} in \cite{GrilSha87}, i.e. well-posedness of the associated Cauchy problem (see Theorem \ref{loc_well_posed}) and the existence of $C^1$ in $\omega$  standing wave,  the  next stability/instability result follows  from \cite[Theorem 3]{GrilSha87} and \cite[Corollary 3 and 4]{oh}.

\begin{theorem}\label{stabil_graph}
 Let $\alpha\neq 0$,\, $k\in \left\{1,...,\left[\tfrac{N-1}{2}\right]\right\}$,\, $\omega>\tfrac{\alpha^2}{(N-2k)^2}$,  and $n(\bb H^\alpha_k)$ be the number of negative eigenvalues of $\bb H^\alpha_k$. Suppose also  that

 $1)$\,\,$\ker(\bb L^\alpha_{2,k})=\Span\{\mathbf{\Phi}_k\}$;

 $2)$\,\,$\ker(\bb L^\alpha_{1,k})=\{\bb 0\}$;

  $3)$\,\, the negative spectrum of $\bb L^\alpha_{1,k}$ and  $\bb L^\alpha_{2,k}$ consists of a finite number of negative eigenvalues (counting multiplicities);

  $4)$\,\, the rest of  the  spectrum of $\bb L^\alpha_{1,k}$ and  $\bb L^\alpha_{2,k}$ is positive and bounded away from zero.
  \
  Then the following assertions hold.
\begin{itemize}
  \item[$(i)$] If $n(\bb H^\alpha_k)=p(\omega)=1$ in $L^2_k(\mathcal{G})$, then   the standing wave $e^{i\omega t}\mathbf{\Phi}_k$  is orbitally  stable in $\mathcal{E}_k$.
   \item[$(ii)$]  If $n(\bb H^\alpha_k)-p(\omega)=1$ in $L^2_k(\mathcal{G})$, then   the standing wave $e^{i\omega t}\mathbf{\Phi}_k$  is orbitally  unstable in $\mathcal{E}_k$, and therefore in $\EE$.
\end{itemize}
\end{theorem}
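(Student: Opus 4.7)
The plan is to verify that the abstract framework of Grillakis--Shatah--Strauss \cite{GrilSha87, GrilSha90} together with Ohta's refinement \cite{oh} applies here, and then invoke those results directly. The NLS-$\delta$ flow on $\mathcal{G}$ admits a single one-parameter continuous symmetry, the phase rotation $T(\theta)\bb U = e^{i\theta}\bb U$, under which the mass $Q(\bb U) = \|\bb U\|_2^2$ is conserved (along with the energy $E$). The profile $\bb \Phi_k$ is a critical point of the action $S = E + \omega Q$ (as already checked in Subsection \ref{sub3.1}), and the block operator $\bb H^\alpha_k$ introduced immediately before \eqref{p(omega)} is precisely the Hessian $S''(\bb \Phi_k)$ expressed in real/imaginary coordinates.

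First I would check that the standard GSS hypotheses are available on the invariant subspace $\EE_k$. The Cauchy problem is well-posed in $\EE_k$, and is global in the relevant range of $p$, by Theorem \ref{loc_well_posed} combined with Lemma \ref{persistence_k} (and, when needed, Lemma \ref{glob_well}). The curve $\omega \mapsto \bb \Phi_k(\omega)$ is $C^1$ (indeed real-analytic) on the interval $\bigl(\tfrac{\alpha^2}{(N-2k)^2},\infty\bigr)$, as one reads off directly from the explicit formula \eqref{Phi_k}. Finally, the spectral assumptions 1)--4) are exactly the structural conditions the abstract theorems impose on the Hessian: $\ker(\bb L^\alpha_{2,k})=\Span\{\bb \Phi_k\}$ corresponds to the tangent direction of the phase orbit, $\ker(\bb L^\alpha_{1,k})=\{\bb 0\}$ is the nondegeneracy required for the implicit-function construction of the Lyapunov functional, and 3)--4) provide the spectral gap away from zero.

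Next I would introduce the standard function $d(\omega):=S(\bb \Phi_k(\omega))$, whose derivative is $d'(\omega)=\tfrac{1}{2}\|\bb \Phi_k(\omega)\|_2^2$, so that the sign of $d''(\omega)$ coincides with the sign of $\partial_\omega\|\bb \Phi_k\|_2^2$, encoded by $p(\omega)$ in \eqref{p(omega)}. For part $(i)$, the hypothesis $n(\bb H^\alpha_k)=p(\omega)=1$ on $L^2_k(\mathcal{G})$ places us exactly in the setting of \cite[Theorem 3]{GrilSha87}: the single negative direction of the Hessian is compensated by the convexity of $d$, a modified Lyapunov functional is coercive on a codimension-one submanifold transverse to the orbit $\{e^{i\theta}\bb \Phi_k\}$, and orbital stability in $\EE_k$ follows. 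For part $(ii)$, I would invoke Ohta's extension \cite[Corollaries 3 and 4]{oh}, which relaxes the classical GSS instability criterion to the condition that $n(\bb H^\alpha_k)-p(\omega)$ be odd; our hypothesis $n(\bb H^\alpha_k)-p(\omega)=1$ falls in this range, giving orbital instability in $\EE_k$. Since $\EE_k$ is a closed subspace of $\EE$ and the NLS-$\delta$ flow preserves $\EE_k$ by Lemma \ref{persistence_k}, any sequence of initial data in $\EE_k$ producing instability in $\EE_k$ automatically produces instability in $\EE$ as well.

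The main obstacle is the careful interpretation of the Morse index $n(\bb H^\alpha_k)$ and of the slope function $p(\omega)$ after the restriction to $L^2_k(\mathcal{G})$: the reduced spectral problem may count fewer negative directions than the full one on $L^2(\mathcal{G})$, so one must verify that the orthogonal decomposition of $L^2(\mathcal{G})$ induced by the permutation symmetry commutes with $\bb L^\alpha_{1,k}$, $\bb L^\alpha_{2,k}$, and with the nonlinear flow (via Lemma \ref{persistence_k}), so that the entire GSS/Ohta machinery may be applied coherently on the invariant component $\EE_k$. Once this consistency check is in place, the stability dichotomy is a direct quotation of the abstract results.
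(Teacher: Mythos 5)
Your proposal is essentially the paper's own argument: the paper gives no separate proof of this theorem but derives it directly from \cite[Theorem 3]{GrilSha87} and \cite[Corollaries 3 and 4]{oh} after checking Assumptions 1 and 2 of \cite{GrilSha87} (well-posedness on the invariant subspace $\EE_k$ via Theorem \ref{loc_well_posed} and Lemma \ref{persistence_k}, and $C^1$ dependence of $\bb\Phi_k$ on $\omega$), exactly as you do. Your verification of hypotheses, the role of $d(\omega)$, and the passage from instability in $\EE_k$ to instability in $\EE$ all match.

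One caveat on the instability part: you describe Ohta's result as relaxing the GSS criterion to ``$n(\bb H^\alpha_k)-p(\omega)$ odd,'' but that odd-parity condition is the criterion of \cite{GrilSha90}, and it yields only \emph{spectral} instability unless the semigroup estimate (6.2) of that paper is verified --- which, as Remark \ref{nonstab} notes, is not available here. The actual division of labor is: when $n(\bb H^\alpha_k)=1$ and $p(\omega)=0$ the original instability theorem of \cite{GrilSha87} applies, while the case $n(\bb H^\alpha_k)=2$, $p(\omega)=1$ (which occurs for $\alpha<0$) requires Ohta's Corollaries 3 and 4, whose virtue is precisely that they give \emph{orbital} instability without passing through spectral instability. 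Since the theorem only asserts the conclusion for $n-p=1$, both sub-cases are covered and your conclusion stands, but the justification should distinguish these two mechanisms rather than attribute a general odd-parity orbital-instability criterion to Ohta.
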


\begin{remark}\label{nonstab} The instability part of the above  Theorem is a  very delicate point  worth to be commented.
\begin{enumerate}
\item[$(i)$] It is  known from \cite{GrilSha90}  that when $n(\bb H^{\alpha}_k)-p(\omega)$ is odd, we obtain only spectral instability of $e^{i\omega t}\mathbf{\Phi}_k$. To obtain orbital instability due to \cite[Theorem 6.1]{GrilSha90}, it is sufficient to show estimate (6.2) in   \cite{GrilSha90} for the semigroup  $e^{t\bb A_{\alpha,k}}$ generated by
$$
\bb A_{\alpha,k}=\left(\begin{array}{cc} \bb 0& \bb L^\alpha_{2,k} \\ -\bb L^\alpha_{1,k} & \bb 0 \end{array}\right).
$$
In  our particular case it is not clear how to prove estimate (6.2).

\item[$(ii)$] In the case $n(\bb H^\alpha_k)=2$ (which happens in $L^2_k(\mathcal{G})$ for  $\alpha<0$)
we can apply the results by  Ohta \cite[Corollary 3 and 4]{oh} to get the instability part of the above Theorem.    We note that in this case the orbital instability follows without using spectral instability.

\item[$(iii)$] Generally,  to imply the orbital instability from the spectral one, the approach by    \cite{HenPer82} can be used (see Theorem 2).  The key point of this method is the use of the fact that  the mapping data-solution associated to the NLS-$\delta$ model is  of class $C^2$ as $p>2$ (see Theorem \ref{loc_well_posed}). For applications of the  approach by  \cite{HenPer82} to the models of KdV-type see \cite{AngLop08} and  \cite{AngNat16}.
\end{enumerate}
\end{remark}

\subsection{Spectral properties of $\bb L^\alpha_{1,k}$ and $\bb L^\alpha_{2,k}$}\label{sub3.2}
Below we  describe  the spectra of the operators $\mathbf{L}_{1,k}^\alpha$ and $\mathbf{L}_{2,k}^\alpha$  which will help us to verify the conditions of Theorem \ref{stabil_graph}. Our ideas are based on the extension theory of symmetric operators and the perturbation theory.

 The main result of this subsection is the following.

\begin{theorem}\label{7main}  Let $\alpha \neq 0$, $k\in\left\{1,...,\left[\tfrac{N-1}{2}\right]\right\}$ and  $\omega>\tfrac{\alpha^2}{(N-2k)^2}$. Then the following assertions hold.
\begin{enumerate}
\item[(i)]  If $\alpha<0$,  then $n(\bb H_k^\alpha)=2$  in  $L^2_k(\mathcal G)$, i.e. $n(\bb H_k^\alpha|_{L^2_k(\mathcal G)})=2$.

\item[(ii)]  If  $\alpha>0$, then $n(\bb H_k^\alpha)=1$  in  $L^2_k(\mathcal G)$, i.e. $n(\bb H_k^\alpha|_{L^2_k(\mathcal G)})=1$.
\end{enumerate}
\end{theorem}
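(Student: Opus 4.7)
The plan is to analyze the two diagonal blocks of $\bb H_k^\alpha$ separately on the invariant subspace $L^2_k(\mathcal G)$, which is reducing for both $\bb L^\alpha_{1,k}$ and $\bb L^\alpha_{2,k}$ because the potentials $\varphi_{k,j}^\alpha$ coincide within each of the two groups $j\leq k$ and $j>k$. I would combine analytic perturbation theory in $\alpha$, taking the Kirchhoff case $\alpha=0$ as base point, with a direct ODE analysis of the zero-eigenvalue equation to rule out further crossings as $(\alpha,\omega)$ varies in the admissible region.

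For $\bb L^\alpha_{2,k}$, the vector $\bb\Phi_k$ lies in the kernel for every admissible $\alpha$ by \eqref{H_alpha}. I would first check that $\ker(\bb L^\alpha_{2,k}|_{L^2_k})$ is one-dimensional: writing any element as $(f,\ldots,f,g,\ldots,g)$, each scalar ODE $-h''+(\omega-\varphi_\omega^{p-1}(\cdot\mp x_k))h=0$ on $\mathbb R_+$ (with $x_k=2a_k/((p-1)\sqrt\omega)$) admits only the one-dimensional $L^2$-subspace of decaying solutions spanned by $\varphi_\omega(\cdot\mp x_k)$, so the two coupling conditions leave only a one-parameter family, necessarily proportional to $\bb\Phi_k$. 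Since the zero eigenvalue is simple and persists, Kato's analytic perturbation theory forces the Morse index to be locally constant in $\alpha$; I would compute it at $\alpha=0$ via the decomposition $L^2_k=L^2_{\eq}\oplus(L^2_k\cap L^2_{\eq}^\perp)$, which diagonalises the operator into the Neumann/Dirichlet restrictions of $B_R=-d^2/dx^2+\omega-\varphi_\omega^{p-1}$ on $L^2(\mathbb R_+)$. Since $\varphi_\omega>0$ is a nodeless ground state of $B_R$ at eigenvalue $0$, its Neumann restriction has no negative eigenvalues and its Dirichlet restriction (the odd part) has none either, giving $n(\bb L^\alpha_{2,k}|_{L^2_k})=0$ throughout.

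For $\bb L^\alpha_{1,k}$, the same decomposition at $\alpha=0$ reduces the restriction to $L^2_k$ to $A_N\oplus A_D$ with $A=-d^2/dx^2+\omega-p\varphi_\omega^{p-1}$. The standard linearized NLS operator $A_R$ on the line has exactly one (even) negative eigenvalue and simple kernel spanned by the odd function $\varphi_\omega'$, so $n(A_N)=1$, $\ker A_N=\{0\}$, $n(A_D)=0$, $\ker A_D=\Span\{\varphi_\omega'\}$. Hence $n(\bb L^0_{1,k}|_{L^2_k})=1$ with a one-dimensional kernel spanned by an ``antisymmetric-in-$k$'' profile proportional to $\varphi_\omega'$ on every edge. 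The key step is then to track the analytic eigenvalue branch $\lambda(\alpha)$ with $\lambda(0)=0$ via Hellmann--Feynman: $\lambda'(0)=\partial_\alpha B^\alpha_{1,k}(\bb\Psi_0,\bb\Psi_0)|_{\alpha=0}$. The direct boundary contribution $\alpha|v_1(0)|^2$ vanishes at first order because $\varphi_\omega'(0)=0$, so $\lambda'(0)$ is extracted entirely from the implicit $\alpha$-dependence of $\varphi_{k,j}^\alpha$ through $a_k(\alpha)$. A careful bookkeeping (using $\partial_\alpha\varphi_{k,j}^\alpha|_0=\pm\frac{2\varphi_\omega'}{(p-1)\omega(N-2k)}$ with opposite signs on the two groups, the strict negativity of $\int_0^\infty\varphi_\omega^{p-2}(\varphi_\omega')^3\,dx$ on $(0,\infty)$, and the positive combinatorial factor $k(N-k)(N-2k)$) yields $\lambda'(0)>0$. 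Consequently $\lambda(\alpha)>0$ for small $\alpha>0$ and $\lambda(\alpha)<0$ for small $\alpha<0$, giving locally $n=1$ for $\alpha>0$ small and $n=2$ for $\alpha<0$ small.

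To propagate these local values to the entire admissible region for each sign of $\alpha$, I would show that the kernel of $\bb L^\alpha_{1,k}|_{L^2_k}$ is trivial whenever $\alpha\neq 0$ and $\omega>\alpha^2/(N-2k)^2$. Any kernel element must take the form $(c_-\varphi_\omega'(\cdot-x_k),c_+\varphi_\omega'(\cdot+x_k))$; the continuity and jump conditions force $c_-+c_+=0$ together with $(N-2k)\varphi_\omega''(x_k)=\alpha\varphi_\omega'(x_k)$. Using $\varphi_\omega''=\omega\varphi_\omega-\varphi_\omega^p$ and the defining identity $\tanh(a_k)=\alpha/((2k-N)\sqrt\omega)$, the last relation reduces algebraically to $\omega=\alpha^2/(N-2k)^2$, i.e.\ precisely the excluded boundary. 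Hence no eigenvalue of $\bb L^\alpha_{1,k}|_{L^2_k}$ touches zero on either connected interval $(-(N-2k)\sqrt\omega,0)$ or $(0,(N-2k)\sqrt\omega)$, so the Morse index is constant on each and equal to the values obtained near $\alpha=0$. Summing then gives $n(\bb H_k^\alpha|_{L^2_k})=2$ for $\alpha<0$ and $1$ for $\alpha>0$. The hardest part will be the perturbation calculation: because $\varphi_\omega'(0)=0$ kills the leading boundary term, the sign of $\lambda'(0)$ emerges only after tracking several competing signs (the derivative $\partial_\alpha x_k$, the integral of $\varphi_\omega^{p-2}(\varphi_\omega')^3$, and the antisymmetric combinatorial factor) simultaneously.
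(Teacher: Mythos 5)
Your proposal is correct, and its central mechanism coincides with the paper's: real-analyticity of $\alpha\mapsto\bb L^\alpha_{1,k}$ (Lemma \ref{analici}), a first-order eigenvalue computation at the Kirchhoff base point showing $\lambda_k'(0)>0$ (Proposition \ref{signeigen} --- your Hellmann--Feynman bookkeeping, including the vanishing of the boundary term because $\varphi_0'(0)=0$ and the sign of $\int_0^\infty(\varphi_0')^3\varphi_0^{p-2}\,dx$, is exactly the computation in \eqref{1}--\eqref{3}), and a continuation argument resting on $\ker(\bb L^\alpha_{1,k})=\{\bb 0\}$ for all admissible parameters (Propositions \ref{grafoN2}$(ii)$ and \ref{n(L_1)}; the paper phrases this with Riesz projections, you phrase it as absence of zero crossings on each connected $\alpha$-interval --- the same argument). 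Two auxiliary steps differ. For the base point, the paper obtains $n(\bb L_1^0|_{L^2_k(\mathcal G)})=1$ by extension theory (the minimal operator with Dirichlet-type vertex data is non-negative with deficiency indices $(1,1)$, hence any self-adjoint extension has at most one negative eigenvalue, and $(\bb L_1^0\bb\Phi_0,\bb\Phi_0)_2<0$ supplies it); your symmetric/antisymmetric splitting of $L^2_k(\mathcal G)$ into Neumann and Dirichlet half-line problems is more elementary and delivers the kernel \eqref{Psi_0} for free, at the cost of being tied to the equal-potential structure present only at $\alpha=0$. For $\bb L^\alpha_{2,k}$, the paper proves $\bb L^\alpha_{2,k}\geq 0$ directly for every admissible $\alpha$ via the factorization identity and an exact cancellation of the vertex boundary terms (Proposition \ref{grafoN2}$(i)$), whereas you run a second continuation from $\alpha=0$; both are valid, but the direct argument is pointwise in $\alpha$ and needs no connectedness of the parameter region, while your route needs (and you correctly supply) the fact that the kernel stays exactly one-dimensional, so that no second eigenvalue can reach zero.
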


Theorem \ref{7main} is an immediate consequence of  Propositions \ref{grafoN2} and \ref{n(L_1)} below.

 \begin{proposition}\label{grafoN2}
Let $\alpha\neq 0$,   $k\in\left\{1,...,\left[\tfrac{N-1}{2}\right]\right\}$ and  $\omega>\tfrac{\alpha^2}{(N-2k)^2}$. Then the following assertions hold.
\begin{itemize}
\item[$(i)$]   $\ker(\mathbf{L}^\alpha_{2,k})=\Span\{\mathbf{\Phi}_{k}\}$ and $\mathbf{L}^\alpha_{2,k}\geq 0$.
 \item[$(ii)$] $\ker(\mathbf{L}^\alpha_{1,k})=\{\mathbf{0}\}$.
  \item[$(iii)$] The positive part of the spectrum of the operators $\mathbf{L}^\alpha_{1,k}$ and $\mathbf{L}^\alpha_{2,k}$  is bounded away from zero, and $\sigma_{\ess}(\mathbf{L}^\alpha_{1,k})=\sigma_{\ess}(\mathbf{L}^\alpha_{2,k})=[\omega,\infty).$
  \end{itemize}
\end{proposition}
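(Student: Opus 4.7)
For (iii), the essential spectrum claim follows from Weyl's theorem: $\sigma_{\ess}(\mathbf{H}^\alpha_\delta)=[0,\infty)$ on the star graph is classical (see \cite{AlbGes05, AngGol16}), and $\mathbf{L}^\alpha_{1,k}$, $\mathbf{L}^\alpha_{2,k}$ differ from $\mathbf{H}^\alpha_\delta+\omega I$ by diagonal multiplications with entries proportional to $\varphi_{k,j}^{p-1}$ on each edge, which decay exponentially. A standard Birman--Schwinger-type argument using the resolvent representation \eqref{res0}--\eqref{c_tilde_delta} shows these multiplications are relatively compact with respect to $\mathbf{H}^\alpha_\delta$, hence $\sigma_{\ess}(\mathbf{L}^\alpha_{l,k})=[\omega,\infty)$ for $l\in\{1,2\}$, and each operator has only finitely many eigenvalues (counting multiplicities) below $\omega$.

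For part (i), $\mathbf{L}^\alpha_{2,k}\mathbf{\Phi}_k=0$ is immediate from \eqref{H_alpha}. To prove simultaneously $\mathbf{L}^\alpha_{2,k}\geq 0$ and $\ker(\mathbf{L}^\alpha_{2,k})=\Span\{\mathbf{\Phi}_k\}$, I would use the substitution $v_j=f_j\varphi_{k,j}$ for $\mathbf{V}=(v_j)\in\EE$ (justified because $\varphi_{k,j}>0$ is smooth). Edgewise integration by parts, combined with the ODE $-\varphi_{k,j}''+(\omega-\varphi_{k,j}^{p-1})\varphi_{k,j}=0$, yields
\begin{equation*}
\int_0^\infty\!\Big[(v_j')^2+(\omega-\varphi_{k,j}^{p-1})v_j^2\Big]dx=\int_0^\infty (f_j')^2\varphi_{k,j}^2\,dx-f_j(0)^2\,\varphi_{k,j}(0)\varphi_{k,j}'(0).
\end{equation*}
Since $\varphi_{k,j}(0)$ is independent of $j$ (by evenness of the underlying soliton), the continuity condition at the vertex forces $f_1(0)=\cdots=f_N(0)=:f_0$; summing over $j$ and invoking the $\delta$-condition $\sum_j\varphi_{k,j}'(0)=\alpha\varphi_{k,1}(0)$, the boundary terms cancel exactly against the $\alpha v_1(0)^2$ contribution in $B^\alpha_{2,k}(\mathbf{V},\mathbf{V})$. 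Hence $B^\alpha_{2,k}(\mathbf{V},\mathbf{V})=\sum_j\int_0^\infty (f_j')^2\varphi_{k,j}^2\,dx\geq 0$, with equality if and only if each $f_j$ is constant; combined with continuity this forces $\mathbf{V}\in\Span\{\mathbf{\Phi}_k\}$.

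For part (ii), I would analyze $\ker(\mathbf{L}^\alpha_{1,k})$ edgewise. Any $\mathbf{U}=(u_j)$ in the kernel satisfies $-u_j''+(\omega-p\varphi_{k,j}^{p-1})u_j=0$ on each half-line; differentiating the stationary equation for $\varphi_{k,j}$ exhibits $\varphi_{k,j}'$ as an $L^2(\mathbb{R}_+)$ solution, and since the potential tends to $\omega>0$ at infinity a second linearly independent $L^2$ solution cannot exist, so $u_j=C_j\varphi_{k,j}'$. Writing $\varphi_{k,j}(x)=\phi(\eta x\mp a_k)$ with $\eta=\tfrac{(p-1)\sqrt{\omega}}{2}$ and $\phi$ the canonical soliton on the line, one has $\varphi_{k,j}'(0)=\pm\eta\phi'(a_k)$ (sign depending on the bump/tail character of edge $j$) and $\varphi_{k,j}''(0)=\eta^2\phi''(a_k)$ (by parity of $\phi$). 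Continuity determines the $C_j$ up to one free parameter, and the $\delta$-condition, after invoking the ODE identity $\phi''(a_k)=\eta^{-2}(\omega-\phi^{p-1}(a_k))\phi(a_k)$ together with $\tanh(a_k)=\alpha/((2k-N)\sqrt{\omega})$, collapses to the scalar equation $\omega=\alpha^2/(N-2k)^2$. This is ruled out by the strict inequality in the hypothesis, so the free parameter must vanish and $\ker(\mathbf{L}^\alpha_{1,k})=\{\mathbf{0}\}$.

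The remaining assertion in (iii) (positive spectrum bounded away from zero) follows by assembling the previous steps: below $\omega>0$ each operator has only finitely many isolated eigenvalues, and by the characterizations of the kernels the only eigenvalue at $0$ is for $\mathbf{L}^\alpha_{2,k}$, so the distance from $0$ to the next positive eigenvalue (or to $\omega$, whichever is smaller) is strictly positive. The main technical obstacle I foresee is the justification of the substitution $v_j=f_j\varphi_{k,j}$ in part (i) for \emph{all} $\mathbf{V}\in\EE$, which I would handle by approximation with test vectors of compact support lying in a dense subspace of $\EE$, exploiting the strict positivity and smoothness of each $\varphi_{k,j}$ on $\mathbb{R}_+$ to ensure $f_j\in H^1_{\mathrm{loc}}$ with the boundary traces needed for the integration by parts.
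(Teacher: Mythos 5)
Your proposal is correct and follows essentially the same route as the paper: the quadratic-form factorization $v_j=f_j\varphi_{k,j}$ with cancellation of the boundary terms against $\alpha v_1(0)^2$ for part $(i)$, the edgewise identification $u_j=C_j\varphi_{k,j}'$ reduced by the vertex conditions to the excluded equality $\omega=\alpha^2/(N-2k)^2$ for part $(ii)$, and Weyl's theorem plus exponential decay of the potential for part $(iii)$. The only cosmetic differences are that you work directly with the form $B^\alpha_{2,k}$ on $\EE$ rather than with $(\mathbf{L}^\alpha_{2,k}\mathbf{V},\mathbf{V})_2$ on $D_\alpha$, and that you spell out the relative-compactness justification the paper leaves to a citation.
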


\begin{proof}
$(i)$\, It is obvious that  $\mathbf{\Phi}_k\in\ker(\mathbf{L}^\alpha_{2,k})$. To show the equality  $\ker(\mathbf{L}^\alpha_{2,k})=\Span\{\mathbf{\Phi}_k\}$  let us note that  any $\mathbf{V}=(v_j)_{j=1}^N\in H^2(\mathcal{G})$ satisfies the following identity
\begin{equation*}\label{identity_graph'}
-v''_j + \omega v_j-\varphi_{k,j}^{p-1}v_j=
\frac{-1}{\varphi_{k,j}}\frac{d}{dx}\left[\varphi_{k,j}^2\frac{d}{dx}\left(\frac{v_j}{\varphi_{k,j}}\right)\right],\quad x>0.
\end{equation*}
Thus, for $\mathbf{V}\in D_\alpha$ we obtain
\begin{equation*}\label{nonneg_graph'}
\begin{split}
&(\mathbf{L}^\alpha_{2,k}\mathbf{V},\mathbf{V})_2=
\sum\limits_{j=1}^N\int\limits^{\infty}_{0}(\varphi_{k,j})^2\left[\frac{d}{dx}\left(\frac{v_j}{\varphi_{k,j}}\right)\right]^2dx+
\sum\limits_{j=1}^N\left[-v_j'{v}_j+v_j^2\frac{(\varphi_{k,j})'}{\varphi_{k,j}}\right]^{\infty}_{0}\\&=\sum\limits_{j=1}^N\int\limits^{\infty}_{0}(\varphi_{k,j})^2\left[\frac{d}{dx}\left(\frac{v_j}{\varphi_{k,j}}\right)\right]^2dx+\sum\limits_{j=1}^N\left[v_j'(0){v_j(0)}-v_j^2(0)\frac{\varphi_{k,j}'(0)}{\varphi_{k,j}(0)}\right].
\end{split}
\end{equation*}
Using boundary conditions \eqref{D_alpha}, we get
\begin{equation*}
\begin{split}
&\sum\limits_{j=1}^N\left[v_j'(0){v_j(0)}-v_j^2(0)\frac{\varphi_{k,j}'(0)}{\varphi_{k,j}(0)}\right]\\
&=\alpha v_1^2(0)+\sqrt{\omega}v_1^2(0)\left[\sum\limits_{j=1}^k\tanh(-a_k)+\sum\limits_{j=k+1}^N\tanh(a_k)\right]\\&=\alpha v_1^2(0)+\sqrt{\omega}v_1^2(0)(N-2k)\frac{\alpha}{(2k-N)\sqrt{\omega}}=0,
\end{split}
\end{equation*}
which induces $(\mathbf{L}^\alpha_{2,k}\bb V, \bb V)_2> 0$ for $\bb V\in D_\alpha\setminus\Span\{\mathbf{\Phi}_k\}$. Therefore, $\ker(\mathbf{L}^\alpha_{2,k})=\Span\{\mathbf{\Phi}_k\}$.

$(ii)$\, Concerning the kernel of $\mathbf{L}^\alpha_{1,k}$,  the only $L^2(\mathbb{R}_+)$-solution of the equation
$$
 -v''_j+\omega v_j-p\varphi_{k,j}^{p-1}v_j=0
 $$
 is $v_j=\varphi_{k,j}'$ up to a factor. Thus, any element of $\ker(\mathbf{L}^\alpha_{1,k})$ has the form $\mathbf{V}=(v_j)_{j=1}^N=(c_j\varphi_{k,j}')_{j=1}^N,\, c_j\in\mathbb{R}$. Continuity condition $v_1(0)=...= v_N(0)$ induces that $c_1=...=c_N$, i.e.
 \begin{equation*}\label{kernel}
  v_j(x)=c\left\{\begin{array}{ll}
                     -\varphi_{k,j}', & \quad\hbox{$j=1,...,k$;} \\
                     \varphi_{k,j}', & \quad\hbox{$j=k+1,...,N$}
                    \end{array}
                  \right., \quad c\in\mathbb{R}.
                  \end{equation*}
                  Condition $\sum\limits_{j=1}^Nv_j'(0)=\alpha v_j(0)$ is equivalent to the equality $$c\left(\tfrac{\omega(1-p)}{2}+\tfrac{p-1}{2}\tfrac{\alpha^2}{(N-2k)^2}\right)=0.$$ The last one induces that either  $\omega=\tfrac{\alpha^2}{(N-2k)^2}$ (which is impossible) or $c=0$, and therefore  $\mathbf{V}\equiv\bb 0$.

$(iii)$ By Weyl's theorem (see \cite[Theorem XIII.14]{RS}) the essential spectrum of $\mathbf{L}^\alpha_{1,k}$ and $\mathbf{L}^\alpha_{2,k}$ coincides with $[\omega,\infty).$ Since  $\bb\Phi_k\in L^\infty(\mathcal{G})$ and $\bb \Phi_k(x)\to \bb 0$ as $x\to +\infty$, there can be only finitely many isolated eigenvalues  in $(-\infty, \omega')$ for any $\omega'<\omega$. Then $(iii)$ follows easily.
\end{proof}

 Below using the perturbation theory we will study
 $n(\mathbf{L}_{1,k}^\alpha)$ in the space $L^2_k(\mathcal{G})$ for any $k\in\{1,...,\left[\tfrac{N-1}{2}\right]\}$.
 For this purpose let us define the following  self-adjoint matrix Schr\"odinger operator on $L^2(\mathcal{G})$ with  the Kirchhoff condition at $\nu=0$
 \begin{equation}\label{L^0_1}
 \begin{split}
 &\bb L^0_1=\left(\Big(-\frac{d^2}{dx^2}+\omega-p\varphi_0^{p-1}\Big)\delta_{i,j}\right), \\
 &\dom(\bb L^0_1)=\left\{\mathbf{V}\in H^2(\mathcal{G}): v_1(0)=...=v_N(0),\,\,\sum\limits_{j=1}^N  v_j'(0)=0\right\},
 \end{split}
 \end{equation}
 where $\varphi_0$ represents the half-soliton solution for the classical NLS model,
 \begin{equation}\label{var_0}
 \varphi_0(x)=\left[\frac{(p+1)\omega}{2} \sech^2\left(\frac{(p-1)\sqrt{\omega}}{2}x \right)\right]^{\frac{1}{p-1}}.
  \end{equation}
From definition of the profiles $\bb \Phi^\alpha_{k}$ in \eqref{Phi_k} it follows
 \begin{equation*}\label{converg}
 \bb \Phi^\alpha_{k}\to\bb \Phi_0,\;\; \text{as}\;\; \alpha\to 0,\;\;\text{in}\;\; H^1(\mathcal G),
 \end{equation*}
 where $\bb \Phi_0=(\varphi_0)_{j=1}^N$.
 As we intend to study negative spectrum of $\mathbf{L}^\alpha_{1,k}$, we first need to describe spectral properties of  $\bb L^0_1$ (which is ``limit value" of $\mathbf{L}^\alpha_{1,k}$ as $\alpha\to 0$).

  \begin{theorem}\label{spect_L^0_1}
 Let $\bb L^0_1$ be defined by \eqref{L^0_1} and $k\in\left\{1,...,\left[\tfrac{N-1}{2}\right]\right\}$. Then the assertions below hold.
\begin{itemize}
  \item[$(i)$] $\ker(\bb L^0_1)=\Span\{\hat{\bb\Phi}_{0,1},...,\hat{\bb\Phi}_{0,N-1}\}$, where
   \begin{equation*}\label{Psi_j}
\hat{\mathbf{\Phi}}_{0,j}=(0,...,0,\underset{\bf j}{\varphi'_{0}},\underset{\bf j+1}{-\varphi'_{0}},0,...,0).
 \end{equation*}
  \item[$(ii)$] In the space  $L^2_k(\mathcal{G})$ we have $\ker(\bb L^0_1)=\Span\{\mathbf{\widetilde{\Phi}}_{0,k}\}$, i.e. $\ker(\bb L^0_1|_{L^2_k(\mathcal{G})})=\Span\{\mathbf{\widetilde{\Phi}}_{0,k}\}$, where
  \begin{equation}\label{Psi_0}
\mathbf{\widetilde{\Phi}}_{0,k}=\left(\underset{\bf 1}{\tfrac{N-k}{k}\varphi'_{0}},..., \underset{\bf k}{\tfrac{N-k}{k}\varphi'_{0}},\underset{\bf k+1}{-\varphi'_{0}},...,\underset{\bf N}{-\varphi'_{0}}\right).
 \end{equation}
  \item[$(iii)$]  The operator $\bb L^0_1$ has one simple negative eigenvalue in $L^2(\mathcal{G})$, i.e. $n(\bb L^0_1)=1$. Moreover, $\bb L^0_1$ has one simple negative eigenvalue in $L^2_k(\mathcal{G})$ for any $k$, i.e.  $n(\bb L^0_1|_{L^2_k(\mathcal{G})})=1$.
   \item[$(iv)$] The positive part of the spectrum of $\bb L^0_1$  is bounded away from zero, and $\sigma_{\ess}(\mathbf{L}^0_{1})=[\omega, \infty)$.
  \end{itemize}
\end{theorem}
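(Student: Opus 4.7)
The plan is to diagonalize $\bb L_1^0$ via the orthogonal decomposition $L^2(\mathcal{G}) = L^2_{\eq}(\mathcal{G}) \oplus W$, where
$$W := \Big\{\bb V=(v_j)_{j=1}^N \in L^2(\mathcal{G}) : \sum_{j=1}^N v_j \equiv 0\Big\},$$
and reduce everything to the well-known scalar spectral theory of the linearization about the NLS soliton $\varphi_0$.

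For parts (i) and (ii) I would solve $\bb L_1^0 \bb V = 0$ componentwise: each $v_j$ satisfies $-v_j'' + \omega v_j - p\varphi_0^{p-1} v_j = 0$, whose only $L^2(\mathbb{R}_+)$-solution up to scaling is $\varphi_0'$ (obtained by differentiating the soliton equation). Hence $v_j = c_j \varphi_0'$. Since $\varphi_0$ extends to an even function on $\mathbb{R}$, $\varphi_0'(0) = 0$ and the continuity condition $v_1(0) = \ldots = v_N(0)$ is automatic, while differentiating $-\varphi_0'' + \omega\varphi_0 - \varphi_0^p = 0$ at $x=0$ gives $\varphi_0''(0)\neq 0$, so the Kirchhoff relation $\sum v_j'(0)=0$ reduces to $\sum_{j=1}^N c_j = 0$. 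This produces an $(N-1)$-dimensional kernel with basis $\{\hat{\bb\Phi}_{0,j}\}_{j=1}^{N-1}$, establishing (i); intersecting with $L^2_k(\mathcal{G})$ imposes $c_1=\ldots=c_k$ and $c_{k+1}=\ldots=c_N$, and together with $\sum c_j=0$ this leaves precisely the one-dimensional span of $\widetilde{\bb\Phi}_{0,k}$, establishing (ii).

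For (iii) I would verify that both $L^2_{\eq}(\mathcal{G})$ and $W$ are invariant for $\bb L_1^0$ and that $\dom(\bb L_1^0)$ splits accordingly. On $L^2_{\eq}(\mathcal{G})$, a vector $\bb V=(v,\ldots,v)$ satisfies Kirchhoff iff $Nv'(0)=0$, so $\bb L_1^0|_{L^2_{\eq}}$ is unitarily equivalent to the scalar operator $L^N := -d^2/dx^2 + \omega - p\varphi_0^{p-1}$ on $L^2(\mathbb{R}_+)$ with Neumann condition. On $W$, the conditions $v_1(0)=\ldots=v_N(0)$ and $\sum v_j(0)=0$ force $v_j(0)=0$ for every $j$, while the Kirchhoff relation becomes automatic (it is the value at $0$ of $\partial_x \sum v_j\equiv 0$); thus $\bb L_1^0|_W$ consists of $N$ copies of the Dirichlet operator $L^D := -d^2/dx^2 + \omega - p\varphi_0^{p-1}$ on $L^2(\mathbb{R}_+)$ with $v(0)=0$, restricted to the codimension-one constraint $\sum v_j=0$. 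Identifying $L^2(\mathbb{R}_+)$ with Neumann (resp.\ Dirichlet) condition with the even (resp.\ odd) subspace of $L^2(\mathbb{R})$ by reflection, and invoking the classical Weinstein analysis of the scalar operator on $\mathbb{R}$ (one simple negative eigenvalue with even eigenfunction, a simple zero eigenvalue spanned by the odd function $\varphi_0'$), I conclude $n(L^N)=1$ with trivial kernel and $n(L^D)=0$ with one-dimensional kernel $\Span\{\varphi_0'\}$. Summing over the decomposition yields $n(\bb L_1^0) = 1+0 = 1$ on $L^2(\mathcal{G})$; applying the same argument to $L^2_k(\mathcal{G})=L^2_{\eq}\oplus(W\cap L^2_k)$, where $W\cap L^2_k$ is one-dimensional over $L^2(\mathbb{R}_+)$, gives $n(\bb L_1^0|_{L^2_k})=1$.

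Finally, (iv) follows from Weyl's theorem: since $p\varphi_0^{p-1}$ decays exponentially, it is a relatively compact perturbation of $(-d^2/dx^2+\omega)\otimes I$ with Kirchhoff conditions (whose spectrum is $[\omega,\infty)$), so $\sigma_{\ess}(\bb L_1^0)=[\omega,\infty)$; combined with the finiteness and explicit description of the kernel from (i)--(ii), the positive part of the spectrum is bounded away from zero. The delicate step I expect is the careful verification that the Kirchhoff domain splits cleanly along $L^2_{\eq}\oplus W$ into the Neumann and Dirichlet pieces above; once this is in place, the counts reduce to Weinstein's scalar results.
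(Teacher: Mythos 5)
Your proposal is correct, and parts (i), (ii), (iv) coincide with the paper's argument (componentwise solution of the kernel equation plus Weyl's theorem). For part (iii), however, you take a genuinely different route. The paper stays inside the extension-theory framework it builds throughout: it restricts $\bb L_1^0$ to the symmetric operator $\bb L_0^0$ obtained by imposing the additional Dirichlet conditions $v_1(0)=\dots=v_N(0)=0$, proves non-negativity of $\bb L_0^0$ via the factorization identity $-v''+\omega v-p\varphi_0^{p-1}v=-(\varphi_0')^{-1}\bigl[(\varphi_0')^2(v/\varphi_0')'\bigr]'$, computes the adjoint and the deficiency indices $n_\pm(\bb L_0^0)=1$, and then invokes the general principle (Proposition \ref{semibounded}) that a self-adjoint extension of a non-negative symmetric operator with deficiency indices $n$ has at most $n$ negative eigenvalues; the test function $\bb\Phi_0$ with $(\bb L_1^0\bb\Phi_0,\bb\Phi_0)_2<0$ then pins down $n(\bb L_1^0)=1$. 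You instead exploit the permutation symmetry of the Kirchhoff vertex to split $L^2(\mathcal{G})=L^2_{\eq}(\mathcal{G})\oplus W$ into invariant subspaces on which the domain decomposes into a Neumann and a Dirichlet half-line problem, and you import the classical scalar facts about $L_+=-\partial_x^2+\omega-p\varphi_0^{p-1}$ on the line (one negative eigenvalue with even ground state, simple kernel spanned by the odd function $\varphi_0'$). Your domain-splitting claim does hold: for $\bb V$ in the Kirchhoff domain the mean part satisfies $N\bar v'(0)=\sum_j v_j'(0)=0$, and the zero-mean part has all boundary values equal and summing to zero, hence vanishing. What your approach buys is transparency and, as a by-product, the entire spectral picture including (i); what the paper's approach buys is uniformity with the rest of the article — the same deficiency-index machinery handles the perturbed operators $\bb L^\alpha_{1,k}$ in the Appendix, where the edge potentials $\varphi_{k,j}$ are no longer permutation-symmetric and your $L^2_{\eq}\oplus W$ reduction would not apply. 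Both arguments are sound.
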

\begin{proof}
The proof repeats the one of Theorem 3.12 in \cite{AngGol16}.

$(i)$ The only $L^2(\mathbb{R}_+)$-solution to the equation
$$
 -v''_j+\omega v_j-p\varphi_0^{p-1}v_j=0
 $$
 is $v_j=\varphi_0'$ (up to a factor). Thus, any element of $\ker(\mathbf{L}^0_1)$ has the form $\mathbf{V}=(v_j)_{j=1}^N=(c_j\varphi'_0)_{j=1}^N,\, c_j\in\mathbb{R}$. It is easily seen that continuity condition is satisfied since $\varphi'_0(0)=0$. Condition $\sum\limits_{j=1}^Nv_j'(0)=0$  gives rise to $(N-1)$-dimensional kernel of $\mathbf{L}^0_1$. It is obvious that functions $\hat{\mathbf{\Phi}}_{0,j},\, j=1,..., N-1$ form basis there.

  $(ii)$ Arguing as in the previous item, we can see that $\ker(\bb L^0_1)$ is one-dimensional in $L^2_k(\mathcal G)$, and it is spanned by $\mathbf{\widetilde{\Phi}}_{0,k}$.

  $(iii)$  In what follows  we will use the notation $\opl_0=\left(\Big(-\frac{d^2}{dx^2}+\omega-p\varphi_0^{p-1}\Big)\delta_{i,j}\right)$.
First,  note that $\mathbf{L}^0_1$ is the self-adjoint extension of the following symmetric operator
\begin{equation*}
\begin{split}
\mathbf{L}_0^0=\opl_0,\quad \dom(\mathbf{L}_0^0)=\left\{\mathbf{V}\in H^2(\mathcal{G}):  v_1(0)=...=v_N(0)=0, \sum\limits_{j=1}^N  v_j'(0)=0 \right\}.
\end{split}
\end{equation*}

Below we show that the operator $\mathbf{L}_0^0$ is non-negative and  has deficiency indices $n_\pm(\mathbf{L}_0^0)=1$. First, let us show that the   adjoint operator of $\mathbf{L}_0^0$ is given by
\begin{equation}\label{adjoint_graph}
\begin{split}
(\mathbf{L}_0^0)^*=\opl_0,\quad \dom((\mathbf{L}_0^0)^*)=\left\{\mathbf{V}\in H^2(\mathcal{G}): v_1(0)=...=v_N(0)\right\}.
\end{split}
\end{equation}
Using standard arguments one can prove that $\dom((\mathbf{L}_0^0)^*)\subset H^2(\mathcal{G})$ and  $(\mathbf{L}_0^0)^*=\opl_0$ (see  \cite[Chapter V,\S 17]{Nai67}).
Denoting
$$D_0^*:=\left\{\mathbf{V}\in H^2(\mathcal{G}): v_1(0)=...=v_N(0)\right\},$$ we easily get
the inclusion  $D_0^*\subseteq \dom((\mathbf{L}_0^0)^*)$. Indeed, let $\mathbf{U}=(u_j)_{j=1}^N\in D_0^*$ and $\bb V=(v_j)_{j=1}^N\in \dom(\mathbf{L}_0^0)$, then we get for $\mathbf{U}^*=\opl_0(\mathbf{U})\in L^2(\mathcal{G})$
\begin{equation*}
\begin{split}
&(\mathbf{L}_0^0\mathbf{V}, \mathbf{U})_2=(\opl_0(\mathbf{V}),\bb U)_2=(\bb V,\opl_0(\mathbf{U}))_2+\sum\limits_{j=1}^N\left[-v'_ju_j+v_ju'_j\right]_0^\infty\\&=(\bb V,\opl_0(\mathbf{U}))_2=(\bb V, \bb U^*)_2,
\end{split}
\end{equation*}
which, by definition of the adjoint operator, means that $\mathbf{U}\in \dom((\mathbf{L}_0^0)^*)$ or $D_0^*\subseteq \dom((\mathbf{L}_0^0)^*)$.

Let us show the inverse inclusion $D_0^*\supseteq \dom((\mathbf{L}_0^0)^*)$. Take $\bb U\in\dom((\mathbf{L}_0^0)^*)$, then for any $\bb V\in \dom(\mathbf{L}_0^0)$ we have
\begin{equation*}
\begin{split}
&(\mathbf{L}_0^0\bb V, \bb U)_2=(\opl_0(\bb V), \bb U)_2=(\bb V, \opl_0(\bb U))_2+\sum\limits_{j=1}^N\left[-v'_ju_j+v_ju'_j\right]_0^\infty\\&=(\bb V, (\mathbf{L}_0^0)^*\bb U)_2=(\bb V, \opl_0(\bb U))_2.
\end{split}
\end{equation*}
Thus, we arrive at the equality
\begin{equation}\label{adjoint}
\sum\limits_{j=1}^N\left[-v'_ju_j+v_ju'_j\right]_0^\infty=\sum\limits_{j=1}^N v'_j(0)u_j(0)=0
\end{equation}
for any $\bb V\in \dom(\mathbf{L}_0^0)$.
Let ${\bb W}=(w_j)_{j=1}^N\in \dom(\mathbf{L}_0^0)$ be such that $w'_3(0)= w'_4(0)=...= w'_N(0)= 0.$ Then   for  $\bb U\in \dom((\mathbf{L}_0^0)^*)$ from  \eqref{adjoint} it follows that
\begin{equation}\label{adjoint1} \sum\limits_{j=1}^N w_{j}'(0)u_j(0)=w'_1(0)u_1(0)+w'_2(0)u_2(0)=0.
\end{equation}
Recalling that $\sum\limits_{j=1}^N  w_j'(0)= w_1'(0)+w_2'(0)=0$ and assuming $w_2'(0)\neq 0$, we obtain from \eqref{adjoint1} the equality $u_1(0)=u_2(0)$.
 Repeating the similar arguments for ${\bb W}=({w}_j)_{j=1}^N\in \dom(\mathbf{L}_0^0)$ such that ${w}'_4(0)= {w}'_5(0)=...= {w}'_N(0)=0$, we  get $u_1(0)=u_2(0)=u_3(0)$ and so on. Finally taking ${\bb W}=({w}_j)_{j=1}^N\in \dom(\mathbf{L}_0^0)$ such that ${w}'_N(0)= 0$ we will arrive at $u_1(0)=u_2(0)=...=u_{N-1}(0)$ and consequently $u_1(0)=u_2(0)=...=u_{N}(0)$.
 Thus, $\bb U\in D_0^*$ or $D_0^*\supseteq \dom((\mathbf{L}_0^0)^*)$, and \eqref{adjoint_graph} holds.

Let us  show that the operator $\mathbf{L}_0^0$ is non-negative. First, note that every component of the vector $\mathbf{V}=(v_j)_{j=1}^N\in H^2(\mathcal{G})$ satisfies the following identity
\begin{equation*}\label{identity_graph}
-v_j''+\omega v_j-p\varphi_0^{p-1}v_j=
\frac{-1}{\varphi'_0}\frac{d}{dx}\left[(\varphi'_0)^2\frac{d}{dx}\left(\frac{v_j}{\varphi'_0}\right)\right],\quad x> 0.
\end{equation*}
Using the above equality and integrating by parts, we get for $\mathbf{V}\in \dom(\mathbf{L}_0^0)$
 \begin{equation*}\label{nonneg_graph}\begin{split}
&(\mathbf{L}_0^0\mathbf{V},\mathbf{V})_2=
\sum\limits_{j=1}^N\int\limits^{\infty}_{0}(\varphi'_{0})^2\left[\frac{d}{dx}\left(\frac{v_j}{\varphi'_{0}}\right)\right]^2dx+
\sum\limits_{j=1}^N\left[-v_j'{v}_j+v_j^2\frac{\varphi''_{0}}{\varphi'_{0}}\right]^{\infty}_{0}\\&=\sum\limits_{j=1}^N\int\limits^{\infty}_{0}(\varphi'_{0})^2\left[\frac{d}{dx}\left(\frac{v_j}{\varphi'_{0}}\right)\right]^2dx\geq 0,
\end{split}
\end{equation*}
where the non-integral  term becomes zero by the boundary conditions for $\mathbf{V}$ and the fact that $x=0$ is the first-order zero  for $\varphi'_0$ (i.e. $\varphi''_0(0)\neq 0$). Indeed,
$$
\sum\limits_{j=1}^N\left[-v_j'{v}_j+v_j^2\frac{\varphi''_{0}}{\varphi'_{0}}\right]^{\infty}_{0}=-\sum\limits_{j=1}^N\lim_{x\to 0+}\frac{2v_j(x)v'_j(x)\varphi''_0(x)+v_j^2(x)\varphi'''_0(x)}{\varphi''_{0}(x)}=0.
$$

Due to the von Neumann decomposition given in Proposition \ref{d5}, we obtain
\begin{equation*}
\dom((\mathbf{L}_0^0)^*)=\dom(\mathbf{L}_0^0)\oplus\Span\{\mathbf{V}_i\}\oplus\Span\{\mathbf{V}_{-i}\},
\end{equation*}
where $\mathbf{V}_{\pm i}=\left(e^{i\sqrt{\pm i}x}\right)_{j=1}^N,\, \Im(\sqrt{\pm i})>0$.
Indeed, since $\varphi_0\in L^\infty(\mathbb{R_+})$,  we get

$$\dom((\mathbf{L}_0^0)^*)=\dom(\mathbf{L}^*)=\dom(\mathbf{L})\oplus\Span\{\mathbf{V}_i\}\oplus\Span\{\mathbf{V}_{-i}\},$$ where
$$
\mathbf{L}=\left(\Big(-\frac{d^2}{dx^2}\Big)\delta_{i,j}\right), \quad \dom(\mathbf{L})=\dom(\mathbf{L}^0_0),\quad \mathcal{N}_{\pm}(\bb L)=\Span\{\mathbf{V}_{\pm i}\}.
$$
Since $n_\pm (\bb L)=1$, by \cite[Chapter IV, Theorem 6]{Nai67}, it follows that $n_{\pm}(\mathbf{L}_0^0)=1.$
Next, due to  Proposition \ref{semibounded},  $n(\mathbf{L}^0_{1})\leq 1$. Taking into account that $(\mathbf{L}^0_{1}\mathbf{\Phi}_0,\mathbf{\Phi}_0)_2=-(p-1)||\mathbf{\Phi}_0||_{p+1}^{p+1}<0$, where $\mathbf{\Phi}_0=(\varphi_{0})_{j=1}^N$, we  arrive at $n(\mathbf{L}^0_{1})=1$. Finally, since $\bb \Phi_0\in L_k^2(\mathcal{G})$ for any $k$, we have $n(\mathbf{L}^0_{1}|_{L_k^2(\mathcal{G})})=1$.

$(iv)$ follows from Weyl's theorem.
\end{proof}
\begin{remark}
Observe that, when we deal with the deficiency indices, the operator $\bb L_0^0$ is assumed to act on complex-valued functions which however does not affect the analysis of the negative spectrum of $\bb L_1^0$ acting on real-valued functions.
\end{remark}

The following lemma states the analyticity of the family  of operators $\mathbf{L}^\alpha_{1,k}$.

 \begin{lemma}\label{analici} As a function of $\alpha$, $(\mathbf{L}^\alpha_{1,k})$ is  real-analytic family of self-adjoint operators of type (B) in the sense of Kato.
\end{lemma}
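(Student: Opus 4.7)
The plan is to verify directly the three conditions characterising a real-analytic family of self-adjoint operators of type (B) in Kato's sense (see \cite[Chapter VII, \S4.2]{kato}): a common form domain independent of $\alpha$, symmetry, closedness and semi-boundedness of each form, and pointwise real-analyticity of $\alpha\mapsto B^\alpha_{1,k}(\bb F,\bb F)$ for every $\bb F$ in the common domain.

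The first two properties are essentially in hand. From \eqref{spec14} the form domain is $\EE$ independently of $\alpha$, and rewriting
\begin{equation*}
B^\alpha_{1,k}(\bb F,\bb F)=\|\bb F'\|_2^2+\omega\|\bb F\|_2^2+\alpha|f_1(0)|^2-p\sum_{j=1}^N\int_0^\infty(\varphi^\alpha_{k,j})^{p-1}|f_j|^2\,dx
\end{equation*}
presents $B^\alpha_{1,k}$ as a perturbation of the non-negative closed Kirchhoff form $\|\bb F'\|_2^2+\omega\|\bb F\|_2^2$ on $\EE$. The trace term $\alpha|f_1(0)|^2$ is form-bounded with relative bound zero via $|f(0)|^2\le\varepsilon\|f'\|_2^2+C_\varepsilon\|f\|_2^2$, which follows from $H^1(\mathbb{R}_+)\hookrightarrow C(\overline{\mathbb{R}_+})$, and the multiplication term is a bounded symmetric perturbation since $\bb\Phi^\alpha_k\in L^\infty(\mathcal{G})$. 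The KLMN theorem then supplies closedness and semi-boundedness of $B^\alpha_{1,k}$, and $\mathbf{L}^\alpha_{1,k}$ is the self-adjoint operator attached by the First Representation Theorem exactly as in Theorem \ref{repres}.

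For the analyticity, observe that the admissibility condition $\omega>\alpha^2/(N-2k)^2$ is equivalent to $\alpha/((2k-N)\sqrt\omega)\in(-1,1)$, on which interval $\tanh^{-1}$ is real-analytic; hence the translation parameter $a_k=\tanh^{-1}(\alpha/((2k-N)\sqrt\omega))$ from \eqref{Phi_k} is real-analytic in $\alpha$, and each pointwise map $\alpha\mapsto\varphi^\alpha_{k,j}(x)$ extends holomorphically to a complex neighbourhood of any admissible $\alpha_0$. The explicit $\sech^2$ structure yields exponential decay of $\varphi^\alpha_{k,j}(x)$ in $x$ with rate locally uniform in $\alpha$, so Cauchy estimates on $\partial^n_\alpha(\varphi^\alpha_{k,j})^{p-1}$ produce an $\alpha$-independent integrable majorant for the Taylor expansion in $\alpha$. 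Dominated convergence then upgrades pointwise analyticity to real-analyticity of $\alpha\mapsto\int_0^\infty(\varphi^\alpha_{k,j})^{p-1}|f_j|^2\,dx$ for every $\bb F\in\EE$; combined with the trivial polynomial dependence of the other three summands on $\alpha$, this delivers condition (iii). The delicate point is precisely this last interchange: one must transport the analyticity of $a_k$ through the nonlinearity $(\,\cdot\,)^{p-1}$ and across the half-line while keeping the exponential tail of $\sech^2$ intact on a small complex strip about $\alpha_0$. Because $\alpha$ enters $\varphi^\alpha_{k,j}$ only as an additive shift inside $\sech$, a slight complex displacement of $\alpha$ preserves decay at infinity, and the estimate closes.
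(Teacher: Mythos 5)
Your proposal is correct and follows essentially the same route as the paper: reduce to showing that the family of forms $(B^\alpha_{1,k})$ from \eqref{spec14} is a real-analytic family of type (B) with $\alpha$-independent domain $\EE$, and invoke Kato's Theorem VII-4.2. You in fact supply more detail than the paper does on the one genuinely non-trivial point, the analyticity of $\alpha\mapsto\int_0^\infty(\varphi^\alpha_{k,j})^{p-1}f_j^2\,dx$; note only that your worry about ``transporting analyticity through the nonlinearity'' is moot, since $(\varphi^\alpha_{k,j})^{p-1}$ equals $\tfrac{(p+1)\omega}{2}\sech^2\bigl(\tfrac{(p-1)\sqrt{\omega}}{2}x\mp a_k\bigr)$ exactly, so the integrand is manifestly analytic in $a_k$ with the required uniform exponential majorant.
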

\begin{proof} By Theorem \ref{repres} and \cite[Theorem VII-4.2]{kato}, it suffices to prove that the family  of bilinear forms $(B^{\alpha}_{1,k})$ defined in \eqref{spec14}  is   real-analytic  of type (B). Indeed,  it is immediate that  $B^{\alpha}_{1,k}$ is  bounded  from below and closed. Moreover, the decomposition  of $B^{\alpha}_{1,k}$ into $B^{\alpha}$ and $B_{1,k}$,  implies that $\alpha\to (B^{\alpha}_{1,k}\bb V, \bb V)$ is  analytic.
\end{proof}

Combining Lemma \ref{analici} and Theorem \ref{spect_L^0_1}, in the framework of the perturbation theory we obtain the following proposition.
\begin{proposition}\label{perteigen} Let  $k\in\left\{1,...,\left[\tfrac{N-1}{2}\right]\right\}$. Then there  exist $\alpha_0>0$ and two analytic functions $\lambda_k : (-\alpha_0,\alpha_0)\to \mathbb R$ and $\bb F_k: (-\alpha_0,\alpha_0)\to L^2_k(\mathcal{G})$ such that
\begin{enumerate}
\item[$(i)$] $\lambda_k(0)=0$ and $\bb F_k(0)=\mathbf{\widetilde{\Phi}}_{0,k}$, where $\mathbf{\widetilde{\Phi}}_{0,k}$ is defined by \eqref{Psi_0};

\item[$(ii)$] for all $\alpha\in (-\alpha_0,\alpha_0)$, $\lambda_k(\alpha)$ is the  simple isolated second eigenvalue of $\bb L^\alpha_{1,k}$ in $L^2_k(\mathcal{G})$, and $\bb F_k(\alpha)$ is the associated eigenvector for $\lambda_k(\alpha)$;

\item[$(iii)$] $\alpha_0$ can be chosen small enough to ensure that for  $\alpha\in (-\alpha_0,\alpha_0)$  the spectrum of $\bb L_{1,k}^\alpha$ in $L^2_k(\mathcal{G})$ is positive, except at most the  first two eigenvalues.
\end{enumerate}
\end{proposition}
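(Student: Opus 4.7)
The plan is to apply Kato's analytic perturbation theory for type (B) families to the restriction of $\bb L^\alpha_{1,k}$ to the subspace $L^2_k(\mathcal{G})$. First I would verify invariance: since $\varphi_{k,1}=\ldots=\varphi_{k,k}$ and $\varphi_{k,k+1}=\ldots=\varphi_{k,N}$ by \eqref{Phi_k}, the potential $p(\varphi_{k,j})^{p-1}$ takes only two distinct values across the edges and the vertex conditions in $D_\alpha$ respect this block structure, so $\bb L^\alpha_{1,k}|_{L^2_k(\mathcal{G})}$ is self-adjoint on $L^2_k(\mathcal{G})$. Restricting the forms studied in Lemma \ref{analici} then yields a real-analytic family of type (B) on the smaller Hilbert space.

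By Theorem \ref{spect_L^0_1}(ii)--(iv), at $\alpha=0$ the spectrum of $\bb L^0_1|_{L^2_k(\mathcal{G})}$ consists of one simple negative eigenvalue $\mu_0<0$, the simple isolated eigenvalue $0$ with eigenvector $\mathbf{\widetilde{\Phi}}_{0,k}$, and essential spectrum $[\omega,\infty)$; in particular $0$ is separated by a positive distance from both $\mu_0$ and $[\omega,\infty)$. Choosing a small circle $\Gamma_0\subset\mathbb{C}$ enclosing only $0$, Kato's theorem for type (B) analytic families (see \cite[Chapter VII]{kato}) shows that for $|\alpha|$ small enough $\Gamma_0$ lies in the resolvent set of $\bb L^\alpha_{1,k}|_{L^2_k(\mathcal{G})}$, and the Riesz projection
\[
P_k(\alpha) = -\frac{1}{2\pi i}\oint_{\Gamma_0}\bigl(\bb L^\alpha_{1,k}|_{L^2_k(\mathcal{G})}-zI\bigr)^{-1}\,dz
\]
is real-analytic in $\alpha$ with constant rank equal to $1$. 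Simplicity of the unperturbed eigenvalue then produces a single real-analytic branch $\lambda_k(\alpha)\in\mathbb{R}$ inside $\Gamma_0$ and a real-analytic eigenvector $\bb F_k(\alpha)$ with $\lambda_k(0)=0$ and $\bb F_k(0)=\mathbf{\widetilde{\Phi}}_{0,k}$, establishing (i) and (ii).

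For (iii) I would enlarge the contour to a simple closed curve $\Gamma$ enclosing both $\mu_0$ and $0$ while remaining at a positive distance from $[\omega,\infty)$; the associated Riesz projection has rank $2$ at $\alpha=0$ and, by analyticity, keeps rank $2$ for small $|\alpha|$. Combined with Weyl's theorem---which guarantees that the essential spectrum of $\bb L^\alpha_{1,k}|_{L^2_k(\mathcal{G})}$ equals $[\omega,\infty)$ for every $\alpha$, since the $\alpha$-dependent modifications (the smooth decaying potential and the change of boundary coupling at $\nu=0$) amount to a relatively compact perturbation of the free operator---this forces the part of the spectrum of $\bb L^\alpha_{1,k}|_{L^2_k(\mathcal{G})}$ lying below $[\omega,\infty)$ to consist of the two analytic branches only, the rest of the spectrum being positive and uniformly bounded away from zero after possibly shrinking $\alpha_0$. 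The main conceptual point, and the only mild obstacle, is the stability of the essential spectrum under the simultaneous variation of the potential and the boundary parameter; once justified, the rest is a routine contour-integral argument in the spirit of Kato.
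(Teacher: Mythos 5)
Your proposal is correct and follows essentially the same route as the paper: the spectral data of $\bb L^0_1$ in $L^2_k(\mathcal G)$ from Theorem \ref{spect_L^0_1}$(ii)$--$(iv)$, the type (B) real-analyticity of the family from Lemma \ref{analici}, Riesz projections of constant rank, and the Kato--Rellich theorem. The only differences are cosmetic---you use a small contour around $0$ and then a larger one around both low-lying eigenvalues, while the paper encircles $\sigma_0=\{\lambda^0_1,0\}$ with a single curve from the start---and your explicit check that $L^2_k(\mathcal G)$ is an invariant subspace is a useful detail the paper leaves implicit.
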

\begin{proof} Using the  structure of the spectrum of the operator $\bb L^0_1$  given in   Theorem \ref{spect_L^0_1}$(ii)-(iv)$, we can separate the spectrum $\sigma(\bb L^0_1)$ in $L^2_k(\mathcal{G})$  into two parts $\sigma_0=\{\lambda^0_1, 0\}$, $\lambda^0_1<0$, and  $\sigma_1$ by a closed curve  $\Gamma$ (for example, a circle), such that $\sigma_0$ belongs to the inner domain of $\Gamma$ and $\sigma_1$ to the outer domain of $\Gamma$ (note that $\sigma_1\subset (\epsilon, +\infty)$ for $\epsilon>0$).  Next,  Lemma \ref{analici} and the analytic perturbations theory imply  that  $\Gamma\subset \rho(\bb L^\alpha_{1, k})$ for sufficiently small $|\alpha |$, and $\sigma (\bb L^\alpha_{1, k})$ is likewise separated by $\Gamma$ into two parts, such   that the part of $\sigma (\bb L^\alpha_{1, k})$ inside $\Gamma$ consists of a finite number of eigenvalues with total multiplicity (algebraic) two. Therefore, we obtain from the Kato-Rellich Theorem (see  \cite[Theorem XII.8]{RS}) the existence of two analytic functions $\lambda_k, \bb F_k$ defined in a neighborhood of zero such that   items $(i)$, $(ii)$ and $(iii)$ hold.
\end{proof}

Below we investigate how the perturbed second eigenvalue moves depending on the sign of $\alpha$.

\begin{proposition}\label{signeigen} There exists $0<\alpha_1<\alpha_0$ such that $\lambda_k(\alpha)<0$ for any $\alpha\in (-\alpha_1,0)$, and $\lambda_k(\alpha)>0$ for  any $\alpha\in (0, \alpha_1)$. Thus, in  $L^2_k(\mathcal G)$ for $\alpha$  small enough,  we have $n(\bb L^\alpha_{1, k})=2$  as $\alpha<0$, and $n(\bb L^\alpha_{1, k})=1$  as $\alpha>0$.
\end{proposition}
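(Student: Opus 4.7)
The plan is to combine analyticity of $\lambda_k$ with a first-order computation. Since $\lambda_k$ is real-analytic (Proposition \ref{perteigen}) and $\lambda_k(0)=0$, the sign of $\lambda_k(\alpha)$ for $|\alpha|$ small is controlled by $\lambda_k'(0)$, provided the latter is nonzero. I will therefore aim to show $\lambda_k'(0)>0$; both sign assertions then follow at once from $\lambda_k(\alpha)=\lambda_k'(0)\alpha+O(\alpha^{2})$.

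To compute $\lambda_k'(0)$ I would apply the Hellmann--Feynman identity for quadratic forms: differentiating $\lambda_k(\alpha)\|\bb F_k(\alpha)\|_{2}^{2}=B^{\alpha}_{1,k}(\bb F_k(\alpha),\bb F_k(\alpha))$ at $\alpha=0$ and using $\lambda_k(0)=0$ and $\bb F_k(0)=\widetilde{\bb\Phi}_{0,k}$ gives
\[
\lambda_k'(0)\,\bigl\|\widetilde{\bb\Phi}_{0,k}\bigr\|_{2}^{2}
=\left.\frac{\partial}{\partial\alpha}B^{\alpha}_{1,k}\right|_{\alpha=0}\!\bigl(\widetilde{\bb\Phi}_{0,k},\widetilde{\bb\Phi}_{0,k}\bigr).
\]
The form $B^{\alpha}_{1,k}$ depends on $\alpha$ through (a) the boundary term $\alpha f_{1}(0)g_{1}(0)$, and (b) the potential $-p(\varphi_{k,j}^{\alpha})^{p-1}$ via $a_k=\tanh^{-1}(\alpha/((2k-N)\sqrt{\omega}))$. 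Contribution (a) vanishes, since $\widetilde{\bb\Phi}_{0,k,1}(0)=\tfrac{N-k}{k}\varphi_{0}'(0)=0$ (the soliton $\varphi_{0}$ is even). For (b) I use $\varphi_{k,j}^{\alpha}(x)=\varphi_{0}(x\mp a_k/\beta)$, with $\beta=(p-1)\sqrt{\omega}/2$ and minus for $j\le k$, plus for $j>k$, together with $\partial_{\alpha}a_k|_{\alpha=0}=1/((2k-N)\sqrt{\omega})$.

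Substituting this into $-p(p-1)\sum_{j}\int_{0}^{\infty}\varphi_{0}^{p-2}\,(\partial_{\alpha}\varphi_{k,j}^{\alpha}|_{\alpha=0})\,(\widetilde{\bb\Phi}_{0,k,j})^{2}\,dx$ and using the explicit weights $((N-k)/k)^{2}$ for $j\le k$ and $1$ for $j>k$, the opposite $\pm$ signs in $a_k$ yield an algebraic cancellation leading to
\[
\lambda_k'(0)\,\bigl\|\widetilde{\bb\Phi}_{0,k}\bigr\|_{2}^{2}
=\frac{p(p-1)(N-k)}{\beta\,k\sqrt{\omega}}\int_{0}^{\infty}\varphi_{0}^{p-2}(x)\,\bigl(-\varphi_{0}'(x)\bigr)^{3}\,dx.
\]
Since $\varphi_{0}>0$ is strictly decreasing on $\mathbb{R}_{+}$, the integrand is strictly positive, so $\lambda_k'(0)>0$.

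For the Morse index count in $L^{2}_k(\mathcal{G})$, Proposition \ref{perteigen}(iii) ensures that at most the first two eigenvalues of $\bb L^{\alpha}_{1,k}$ can be nonpositive for $\alpha$ small, while continuity of the lowest (simple and isolated) eigenvalue combined with $n(\bb L^{0}_{1}|_{L^{2}_k(\mathcal{G})})=1$ from Theorem \ref{spect_L^0_1}(iii) forces it to remain strictly negative. Together with the sign of $\lambda_k(\alpha)$ just established, this gives $n(\bb L^{\alpha}_{1,k}|_{L^{2}_k(\mathcal{G})})=2$ for $\alpha<0$ small and $=1$ for $\alpha>0$ small. The main technical obstacle is the algebraic collapse in step (b): the asymmetry of the $\pm$ signs in $a_k$ must combine with the weights $(N-k)/k$ and $-1$ of $\widetilde{\bb\Phi}_{0,k}$ so that the $(2k-N)$ denominators cancel and leave an integral with definite sign; without this structural cancellation one could not even conclude that $\lambda_k'(0)\ne 0$.
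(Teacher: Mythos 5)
Your proposal is correct and follows essentially the same route as the paper: a first-order perturbation computation of $\lambda_k'(0)$ using $\bb F_k(0)=\widetilde{\bb\Phi}_{0,k}$, the expansion of $\varphi^\alpha_{k,j}$ through $\partial_\alpha a_k|_{\alpha=0}$, and the same algebraic cancellation of the $(N-2k)$ factors, arriving at the identical expression $\lambda_k'(0)\|\widetilde{\bb\Phi}_{0,k}\|_2^2=-\tfrac{2p(N-k)}{k\omega}\int_0^\infty\varphi_0^{p-2}(\varphi_0')^3\,dx>0$. The only cosmetic difference is that you differentiate the quadratic form (Hellmann--Feynman), whereas the paper computes $(\bb L^\alpha_{1,k}\bb F_k(\alpha),\widetilde{\bb\Phi}_{0,k})_2$ in two ways using self-adjointness and $\widetilde{\bb\Phi}_{0,k}\in D_\alpha$; these are the same calculation.
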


\begin{proof} From Taylor's theorem we have the following expansions
\begin{equation}\label{decomp1}
\lambda_k(\alpha)=\lambda_{0,k} \alpha+ O(\alpha^2)\quad\text{and}\quad \bb F_k(\alpha)=\mathbf{\widetilde{\Phi}}_{0,k}+ \alpha \bb F_{0,k}  +   \bb O(\alpha^2),
\end{equation}
where  $\lambda_{0,k}=\lambda'_k(0)\in \mathbb R$ and $\bb F_{0,k}=\partial_\alpha \bb F_k(\alpha)|_{\alpha=0}\in L^2_k(\mathcal{G})$.  The desired result will follow  if we show that $\lambda_{0,k}>0$.  We compute $(\bb L^\alpha_{1,k} \bb F_k(\alpha), \mathbf{\widetilde{\Phi}}_{0,k})_2$ in two different ways.

Note that for  $\bb \Phi_k=\bb \Phi^\alpha_k$ defined by \eqref{Phi_k} we have
\begin{equation}\label{1a}
\begin{split}
&\bb \Phi_k(\alpha)=\bb\Phi_0+\alpha\bb G_{0,k}+ \bb O(\alpha^2),\\ \bb G_{0,k}=\partial_\alpha\bb \Phi_k(\alpha)|_{\alpha=0}&=\tfrac{2}{(p-1)(N-2k)\omega}\left(\underset{\bf 1}{\varphi'_{0}},..., \underset{\bf k}{\varphi'_{0}},\underset{\bf k+1}{-\varphi'_{0}},...,\underset{\bf N}{-\varphi'_{0}}\right).
\end{split}
\end{equation}
 From \eqref{decomp1} we obtain
\begin{equation}\label{1}
(\bb L^\alpha_{1,k} \bb F_k(\alpha), \mathbf{\widetilde{\Phi}}_{0,k})_2=\lambda_{0,k} \alpha||\mathbf{\widetilde{\Phi}}_{0,k}||^2_2+ O(\alpha^2).
\end{equation}
By  $\bb L^0_{1} \mathbf{\widetilde{\Phi}}_{0,k}=\bb 0$ and \eqref{decomp1} we get
\begin{equation}\label{2}\bb L^\alpha_{1,k}\mathbf{\widetilde{\Phi}}_{0,k}=p\left((\bb \Phi_0)^{p-1}-(\bb \Phi_k)^{p-1}\right)\mathbf{\widetilde{\Phi}}_{0,k}=-\alpha p(p-1)(\bb \Phi_0)^{p-2}\bb G_{0,k}\mathbf{\widetilde{\Phi}}_{0,k}+ \bb O(\alpha^2).
\end{equation}
The operations in  the last equality are  componentwise.
Equations \eqref{2},  \eqref{1a}, and $\mathbf{\widetilde{\Phi}}_{0,k}\in D_\alpha$ induce
\begin{equation}\label{3}
\begin{split}
&(\bb L^\alpha_{1,k} \bb F_k(\alpha), \mathbf{\widetilde{\Phi}}_{0,k})_2
=(\bb F_k(\alpha), \bb L^\alpha_{1,k}\mathbf{\widetilde{\Phi}}_{0,k})_2\\&=-\left(\mathbf{\widetilde{\Phi}}_{0,k}, \alpha p(p-1)(\bb \Phi_0)^{p-2}\bb G_{0,k}\mathbf{\widetilde{\Phi}}_{0,k}\right)_2+O(\alpha^2)\\&=-\alpha p(p-1)\left(\tfrac{(N-k)^2}{k}-(N-k)\right)\tfrac{2}{(p-1)(N-2k)\omega}\int\limits_0^\infty(\varphi'_0)^3\varphi_0^{p-2}dx+O(\alpha^2)\\&=-2\alpha p\tfrac{N-k}{k\omega}\int\limits_0^\infty(\varphi'_0)^3\varphi_0^{p-2}dx+O(\alpha^2).
\end{split}
\end{equation}
Finally, combining \eqref{3} and \eqref{1}, we obtain
$$\lambda_{0,k}=\frac{-2p\tfrac{N-k}{k\omega}\int\limits_0^\infty(\varphi'_0)^3\varphi_0^{p-2}dx}{||\mathbf{\widetilde{\Phi}}_{0,k}||^2_2}+O(\alpha).$$
It follows that $\lambda_{0,k}$ is positive for sufficiently small $|\alpha|$ (due to the negativity of $\varphi'_0$ on $\mathbb{R}_+$), which in view of \eqref{decomp1} ends the proof.
\end{proof}

Now we can count the number of negative eigenvalues of $\bb L^\alpha_{1,k}$ in   $L^2_k(\mathcal{G})$ for any $\alpha$, using a classical continuation argument based on the Riesz projection.

\begin{proposition}\label{n(L_1)}
Let $k\in\left\{1,...,\left[\tfrac{N-1}{2}\right]\right\}$ and $\omega>\tfrac{\alpha^2}{(N-2k)^2}$. Then the following assertions hold.
\begin{enumerate}
\item[$(i)$] If  $\alpha>0$, then  $n(\bb L^\alpha_{1,k}|_{L^2_k(\mathcal{G})})=1$.
\item[$(ii)$] If $\alpha<0$, then  $n(\bb L^\alpha_{1,k}|_{L^2_k(\mathcal{G})})=2$.
\end{enumerate}
\end{proposition}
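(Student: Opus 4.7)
My plan is to extend the local spectral count near $\alpha=0$ from Proposition~\ref{signeigen} to the entire admissible parameter range by a continuation argument based on Riesz projections. The decisive input is that $\ker(\bb L^\alpha_{1,k})=\{\bb 0\}$ throughout the range (Proposition~\ref{grafoN2}(ii)), so an eigenvalue can never cross the value zero; combined with the analytic dependence from Lemma~\ref{analici}, this will pin down $n(\bb L^\alpha_{1,k}|_{L^2_k(\mathcal{G})})$ as a locally constant, hence globally constant, function on each connected component of the admissible set.

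First I would check that $\bb L^\alpha_{1,k}$ genuinely leaves $L^2_k(\mathcal{G})$ invariant: the potential $p(\varphi^\alpha_{k,j})^{p-1}$ takes one common value on the first $k$ edges and another common value on the remaining $N-k$ edges, and the boundary conditions in $D_\alpha$ are compatible with the $L^2_k$-symmetry, so the restriction is a well-defined self-adjoint operator. By Proposition~\ref{grafoN2}(iii), its essential spectrum is $[\omega,\infty)$ and its negative spectrum is finite, so $n(\alpha):=n(\bb L^\alpha_{1,k}|_{L^2_k(\mathcal{G})})$ is a well-defined nonnegative integer for every $\alpha$ in either of the two connected components $I_-=(-(N-2k)\sqrt{\omega},0)$ and $I_+=(0,(N-2k)\sqrt{\omega})$ of the admissible set.

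Next, to prove local constancy of $n(\alpha)$, I would fix $\alpha_0\in I_\pm$, choose a closed rectifiable contour $\Gamma\subset\mathbb{C}\setminus\sigma(\bb L^{\alpha_0}_{1,k})$ whose interior contains exactly the negative eigenvalues of $\bb L^{\alpha_0}_{1,k}|_{L^2_k(\mathcal{G})}$, and consider the Riesz projection
\[
P(\alpha)=-\frac{1}{2\pi i}\int_\Gamma(\bb L^\alpha_{1,k}-z)^{-1}\,dz .
\]
Lemma~\ref{analici} and standard perturbation theory imply that, for $\alpha$ sufficiently close to $\alpha_0$, the contour $\Gamma$ remains in $\rho(\bb L^\alpha_{1,k})$, the projection $P(\alpha)$ depends analytically on $\alpha$, and its range has locally constant dimension. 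Since $\ker(\bb L^\alpha_{1,k})=\{\bb 0\}$ for every admissible $\alpha$, no eigenvalue can cross zero as $\alpha$ varies, so this locally constant dimension is exactly $n(\alpha)$. Connectedness of $I_\pm$ then forces $n$ to be constant on each component, and the conclusion follows from Proposition~\ref{signeigen}, which identifies the constant as $2$ on $I_-$ and $1$ on $I_+$.

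The main technical obstacle I anticipate is the care needed when choosing the contour $\Gamma$ and transferring local to global conclusions. A priori, as $\alpha$ moves across $I_\pm$, an eigenvalue could drift to $-\infty$ or re-emerge from the essential spectrum, events that a fixed $\Gamma$ does not detect; to rule them out I would combine the triviality of the kernel with a uniform lower bound on $\bb L^\alpha_{1,k}$ coming from the uniform boundedness of $\{\bb\Phi_k^\alpha\}$ on compact subintervals of $I_\pm$, together with the fact that the essential spectrum is identically $[\omega,\infty)$, so that the total number of eigenvalues in $(-\infty,0)$ is locally, and hence globally, controlled.
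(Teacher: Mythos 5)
Your proposal is correct and follows essentially the same route as the paper: a continuation argument in $\alpha$ via analytic families of Riesz projections (Lemma~\ref{analici}), using the triviality of $\ker(\bb L^\alpha_{1,k})$ from Proposition~\ref{grafoN2} to prevent eigenvalues from crossing zero, and anchoring the count at small $|\alpha|$ with Proposition~\ref{signeigen}. The paper phrases the globalization as an infimum-plus-contradiction argument rather than your ``locally constant on a connected set'' formulation, but the two are equivalent, and your extra remarks on ruling out eigenvalues escaping to $-\infty$ or emerging from $\sigma_{\ess}=[\omega,\infty)$ only make the same argument more explicit.
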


\begin{proof} We consider  the case $\alpha<0$. Recall that  $\ker(\bb L^\alpha_{1,k})=\{\bb 0\}$  by Proposition \ref{grafoN2}. Define $\alpha_\infty$ by
$$
\alpha_\infty=\inf \{\tilde \alpha<0:  \bb L^\alpha_{1,k}\;{\text{has exactly two negative eigenvalues for all}}\; \alpha \in (\tilde \alpha,0)\}.
$$
 Proposition \ref{signeigen} implies that $\alpha_\infty$ is well defined and $\alpha_\infty\in [-\infty,0)$. We claim that $\alpha_\infty=-\infty$. Suppose that $\alpha_\infty> -\infty$. Let $M=n(\bb L^{\alpha_{\infty}}_{1,k})$ and $\Gamma$  be a closed curve (for example, a circle or a rectangle) such that $0\in \Gamma\subset \rho(\bb L^{\alpha_{\infty}}_{1,k})$, and  all the negative eigenvalues of  $\bb L^{\alpha_{\infty}}_{1,k}$ belong to the inner domain of $\Gamma$.  The existence of such $\Gamma$ can be  deduced from the lower semi-boundedness of the quadratic form associated to $\bb L^{\alpha_{\infty}}_{1,k}$.

Next, from Lemma \ref{analici} it  follows  that there is  $\epsilon>0$ such that for $\alpha\in [\alpha_{\infty}-\epsilon, \alpha_{\infty}+\epsilon]$ we have $\Gamma\subset \rho(\bb L^\alpha_{1,k})$ and for $\xi \in \Gamma$,
$\alpha\to (\bb L^\alpha_{1,k}-\xi)^{-1}$ is analytic. Therefore, the existence of an analytic family of Riesz projections $\alpha\to P(\alpha)$  given by
$$
P(\alpha)=-\frac{1}{2\pi i}\int\limits_{\Gamma} (\bb L^{\alpha}_{1,k}-\xi)^{-1}d\xi
$$
implies  that $\dim(\ran P(\alpha))=\dim(\ran P(\alpha_\infty))=M$ for all $\alpha\in [\alpha_\infty-\epsilon, \alpha_{\infty}+\epsilon]$. Next, by definition of $\alpha_\infty$, $\bb L^{\alpha_{\infty}+\epsilon}_{1, k} $ has two negative eigenvalues and $M=2$, hence $\bb L^\alpha_{1,k}$ has two negative eigenvalues for $\alpha\in (\alpha_{\infty}-\epsilon, 0)$,  which contradicts with the definition of $\alpha_{\infty}$. Therefore,  $\alpha_{\infty}=-\infty$.
\end{proof}
\begin{remark}
\begin{itemize}
\item[$(i)$] The idea of using the continuation argument above was borrowed from \cite[Lemma 12]{CozFuk08}.
\item[$(ii)$]  We note that by Proposition \ref{grafoN'} in Appendix, the Morse index $n(\bb L^\alpha_{1,k})$ in the whole space $L^2(\mathcal{G})$ satisfies the  estimate $n(\bb L^\alpha_{1,k})\leq k+1$ for $\alpha<0$, and   $n(\bb L^\alpha_{1,k})\leq N-k$ for $\alpha>0$.
\end{itemize}
\end{remark}

\subsection{Slope analysis}\label{sub3.3}

In this subsection we evaluate $p(\omega)$ defined in \eqref{p(omega)}.
\begin{proposition}\label{slope_graph'}

Let $\alpha\neq 0$, $k\in\left\{1,...,\left[\tfrac{N-1}{2}\right]\right\}$, and $\omega>\tfrac{\alpha^2}{(N-2k)^2}$.  Let also  $J_k(\omega)=\partial_\omega||\mathbf{\Phi}_{k}^\alpha||_2^2$. Then the following assertions hold.
\begin{itemize}
\item[$(i)$] Let $\alpha<0$, then
\begin{itemize}
\item[$1)$] for $1<p\leq 5$, we have $J_k(\omega)>0$;
\item[$2)$] for $p> 5$, there exists $\omega_k^*$ such that $J_k(\omega_k^*)=0$, and  $J_k(\omega)>0$ for $\omega\in\left(\tfrac{\alpha^2}{(N-2k)^2},\omega_k^*\right)$, while  $J_k(\omega)<0$ for $\omega\in (\omega_k^*,\infty)$.
\end{itemize}
\item[$(ii)$] Let $\alpha>0$, then
\begin{itemize}
\item[$1)$] for $1<p\leq 3$, we have $J_k(\omega)>0$;
\item[$2)$] for $3<p<5$,  there exists $\hat{\omega}_k$ such that $J_k(\hat{\omega}_k)=0,$ and  $J_k(\omega)<0$ for $\omega\in\left(\tfrac{\alpha^2}{(N-2k)^2},\hat{\omega}_k\right)$, while  $J_k(\omega)>0$ for $\omega\in (\hat{\omega}_k,\infty)$;
\item[$3)$] for $p\geq 5$, we have $J_k(\omega)<0$.
\end{itemize}
\end{itemize}
\end{proposition}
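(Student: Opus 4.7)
The plan is to reduce $\|\bb\Phi_k^\alpha\|_2^2$ to an explicit function of the single dimensionless parameter $\xi:=\tanh(a_k)=\tfrac{\alpha}{(2k-N)\sqrt{\omega}}\in(-1,1)$, and then analyze $\partial_\omega\|\bb\Phi_k^\alpha\|_2^2$ by a one-variable calculus argument. Inserting \eqref{Phi_k} into $\sum_{j=1}^N\int_0^\infty|\varphi_{k,j}^\alpha|^2\,dx$ and applying the substitution $u=\tanh\!\bigl(\tfrac{(p-1)\sqrt{\omega}}{2}x\mp a_k\bigr)$ on each edge, I group the $k$ ``bump'' integrals and the $N-k$ ``tail'' integrals to obtain
\[
\|\bb\Phi_k^\alpha\|_2^2 \;=\; C(p)\,\omega^{\beta}\bigl[N\,G(1)+(2k-N)\,G(\xi)\bigr],\qquad \beta:=\tfrac{5-p}{2(p-1)},
\]
with $C(p)>0$ depending only on $p$, $\gamma:=\tfrac{3-p}{p-1}$, and $G(\xi):=\int_0^\xi(1-u^2)^{\gamma}\,du$. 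The constraint $\omega>\tfrac{\alpha^2}{(N-2k)^2}$ is exactly $|\xi|<1$; the map $\omega\mapsto|\xi|$ is strictly decreasing on the admissible range; and $\mathrm{sgn}(\xi)=-\mathrm{sgn}(\alpha)$ since $2k-N<0$.

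Differentiating in $\omega$ and using $\partial_\omega\xi=-\xi/(2\omega)$ gives $J_k(\omega)=C(p)\,\omega^{\beta-1}\Psi(\xi)$ with
\[
\Psi(\xi)=\beta N\,G(1)+(2k-N)\Bigl[\beta\,G(\xi)-\tfrac{\xi(1-\xi^2)^{\gamma}}{2}\Bigr].
\]
The decisive observation is the elementary identity $\beta-\tfrac12=\gamma$, which yields
\[
\frac{d}{d\xi}\Bigl[\beta G(\xi)-\tfrac{\xi(1-\xi^2)^{\gamma}}{2}\Bigr]=\gamma\,(1-\xi^2)^{\gamma-1}.
\]
Since the bracket vanishes at $\xi=0$, it equals $\gamma H(\xi)$ with $H(\xi):=\int_0^\xi(1-u^2)^{\gamma-1}\,du$. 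I therefore obtain the compact form
\[
\Psi(\xi)=\beta N\,G(1)+(2k-N)\gamma\,H(\xi),\qquad
\Psi'(\xi)=(2k-N)\gamma\,(1-\xi^2)^{\gamma-1},
\]
so $\Psi$ is strictly monotone on $(-1,1)$. A single integration by parts produces $(1+2\gamma)G(1)=2\gamma H(1)$ for $\gamma>0$, yielding the endpoint values $\Psi(1)=2k\beta G(1)$ and $\Psi(-1)=2(N-k)\beta G(1)$; when $\gamma\le 0$ these finite values are replaced by $\lim_{\xi\to\pm 1}H(\xi)=\pm\infty$, so $\Psi$ diverges accordingly at the endpoints.

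With this machinery the conclusion reduces to a sign analysis in each subcase, using $\beta>0\iff p<5$ and $\gamma>0\iff p<3$. For $\alpha<0$ (so $\xi\in(0,1)$): if $p\le 5$ then $\beta\ge 0$ makes both $\Psi(0^+)$ and $\Psi(1^-)$ non-negative, while strict monotonicity forbids interior sign changes, giving (i)(1); if $p>5$ then $\Psi(0^+)=\beta NG(1)<0$ while $(2k-N)\gamma>0$ combined with $H(\xi)\to+\infty$ at $\xi=1^-$ force $\Psi\to+\infty$, so strict monotonicity supplies a unique zero $\xi_k^*$ corresponding to $\omega_k^*$, yielding (i)(2). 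The case $\alpha>0$ (so $\xi\in(-1,0)$) is mirror, producing (ii)(1)--(ii)(3); the degenerate boundary cases $p=3$ and $p=5$ must be handled separately since $\gamma$ or $\beta$ vanishes. The main conceptual obstacle is spotting the identity $\beta-\tfrac12=\gamma$ that collapses $\Psi$ into a single monotone function of $\xi$; the remaining work is bookkeeping, and the only subtle point is translating zeros in $\xi$ back to the $\omega$-axis using that $|\xi|$ is strictly decreasing in $\omega$.
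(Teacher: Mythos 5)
Your proof is correct and follows essentially the same route as the paper: explicit computation of $\|\mathbf{\Phi}_k^\alpha\|_2^2$ via the substitution $t=\tanh(\cdot)$, followed by a monotonicity-plus-endpoint-limits argument whose engine, the identity $\Psi'(\xi)=(2k-N)\gamma(1-\xi^2)^{\gamma-1}$, is precisely the paper's formula \eqref{J'} for $\widetilde J_k'(\omega)$ rewritten in the variable $\xi$ (indeed $2\Psi(\xi)=\widetilde J_k(\omega)$). The only cosmetic differences are that you work in $\xi$ rather than $\omega$ and obtain the finite endpoint values $\Psi(\pm 1)$ by integration by parts where the paper evaluates the corresponding $\omega$-limits directly; just note that your side remark that $\Psi$ diverges at the endpoints ``when $\gamma\le 0$'' should read $\gamma<0$, since for $p=3$ one has $\gamma H\equiv 0$ and $\Psi\equiv N/2$, a degenerate case you already handle separately.
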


\begin{proof}
Recall that $\mathbf{\Phi}_{k}^\alpha=(\varphi_{k,j}^\alpha)_{j=1}^N$, where $\varphi_{k,j}^\alpha$ is defined by \eqref{Phi_k}.
Changing variables, we get
\begin{equation*}
\begin{split}
&\int\limits_{0}^\infty(\varphi_{k,j}^\alpha(x))^2dx\\&=\left(\frac{p+1}{2}\right)^{\tfrac 2{p-1}}\frac{2\omega^{\tfrac 2{p-1}-\tfrac 1{2}}}{p-1}\left\{
                    \begin{array}{ll}
                    \int\limits_{\tanh^{-1}\left(\tfrac{-\alpha}{(2k-N)\sqrt{\omega}}\right)}^\infty \sech^{\tfrac{4}{p-1}}ydy,& \hbox{$j=1,...,k$};\\
    \int\limits_{\tanh^{-1}\left(\tfrac{\alpha}{(2k-N)\sqrt{\omega}}\right)}^\infty \sech^{\tfrac{4}{p-1}}ydy,& \hbox{$j=k+1,...,N$}
                    \end{array}
                    \right.\\
                    &=\left(\frac{p+1}{2}\right)^{\tfrac 2{p-1}}\frac{2\omega^{\tfrac 2{p-1}-\tfrac 1{2}}}{p-1}\left\{ \begin{array}{ll}
                    \int\limits_{\tfrac{-\alpha}{(2k-N)\sqrt{\omega}}}^1(1-t^2)^{\tfrac 2{p-1}-1}dt,& \hbox{$j=1,...,k$};\\
     \int\limits_{\tfrac{\alpha}{(2k-N)\sqrt{\omega}}}^1(1-t^2)^{\tfrac 2{p-1}-1}dt,& \hbox{$j=k+1,...,N.$}
                    \end{array}
                    \right.
\end{split}
\end{equation*}
Therefore, we obtain
\begin{equation*}
\begin{split}
&||\mathbf{\Phi}_{k}^\alpha||_2^2=\left(\frac{p+1}{2}\right)^{\tfrac 2{p-1}}\frac{2\omega^{\tfrac 2{p-1}-\tfrac 1{2}}}{p-1}[\int\limits_{\tfrac{-\alpha}{(2k-N)\sqrt{\omega}}}^1k(1-t^2)^{\tfrac 2{p-1}-1}dt\\&+\int\limits_{\tfrac{\alpha}{(2k-N)\sqrt{\omega}}}^1(N-k)(1-t^2)^{\tfrac 2{p-1}-1}dt].
\end{split}
\end{equation*}

From the last  equality we get
 \begin{equation}\label{J}
 \begin{split}
& J_k(\omega)=C\omega^{\tfrac {7-3p}{2(p-1)}}\tfrac{5-p}{p-1}[\int\limits_{\tfrac{-\alpha}{(2k-N)\sqrt{\omega}}}^1k(1-t^2)^{\tfrac {3-p}{p-1}}dt+\int\limits_{\tfrac{\alpha}{(2k-N)\sqrt{\omega}}}^1(N-k)(1-t^2)^{\tfrac {3-p}{p-1}}dt]\\&-C\omega^{\tfrac {7-3p}{2(p-1)}}\tfrac{\alpha}{\sqrt{\omega}}\left(1-\tfrac{\alpha^2}{(N-2k)^2\omega}\right)^{\tfrac{3-p}{p-1}}=C\omega^{\tfrac {7-3p}{2(p-1)}}\widetilde{J}_k(\omega),
 \end{split}
 \end{equation}
 where $C=\frac{1}{p-1}\left(\frac{p+1}{2}\right)^{\tfrac 2{p-1}}>0$ and
 \begin{equation*}
 \begin{split}
& \widetilde{J}_k(\omega)=\tfrac{5-p}{p-1}\left(\int\limits_{\tfrac{-\alpha}{(2k-N)\sqrt{\omega}}}^1k(1-t^2)^{\tfrac {3-p}{p-1}}dt+\int\limits_{\tfrac{\alpha}{(2k-N)\sqrt{\omega}}}^1(N-k)(1-t^2)^{\tfrac {3-p}{p-1}}dt\right)\\
 &-\tfrac{\alpha}{\sqrt{\omega}}\left(1-\tfrac{\alpha^2}{(N-2k)^2\omega}\right)^{\tfrac{3-p}{p-1}}.
 \end{split}
 \end{equation*}
Thus,
\begin{equation}\label{J'}
\widetilde{J}'_k(\omega)=\tfrac{-\alpha}{\omega^{3/2}}\tfrac{3-p}{p-1}\left[\left(1-\tfrac{\alpha^2}{(N-2k)^2\omega}\right)^{\tfrac{3-p}{p-1}}+\tfrac{\alpha^2}{(N-2k)^2\omega}\left(1-\tfrac{\alpha^2}{(N-2k)^2\omega}\right)^{-\tfrac{2(p-2)}{p-1}}\right].
\end{equation}
$(i)$ Let $\alpha<0$.
 It is immediate that $J_k(\omega)>0$ for $1< p\leq 5$ which yields $1)$. Consider the  case $p>5$.  It is easily seen that
$$\lim\limits_{\omega\to \tfrac{\alpha^2}{(N-2k)^2}}\widetilde{J}_k(\omega)=\infty,\quad \lim\limits_{\omega\to \infty}\widetilde{J}_k(\omega)=\tfrac{5-p}{p-1}N \int\limits_{0}^1(1-t^2)^{\tfrac {3-p}{p-1}}dt<0.$$
 Moreover, from \eqref{J'} it follows that $\widetilde{J}'_k(\omega)<0$ for $\omega> \tfrac{\alpha^2}{(N-2k)^2}$ and consequently $J_k(\omega)$ is strictly decreasing. Therefore, there exists a unique $\omega_k^*>\tfrac{\alpha^2}{(N-2k)^2}$ such that $\widetilde{J}_k(\omega_k^*)=J_k(\omega_k^*)=0$, consequently  $J_k(\omega)>0$ for $\omega\in\left(\tfrac{\alpha^2}{(N-2k)^2},\omega_k^*\right)$
 and $J_k(\omega)<0$ for $\omega\in(\omega_k^*, \infty)$, and the proof of $(i)-2)$ is completed.

 $(ii)$ Let $\alpha>0$. It is easily seen that $\widetilde{J}_k(\omega)<0$  for $p\geq 5$, thus, $3)$ holds.
Let $1<p<5$. It can be easily verified that
\begin{equation}\label{slo4}
 \lim_{\omega\to \infty}\widetilde{J}_k(\omega)=\frac{5-p}{p-1}N\int_0^{1}(1-t^2)^{\tfrac {3-p}{p-1}}dt>0,
 \end{equation}
and
\begin{equation}\label{slo5}
\begin{split}
\lim_{\omega\to \frac{\alpha^2}{(N-2k)^2}}\widetilde{J}_k(\omega)=\left\{
                     \begin{array}{ll}
                      \frac{5-p}{p-1}(N-k)\int_{-1}^{1}(1-t^2)^{\tfrac {3-p}{p-1}}dt>0, & \hbox{$p\in (1, 3]$,} \\
                       -\infty, & \hbox{$p\in (3,5)$.}
                     \end{array} \right.
\end{split}
\end{equation}
 Let $1<p\leq 3$, using the fact that  $\widetilde{J}'_k(\omega)<0$ we get from \eqref{slo4}-\eqref{slo5} the inequality $J_k(\omega)>0$, and $(ii)-1)$ holds.
 Let $3<p<5$, then $\widetilde{J}'_k(\omega)>0$, therefore, from \eqref{slo4}-\eqref{slo5} it follows that there exists $\hat{\omega}_k>\tfrac{\alpha^2}{(N-2k)^2}$ such that $\widetilde{J}_k(\hat{\omega}_k)=J_k(\hat{\omega}_k)=0$, moreover,  $J_k(\omega)<0$ for $(\tfrac{\alpha^2}{(N-2k)^2}, \hat{\omega}_k)$, and $J_k(\omega)>0$ for $(\hat{\omega}_k, \infty)$, i.e. $(ii)-2)$ is proved.
   \end{proof}

   \vspace*{4pt}\noindent{\it Proof of Theorem \ref{main}.}
$(i)$
 Let $\alpha<0$.
    Due to Theorem \ref{7main}, we have $n(\bb H^\alpha_{k})=2$ in $L_k^2(\mathcal{G})$. Therefore, by Proposition \ref{slope_graph'}$(i)$ we obtain
$$
n(\bb H_{k}^\alpha)-p(\omega)=1
$$
for $1<p\leq 5,\omega>\tfrac{\alpha^2}{(N-2k)^2}$,  and for $p>5$, $\omega\in (\tfrac{\alpha^2}{(N-2k)^2},\omega^*_k).$
Thus, from Theorem \ref{stabil_graph} (see also  Remark \ref{nonstab}$(ii)-(iii)$) we get the assertions $(i)-1)$ and $(i)-2)$ in $\EE_k$. Since $\EE_k\subset\EE$, we get the results in $\EE$.

$(ii)$ Let $\alpha>0$. Due to Theorem \ref{7main}, we have $n(\bb H^\alpha_{k})=1$ in $L_k^2(\mathcal{G})$. Therefore, by Proposition \ref{slope_graph'}$(ii)$ we obtain
$$
n(\bb H_{k}^\alpha)-p(\omega)=1
$$
for $p\geq 5, \omega>\tfrac{\alpha^2}{(N-2k)^2}$ and $3<p<5, \omega\in (\tfrac{\alpha^2}{(N-2k)^2},\hat{\omega}_k)$. Hence  we get instability of $e^{i\omega t}\mathbf{\Phi}_{k}^\alpha$ in $\EE_k$ and consequently in $\EE$.
From the other hand, for $1<p\leq 3,\omega>\tfrac{\alpha^2}{(N-2k)^2}$  and $3<p<5, \omega\in(\hat{\omega}_k,\infty)$, we have
$$
n(\bb H_{k}^\alpha)=p(\omega)=1,
$$
which yields stability of $e^{i\omega t}\mathbf{\Phi}_{k}^\alpha$ in $\EE_k$. Thus, $(ii)$ is proved.
 \hfill$\square$
 \begin{remark} \label{p}
 \begin{enumerate}
\item[$(i)$] Let $p>5$, $\alpha<0$ and $\omega>\omega^*_k$, then  $
n(\bb H_{k}^\alpha)-p(\omega)=2$ in $L^2_k(\mathcal{G})$, and therefore Theorem \ref{stabil_graph} does not provide any information about stability of $\bb \Phi_k$.

\item[$(ii)$] Let  $p>3$, then the orbital instability  results follow from the spectral instability of $\bb \Phi_k$ applying the approach by \cite{HenPer82} (see Remark \ref{nonstab}$(iii)$).
 \end{enumerate}
\end{remark}

\subsection{The  Kirchhoff  condition}\label{Kirch}
\

\noindent{\it Proof of Theorem \ref{Kirsh}.} The action functional for $\alpha=0$ has the form
$$ S_0(\mathbf{V})=\tfrac 1{2}||\mathbf{V}'||_2^2+\tfrac{\omega}{2}||\mathbf{V}||_2^2 -\tfrac 1{p+1}||\mathbf{V}||_{p+1}^{p+1}, \quad \mathbf{V}=(v_j)_{j=1}^N\in\EE.$$
Then $S''_0(\bb \Phi_0)=:\bb H_0=\left(\begin{array}{cc}
\bb L_1^0& \bb 0\\
\bb 0& \bb L_2^0
\end{array}\right),$ where
\begin{equation*}\label{L_1-L_2}
 \begin{split}
 &\bb L^0_1=\left(\Big(-\frac{d^2}{dx^2}+\omega-p\varphi_0^{p-1}\Big)\delta_{i,j}\right), \quad \bb L^0_2=\left(\Big(-\frac{d^2}{dx^2}+\omega-\varphi_0^{p-1}\Big)\delta_{i,j}\right),\\
 &\dom(\bb L^0_1)=\dom(\bb L^0_2)=\left\{\mathbf{V}\in H^2(\mathcal{G}): v_1(0)=...=v_N(0),\,\,\sum\limits_{j=1}^N  v_j'(0)=0\right\}.
 \end{split}
 \end{equation*}
 Our idea is to apply the stability Theorem \ref{stabil_graph} (substituting $\bb L^\alpha_{1,k}$ and  $\bb L^\alpha_{2,k}$ by  $\bb L^0_{1}$ and  $\bb L^0_{2}$ respectively, and $\bb \Phi_k$ by $\bb \Phi_0$).

 The spectrum of  $\bb L^0_{1}$ has been studied in Theorem \ref{spect_L^0_1}.  Note that in $L_{\eq}^2(\mathcal{G})$ the kernel of  $\bb L^0_{1}$  is empty, moreover, $n(\bb L^0_{1}|_{L_{\eq}^2(\mathcal{G})})=1$ since  $\bb \Phi_0\in L_{\eq}^2(\mathcal{G})$ and $(\bb L^0_{1}\bb \Phi_0, \bb \Phi_0)_2<0$.
 It is easy to show that   $\bb L^0_{2}\geq 0$ and $\ker(\bb L^0_{2})=\Span\{\bb \Phi_0\}$ (see the proof of Proposition \ref{grafoN2}$(i)$).

 To complete the proof we need to study the sign of $\partial_\omega||\bb \Phi_0||_2^2$. From \eqref{J} for $k=0$ and $\alpha=0$ it follows that
 $$\partial_\omega||\bb \Phi_0||_2^2=\tfrac{N}{p-1}\left(\tfrac{p+1}{2}\right)^{\tfrac 2{p-1}}\omega^{\tfrac {7-3p}{2(p-1)}}\tfrac{5-p}{p-1}\int\limits_{0}^1(1-t^2)^{\tfrac {3-p}{p-1}}dt,$$
 which is obviously positive for $1<p<5$, and is negative for $p>5$.
 Finally, using  $n(\bb H_0|_{L_{\eq}^2(\mathcal{G})})=1$, by Theorem \ref{stabil_graph}, for $1<p<5$ we get stability of $e^{i\omega t}\bb \Phi_0(x)$ in $\EE_{\eq}$, and  for $p>5$  instability of
$e^{i\omega t}\bb \Phi_0(x)$  in $\EE_{\eq}$ and consequently in $\EE$.
\hfill$\square$
\begin{remark}
\begin{itemize}
\item[$(i)$] An interesting connection with a problem on the line is due to the fact that the space $L^2_{\eq}(\mathcal{G})$  is similar to the one studied in \cite{DeiPar11}.
\item[$(ii)$] Note that the orbital instability part of the above theorem follows from the spectral instability since $p>5$ (see Remark \ref{nonstab}$(iii)$).
\end{itemize}
\end{remark}

 \section{ The orbital stability of standing waves of the NLS-$\delta$   equation with repulsive nonlinearity}\label{repuls}\label{rep}

In this section we study the  orbital stability of the standing waves of the  NLS-$\delta$   equation with repulsive nonlinearity ($\mu=-1$ in \eqref{NLS_graph}). The case $\mathcal{G}=\mathbb{R}$ was considered in \cite{KamOht09}. The profile $\bb \Phi(x)$ of the standing wave $e^{i\omega t}\bb\Phi(x)$ satisfies the equation
\begin{equation}\label{H_alpha_rep}
\mathbf{H}^\alpha_\delta\mathbf{\Phi}+\omega\mathbf{\Phi}+|\mathbf{\Phi}|^{p-1}\mathbf{\Phi}=0,\quad \bb \Phi\in D_\alpha.
\end{equation}
Equivalently $\bb\Phi$ is a critical point of the  action functional defined as
\begin{equation*}\label{S_rep}
 S_{\rep}(\mathbf{V})=\tfrac 1{2}||\mathbf{V}'||_2^2+\tfrac{\omega}{2}||\mathbf{V}||_2^2 +\tfrac 1{p+1}||\mathbf{V}||_{p+1}^{p+1}+\tfrac{\alpha}{2}|v_1(0)|^2, \quad \mathbf{V}=(v_j)_{j=1}^N\in\EE.
 \end{equation*}
In the following theorem we describe the solutions to equation \eqref{H_alpha_rep}.
 \begin{theorem}
 Let $\alpha<0$  and $0<\omega< \tfrac{\alpha^2}{N^2}$. Then equation  \eqref{H_alpha_rep} has the unique solution (up to permutations of the  edges of $\mathcal{G}$)
 $\bb \Phi_\alpha=(\varphi_{\alpha})_{j=1}^N$, where
 \begin{equation*}\label{varphi_rep}
 \varphi_\alpha(x)=\left[\frac{(p+1)\omega}{2} \csch^2\left(\frac{(p-1)\sqrt{\omega}}{2}x+\coth^{-1}\left(\frac{-\alpha}{N\sqrt{\omega}}\right)\right)\right]^{\frac{1}{p-1}}.
 \end{equation*}
 \end{theorem}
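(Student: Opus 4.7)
The strategy is classical ODE analysis on each half-line combined with the interface conditions encoded by the domain $D_\alpha$. Any solution $\bb \Phi=(u_j)_{j=1}^N\in D_\alpha$ of \eqref{H_alpha_rep} satisfies, componentwise on $\mathbb{R}_+$, the real ODE $-u_j''+\omega u_j+|u_j|^{p-1}u_j=0$. Multiplying by $u_j'$ and integrating gives the first integral
\[
(u_j')^2=\omega u_j^2+\frac{2}{p+1}|u_j|^{p+1}+C_j.
\]
Since $\bb \Phi\in D_\alpha\subset H^2(\mathcal{G})$, we have $u_j,u_j'\to 0$ at $+\infty$ by Sobolev embedding, which forces $C_j=0$. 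Consequently $(u_j')^2=\omega u_j^2+\frac{2}{p+1}|u_j|^{p+1}\geq 0$, vanishing only where $u_j=0$, so each nontrivial $u_j$ is strictly monotonic and of constant sign, with $u_j u_j'<0$ to ensure decay.

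Next I would integrate the separable equation to classify the nonzero decaying solutions. The substitution $u_j(x)=\left(\tfrac{(p+1)\omega}{2}\right)^{1/(p-1)}\psi(\beta x)$ with $\beta=\tfrac{(p-1)\sqrt{\omega}}{2}$ reduces the first integral to $(\psi')^2=\psi^2(1+\psi^{p-1})$ (up to the sign chosen for decay), whose decaying solutions are exactly $\psi(y)=\csch^{2/(p-1)}(y+a)$ with a free shift $a>0$ (needed to avoid the singularity of $\csch$). Thus each nonzero component has the form
\[
u_j(x)=s_j\left[\tfrac{(p+1)\omega}{2}\csch^2\bigl(\beta x+a_j\bigr)\right]^{1/(p-1)}, \qquad s_j\in\{-1,+1\},\ a_j>0,
\]
and direct differentiation gives the useful identity $u_j'(0)=-s_j\sqrt{\omega}\coth(a_j)|u_j(0)|$.

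Now I impose the two conditions of $D_\alpha$. From $u_1(0)=\dots=u_N(0)$, and since any nontrivial $u_j$ has $u_j(0)\neq 0$ while the zero component has $u_j(0)=0$, either all components vanish (trivial) or none does. In the nontrivial case, up to the global sign symmetry $\bb \Phi\mapsto -\bb \Phi$ one may take the common value $v_0:=u_1(0)>0$, which forces $s_j=+1$ for all $j$; monotonicity of $a\mapsto\csch(a)$ on $(0,\infty)$ then yields $a_1=\dots=a_N=:a$. Finally, the Kirchhoff-type condition $\sum_j u_j'(0)=\alpha v_0$ becomes
\[
-N\sqrt{\omega}\coth(a)=\alpha,\qquad\text{i.e.}\qquad \coth(a)=\frac{-\alpha}{N\sqrt{\omega}}.
\]
Since $\coth:(0,\infty)\to(1,\infty)$ is a strict bijection, this equation admits a (unique) solution $a>0$ if and only if $-\alpha/(N\sqrt{\omega})>1$, i.e.\ $\alpha<0$ and $0<\omega<\alpha^2/N^2$; in that case $a=\coth^{-1}(-\alpha/(N\sqrt{\omega}))$, yielding exactly the announced profile $\bb \Phi_\alpha$.

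I do not anticipate any serious obstacle: the analysis is entirely pointwise, the first integral pinpoints the $H^2$-admissible solutions explicitly, and both interface conditions are algebraic once the shift structure is identified. The one subtle point, worth stating clearly, is the $H^2$-integrability argument that rules out $C_j\neq 0$ (oscillatory or unbounded orbits in the phase portrait), which is what reduces a two-parameter family of local solutions to a one-parameter decaying family and is the source of the restriction $\omega<\alpha^2/N^2$ through the range of $\coth$.
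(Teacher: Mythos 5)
Your proposal is correct and follows essentially the same route as the paper: reduce to the componentwise ODE, identify the one-parameter family of decaying $\csch$-tails, and then let the continuity condition force a common shift (and common phase/sign) while the flux condition yields $N\coth(a)=-\alpha/\sqrt{\omega}$, whence the constraint $0<\omega<\alpha^2/N^2$ and the unique value of $a$. The only difference is cosmetic: you derive the classification of $L^2(\mathbb{R}_+)$-solutions from the first integral, whereas the paper simply cites it (from Kaminaga--Ohta) and allows a unimodular complex factor $\sigma$ in place of your real sign $s_j$, which is harmless since the profile is only determined up to a global phase.
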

\begin{proof}
Notice that $\mathbf{H}^\alpha_\delta$ acts componentwise  as the Laplacian, thus  if $\bb \Phi=(\varphi_j)_{j=1}^N$ is the solution to \eqref{H_alpha_rep}, then $\varphi_j$ is the $L^2(\mathbb{R}_+)$-solution to the equation
%%%
\begin{equation}\label{psi_rep}
-\varphi_j''+\omega \varphi_j + |\varphi_j|^{p-1} \varphi_j = 0.
\end{equation}
%%%
The most general $L^2 (\mathbb{R}_+)$-solution  to \eqref{psi_rep} is $$\varphi(x) =\sigma\left[\frac{(p+1)\omega}{2} \csch^2\left(\frac{(p-1)\sqrt{\omega}}{2}x+y\right)\right]^{\frac{1}{p-1}},$$
%%%
where $\sigma \in \mathbb{C}$, $|\sigma|=1$ and $y\in\mathbb{R}$ (see \cite{KamOht09}). Therefore, the components $\varphi_j$
of the solution   $\bb\Phi$ to \eqref{H_alpha_rep} are given by
\begin{equation*}
 %\label{ikebana}
\varphi_j(x)=\sigma_j\left[\frac{(p+1)\omega}{2} \csch^2\left(\frac{(p-1)\sqrt{\omega}}{2}x+y_j\right)\right]^{\frac{1}{p-1}}.
\end{equation*}
%%%
In order to solve \eqref{H_alpha_rep} we need to impose boundary conditions  \eqref{D_alpha}.  The
continuity condition in \eqref{D_alpha} and existence of the limits $\lim\limits_{x\rightarrow 0+}\varphi_j(x)$ imply that  $y_1=...=y_N= a>0$ and $\sigma_1=\ldots=\sigma_N=\sigma$. We can omit the dependence on $\sigma$ without losing generality.
 The second boundary condition in
\eqref{D_alpha} rewrites as
\begin{equation}\label{eq-a}
N\coth(a)=\frac{-\alpha}{\sqrt{\omega}}.
\end{equation}
From equation  \eqref{eq-a} it follows that $0<\omega< \tfrac{\alpha^2}{N^2}$ and $a=\coth^{-1}\left(\frac{-\alpha}{N\sqrt{\omega}}\right)$.
\end{proof}

 \begin{remark} Note that, in contrast to the NLS-$\delta$ equation with focusing nonlinearity, the solution to \eqref{H_alpha_rep} does not exist for $\alpha\geq 0$ due to the fact that the parameter $a$ in \eqref{eq-a} has to be positive to guarantee the existence of $\lim\limits_{x\rightarrow 0+}\varphi_j(x)$.
 \end{remark}

  \vspace*{4pt}\noindent{\it Proof of Theorem \ref{main_rep}.}
The proof of the particular case $\mathcal{G}=\mathbb{R}$ was given in \cite{KamOht09}.

The global well-posedness  of the Cauchy problem  for  $\mu=-1$  follows from Lemma \ref{glob_well}.
Analogously to the previous case, the second variation of $S_{\rep}$ at $\mathbf{\Phi_\alpha}$ can be written formally
\begin{equation}\label{BBSrep}
(S_{\rep})''(\mathbf{\Phi}_\alpha)=:\bb H_{\rep}^\alpha = \left(\begin{array}{cc} \mathbf{L}_{1,\rep}^\alpha & \bb 0 \\ \bb 0 &\mathbf{L}_{2,\rep}^\alpha \end{array} \right),
\end{equation}
with
 \begin{equation*}\label{linear_alpha}
\begin{split} &\mathbf{L}_{1,\rep}^\alpha=\left(\Big(-\frac{d^2}{dx^2}+\omega+p\varphi_{\alpha}^{p-1}\Big)\delta_{i,j}\right), \\ &\mathbf{L}_{2,\rep}^\alpha=\left(\Big(-\frac{d^2}{dx^2}+\omega+\varphi_{\alpha}^{p-1}\Big)\delta_{i,j}\right),\,\, \dom(\mathbf{L}_{1,\rep}^\alpha)=\dom(\mathbf{L}_{2,\rep}^\alpha)=D_\alpha,
\end{split}
\end{equation*}
where $\delta_{i,j}$ is the Kronecker symbol.

Let us show that $\ker(\mathbf{L}_{2,\rep}^\alpha)=\Span\{\mathbf{\Phi}_\alpha\}$. It is obvious that $\mathbf{\Phi}_\alpha\in \ker(\mathbf{L}_{2,\rep}^\alpha)$. Any $\mathbf{V}=(v_j)_{j=1}^N\in D_\alpha$ satisfies the following identity
\begin{equation*}\label{identity_graph'}
-v_j''+\omega v_j+\varphi_{\alpha}^{p-1}v_j=
\frac{-1}{\varphi_{\alpha}}\frac{d}{dx}\left[\varphi_{\alpha}^2\frac{d}{dx}\left(\frac{v_j}{\varphi_{\alpha}}\right)\right],\quad x>0.
\end{equation*}
Then we get for any $\mathbf{V}=(v_j)_{j=1}^N\in D_\alpha\setminus\Span\{\mathbf{\Phi}_\alpha\}$
\begin{equation*}
\begin{split}
&(\mathbf{L}_{2,\rep}^\alpha\mathbf{V},\mathbf{V})_2=
\sum\limits_{j=1}^N\int\limits^{\infty}_{0}\varphi_{\alpha}^2\left[\frac{d}{dx}\left(\frac{v_j}{\varphi_{\alpha}}\right)\right]^2dx+
\sum\limits_{j=1}^N\left[-v_j'v_j+v_j^2\frac{\varphi_{\alpha}'}{\varphi_{\alpha}}\right]^{\infty}_{0}\\
&=\sum\limits_{j=1}^N\int\limits^{\infty}_{0}\varphi_{\alpha}^2\left[\frac{d}{dx}\left(\frac{v_j}{\varphi_{\alpha}}\right)\right]^2dx+\sum\limits_{j=1}^N\left[v_j'(0)v_j(0)-v_j^2(0)\frac{\varphi_{\alpha}'(0)}{\varphi_{\alpha}(0)}\right]\\&=\sum\limits_{j=1}^N\int\limits^{\infty}_{0}\varphi_{j}^2\left[\frac{d}{dx}\left(\frac{v_j}{\varphi_{\alpha}}\right)\right]^2dx> 0.
\end{split}
\end{equation*}
Thus, $\ker(\mathbf{L}_{2,\rep}^\alpha)=\Span\{\mathbf{\Phi}_\alpha\}$.
The  inequality
$$
(\mathbf{L}_{1,\rep}^\alpha\bb V,\bb V)_2>(\mathbf{L}_{2,\rep}^\alpha\bb V,\bb V)_2,\quad \bb V\in D_\alpha\setminus\{\bb 0\},
$$
implies immediately that  $\mathbf{L}_{1,\rep}^\alpha\geq 0$ and $\ker(\mathbf{L}_{1,\rep}^\alpha)=\{\bb 0\}$.

By Weyl's theorem,  the essential spectrum of $\mathbf{L}_{1,\rep}^\alpha$ and $\mathbf{L}_{2,\rep}^\alpha$ coincides with $[\omega,\infty)$. Moreover,  there can be only finitely many isolated eigenvalues  in $(-\infty, \omega)$. Thus, except the zero eigenvalue of $\mathbf{L}_{2,\rep}^\alpha$, the spectrum of $\mathbf{L}_{1,\rep}^\alpha$ and $\mathbf{L}_{2,\rep}^\alpha$ is positive and  bounded away from zero. Therefore,  using the  classical Lyapunov analysis and noting that  $\bb H_{\rep}^\alpha$ is non-negative due to \eqref{BBSrep}, we obtain  that $e^{i\omega t}\bb \Phi_\alpha$ is orbitally stable.
\hfill$\square$
 \section*{Appendix}\label{app}
For convenience of the reader  we formulate the following two  results from  the extension theory (see \cite{Nai67}) essentially used in  our stability analysis. The first one reads as follows.

\begin{proposition}\label{d5} (von Neumann decomposition)
Let $A$ be a closed densely defined  symmetric operator. Then the following decomposition holds
\begin{equation}\label{d6}
\dom(A^*)=\dom(A)\oplus\mathcal N_{+}(A)\oplus\mathcal N_{-}(A).
\end{equation}
Therefore, for $u\in \dom(A^*)$ such that  $u=f+f_i+f_{-i}$, with $f\in \dom(A), f_{\pm i}\in \mathcal{N}_{\pm}(A)$, we get
\begin{equation*}\label{d6a}
A^*u=Af+if_i-if_{-i}.
\end{equation*}
\end{proposition}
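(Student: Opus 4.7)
The plan is to equip $\dom(A^*)$ with the graph inner product
\[
\langle u,v\rangle_{A^*}:=(u,v)+(A^*u,A^*v),
\]
which turns $\dom(A^*)$ into a Hilbert space (here we use that $A$ is closed and densely defined, so $A^*$ is itself closed, ensuring that the graph norm is complete). In this Hilbert space I would show that $\dom(A)$, $\mathcal N_+(A)=\ker(A^*-iI)$ and $\mathcal N_-(A)=\ker(A^*+iI)$ are closed subspaces. Closedness of the deficiency subspaces follows from the closedness of $A^*$; closedness of $\dom(A)$ with respect to $\langle\cdot,\cdot\rangle_{A^*}$ is the statement that $A$ itself is closed.

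Next I would verify the three pairwise orthogonality relations with respect to $\langle\cdot,\cdot\rangle_{A^*}$. For $f\in\dom(A)$ and $f_i\in\mathcal N_+(A)$, using $A^*f=Af$, $A^*f_i=if_i$ and $(f,A^*f_i)=(Af,f_i)$ (because $f\in\dom(A)$ pairs with $A^*$ via the defining identity of the adjoint), a short computation gives $\langle f,f_i\rangle_{A^*}=(f,f_i)+(Af,if_i)=(f,f_i)-(f,f_i)=0$. The case $f_{-i}\in\mathcal N_-$ is analogous, and for $f_i\in\mathcal N_+$, $f_{-i}\in\mathcal N_-$ one gets $\langle f_i,f_{-i}\rangle_{A^*}=(f_i,f_{-i})+(if_i,-if_{-i})=0$.

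The main obstacle is to prove completeness, i.e.\ that $\dom(A)\oplus\mathcal N_+(A)\oplus\mathcal N_-(A)$ is all of $\dom(A^*)$. I would argue by taking $u\in\dom(A^*)$ orthogonal (in the graph inner product) to each of the three subspaces and showing $u=0$. Graph-orthogonality of $u$ to $\dom(A)$ says $(A^*u,Af)=-(u,f)$ for every $f\in\dom(A)$, which means $A^*u\in\dom((A)^*)=\dom(A^*)$ with $A^*(A^*u)=-u$; hence $((A^*)^2+I)u=0$, and the factorization $(A^*+iI)(A^*-iI)u=0$ forces $(A^*-iI)u\in\mathcal N_-(A)$. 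Combining with graph-orthogonality to $\mathcal N_+(A)$ and $\mathcal N_-(A)$ (and using the orthogonality computation above applied in reverse to read off coefficients) one concludes successively that $(A^*-iI)u=0$ and then $u=0$. Thus every $u\in\dom(A^*)$ decomposes uniquely as $u=f+f_i+f_{-i}$ with $f\in\dom(A)$, $f_{\pm i}\in\mathcal N_{\pm}(A)$.

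Finally, the action formula $A^*u=Af+if_i-if_{-i}$ is immediate by linearity of $A^*$ and the identities $A^*|_{\dom(A)}=A$, $A^*f_{\pm i}=\pm if_{\pm i}$. The only delicate point in the whole argument is the completeness step above; the orthogonality relations and the action formula are purely formal consequences of the definitions.
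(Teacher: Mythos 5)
Your proof is correct and is the classical von Neumann argument: orthogonal decomposition of $\dom(A^*)$ under the graph inner product, with closedness of $A$ (hence $A^{**}=A$) used exactly where needed in the completeness step to conclude $((A^*)^2+I)u=0$. The paper offers no proof of its own, simply citing Naimark's book, and the argument found there is essentially the one you give, so there is nothing to reconcile.
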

\begin{remark} The direct sum in \eqref{d6} is not necessarily orthogonal.
\end{remark}
\begin{proposition}\label{semibounded}
Let $A$  be a densely defined lower semi-bounded symmetric operator (that is, $A\geq mI$)  with finite deficiency indices $n_{\pm}(A)=n<\infty$  in the Hilbert space $\mathcal{H}$, and let $\widetilde{A}$ be a self-adjoint extension of $A$.  Then the spectrum of $\widetilde{A}$  in $(-\infty, m)$ is discrete and  consists of at most $n$  eigenvalues counting multiplicities.
\end{proposition}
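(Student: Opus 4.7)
The strategy is to compare $\widetilde{A}$ with the Friedrichs extension $A_F$ of $A$ and exploit the fact that two self-adjoint extensions of the same densely defined symmetric operator with finite deficiency indices differ by a finite-rank perturbation at the level of their resolvents.

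First, I would construct $A_F$ via the closure of the quadratic form $q(u,v)=(Au,v)$ on $\dom(A)$; since $A\geq mI$, this yields a self-adjoint extension preserving the lower bound, so $A_F\geq mI$ and hence $\sigma(A_F)\subset[m,\infty)$ and $A_F$ has no spectrum in $(-\infty,m)$.

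Second, I would invoke Krein's resolvent formula for the pair $\widetilde{A}$, $A_F$: for every $z\in\rho(\widetilde{A})\cap\rho(A_F)$ the operator
$$K(z):=(\widetilde{A}-zI)^{-1}-(A_F-zI)^{-1}$$
has rank at most $n$, with range contained in $\ker(A^*-\bar z I)$ (this is the content of the standard parametrization of self-adjoint extensions via boundary triples, or can be derived directly from Proposition \ref{d5}). Since finite-rank operators are compact, Weyl's theorem on the invariance of the essential spectrum under compact perturbations of the resolvent yields $\sigma_{\ess}(\widetilde{A})=\sigma_{\ess}(A_F)\subset[m,\infty)$. Consequently, every point of $\sigma(\widetilde{A})\cap(-\infty,m)$ is an isolated eigenvalue of finite multiplicity, which settles the discreteness part of the claim.

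Finally, to count the eigenvalues below $m$ I would use a Birman-type min-max argument. Fix a real $z<m$ so that $A_F-zI>0$ and $\widetilde{A}-zI$ has at most finitely many negative eigenvalues (with all other spectrum positive, by Weyl's theorem). The map $t\mapsto(t-z)^{-1}$ is strictly decreasing and continuous on $(-\infty,m)$, so the eigenvalues of $\widetilde{A}$ in $(-\infty,m)$ are in bijection (with multiplicities) with the eigenvalues of the bounded self-adjoint operator $(\widetilde{A}-zI)^{-1}$ lying in the interval $((m-z)^{-1},\infty)\cup(-\infty,0)$; an analogous statement holds for $A_F$, which contributes no such eigenvalues. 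Writing $K(z)=\sum_{j=1}^{r}\mu_j(\cdot,e_j)e_j$ with $r\leq n$ and applying the Courant--Fischer principle to the rank-one increments successively, one obtains $|N_\lambda(\widetilde{A})-N_\lambda(A_F)|\leq n$ for any $\lambda<m$, where $N_\lambda(T)$ is the number of eigenvalues of $T$ strictly below $\lambda$ counted with multiplicity. Since $N_\lambda(A_F)=0$, this gives $N_\lambda(\widetilde{A})\leq n$, and letting $\lambda\uparrow m$ completes the proof.

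The main obstacle is the last step: the Friedrichs construction and the rank bound on the resolvent difference are classical, but one has to track signs and monotonicity carefully through the Cayley-type transform $t\mapsto(t-z)^{-1}$ in order to translate a rank-$n$ bounded perturbation into an eigenvalue-counting inequality for the unbounded operators, and to verify that eigenvalue multiplicities are preserved. Once this bookkeeping is done, the conclusion follows directly from min-max.
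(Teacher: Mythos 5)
Your argument is essentially correct, but note that the paper does not prove this proposition at all: it is stated in the Appendix as a classical fact and attributed to \cite{Nai67}, so there is no internal proof to match. The textbook argument behind that citation is much shorter and more elementary than yours, and uses only the von Neumann decomposition (Proposition \ref{d5} of the paper): since $\dom(\widetilde{A})=\dom(A)\dotplus(I+V)\mathcal{N}_+(A)$ for an isometry $V:\mathcal{N}_+(A)\to\mathcal{N}_-(A)$, the quotient $\dom(\widetilde{A})/\dom(A)$ has dimension $n$; if $\widetilde{A}$ had essential spectrum in $(-\infty,m)$ or more than $n$ eigenvalues there, the spectral projection onto $(-\infty,m']$ for a suitable $m'<m$ would have rank at least $n+1$, its range lies in $\dom(\widetilde{A})$ and hence meets $\dom(A)$ nontrivially, and any nonzero $u$ in that intersection satisfies simultaneously $(\widetilde{A}u,u)\le m'\|u\|^2$ and $(\widetilde{A}u,u)=(Au,u)\ge m\|u\|^2$, a contradiction. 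This settles discreteness and the count in one stroke. Your route through the Friedrichs extension, Krein's resolvent formula, Weyl's essential-spectrum theorem and min-max buys a cleaner separation of the two conclusions (the essential-spectrum statement comes for free from compactness of the resolvent difference) and generalizes to situations where one only knows the resolvent difference is finite rank, but it imports considerably more machinery than the statement requires.

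Two small points of care in your version. First, the range of $K(z)$ lies in $\ker(A^*-zI)$, not $\ker(A^*-\bar zI)$ (immaterial for the real $z$ you actually use). Second, and more substantively, the inequality $|N_\lambda(\widetilde{A})-N_\lambda(A_F)|\le n$ is the one place where a careless estimate gives $2n$ rather than $n$: bounding separately the eigenvalues of $(A_F-zI)^{-1}+K(z)$ that escape above $(m-z)^{-1}$ and those that escape below $0$ by $\operatorname{rank}K(z)$ each would double the count. Your proposed remedy (processing the rank-one increments $\mu_j(\cdot,e_j)e_j$ successively) does work, but only because each increment has a definite sign: an increment with $\mu_j>0$ pushes min-max values up, so it can add at most one eigenvalue above $(m-z)^{-1}$ and none below $0$, and symmetrically for $\mu_j<0$; hence each increment raises the total count outside $[0,(m-z)^{-1}]$ by at most one, giving $n$ in total. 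That sentence should be spelled out, since it is the crux of the counting step.
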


Below, using the above abstract results,  we provide  an estimate for  the Morse index of the operator  $\mathbf{L}^\alpha_{1,k}$ defined in Theorem \ref{repres} in the whole space $L^2(\mathcal{G})$.%  We expect that the result %established below can be useful for a future study of the stability properties of the standing  waves $e^{i\omega %t}\mathbf{\Phi}_{k}^\alpha$ in the cases left open  (see Theorem \ref{main}).

 \begin{proposition}\label{grafoN'}
Let $\alpha\neq 0$,   $k\in\left\{1,...,\left[\tfrac{N-1}{2}\right]\right\}$ and  $\omega>\tfrac{\alpha^2}{(N-2k)^2}$. Then the following assertions hold.
\begin{itemize}
\item[$(i)$] If  $\alpha< 0$, then $n(\mathbf{L}^\alpha_{1,k})\leq k+1$.
\item[$(ii)$] If $\alpha>0$, then $n(\mathbf{L}^\alpha_{1,k})\leq N-k$.
  \end{itemize}
\end{proposition}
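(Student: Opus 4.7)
The plan is to exploit the permutation symmetry of the profile $\mathbf{\Phi}_k$. Since $\varphi_{k,j}$ depends only on whether $j\le k$ or $j>k$, the operator $\mathbf{L}^\alpha_{1,k}$ commutes with the natural unitary action of $S_k\times S_{N-k}$ permuting the first $k$ and the last $N-k$ edges. Consequently both the invariant subspace $L^2_k(\mathcal G)$ and its orthogonal complement $(L^2_k(\mathcal G))^\perp=\{V\in L^2(\mathcal G):\sum_{j=1}^k v_j=0,\ \sum_{j=k+1}^N v_j=0\}$ reduce $\mathbf{L}^\alpha_{1,k}$, so that
\begin{equation*}
n(\mathbf{L}^\alpha_{1,k})=n\bigl(\mathbf{L}^\alpha_{1,k}|_{L^2_k(\mathcal G)}\bigr)+n\bigl(\mathbf{L}^\alpha_{1,k}|_{(L^2_k(\mathcal G))^\perp}\bigr).
\end{equation*}
Proposition \ref{n(L_1)} already handles the first summand (it equals $2$ if $\alpha<0$ and $1$ if $\alpha>0$), so the task is to bound the Morse index on the complement.

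The key observation is that on $(L^2_k(\mathcal G))^\perp$ the $\delta$-interaction decouples. Indeed, if $V\in \dom(\mathbf{L}^\alpha_{1,k})\cap (L^2_k(\mathcal G))^\perp$, then the continuity condition $v_1(0)=\cdots=v_N(0)$ combined with $kv_1(0)=\sum_{j=1}^k v_j(0)=0$ forces $v_j(0)=0$ for every $j$, and then the $\delta$-condition $\sum_j v_j'(0)=\alpha v_1(0)=0$ is automatic. Hence on this subspace $\mathbf{L}^\alpha_{1,k}$ coincides with the direct-sum Dirichlet operator $\mathbf{L}^D_{1,k}=\bigoplus_{j=1}^N L^D_j$, where $L^D_j=-d^2/dx^2+\omega-p\varphi_{k,j}^{p-1}$ on $L^2(\mathbb R_+)$ with Dirichlet boundary condition at $0$. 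Since $\varphi_{k,1}=\cdots=\varphi_{k,k}$ and $\varphi_{k,k+1}=\cdots=\varphi_{k,N}$, decomposing $(L^2_k(\mathcal G))^\perp$ into the antisymmetric modes of each block exhibits $\mathbf{L}^\alpha_{1,k}|_{(L^2_k(\mathcal G))^\perp}$ as $k-1$ copies of $L^D_1$ plus $N-k-1$ copies of $L^D_{k+1}$, and therefore its Morse index equals $(k-1)\,n(L^D_1)+(N-k-1)\,n(L^D_{k+1})$.

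It remains to count negative eigenvalues of a single half-line Dirichlet operator $L^D_j$. If $\varphi_{k,j}$ is a \emph{tail} (monotone on $\mathbb R_+$, so $\varphi_{k,j}'$ never vanishes there), the factorization
\begin{equation*}
-v''+\omega v-p\varphi_{k,j}^{p-1}v=-\tfrac{1}{\varphi_{k,j}'}\tfrac{d}{dx}\Bigl[(\varphi_{k,j}')^2\tfrac{d}{dx}\bigl(v/\varphi_{k,j}'\bigr)\Bigr]
\end{equation*}
together with the Dirichlet condition $v(0)=0$, which annihilates the boundary contribution, gives the identity $(L^D_j v,v)=\int_0^\infty(\varphi_{k,j}')^2\bigl[(v/\varphi_{k,j}')'\bigr]^2\,dx\ge 0$, so $n(L^D_j)=0$. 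If $\varphi_{k,j}$ is a \emph{bump} with interior maximum at $x_{k,j}>0$, extend it to the full soliton $\varphi^{\mathrm{full}}$ on $\mathbb R$ and denote the corresponding full-line Schr\"odinger operator by $L^{\mathrm{full}}$; Sturm oscillation applied to the $L^2(\mathbb R)$ zero-energy solution $(\varphi^{\mathrm{full}})'$, which vanishes exactly once, yields $n(L^{\mathrm{full}})=1$. Because the extension-by-zero map $H^1_0(\mathbb R_+)\hookrightarrow H^1(\mathbb R)$ is isometric and preserves the quadratic form, the min--max principle gives $n(L^D_j)\le n(L^{\mathrm{full}})=1$.

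Assembling the pieces: for $\alpha<0$ the first $k$ edges carry bumps and the last $N-k$ carry tails, so $n(\mathbf{L}^\alpha_{1,k}|_{(L^2_k)^\perp})\le(k-1)\cdot 1+(N-k-1)\cdot 0=k-1$, hence $n(\mathbf{L}^\alpha_{1,k})\le 2+(k-1)=k+1$; for $\alpha>0$ the roles are swapped, giving $n(\mathbf{L}^\alpha_{1,k}|_{(L^2_k)^\perp})\le 0+(N-k-1)\cdot 1=N-k-1$ and therefore $n(\mathbf{L}^\alpha_{1,k})\le 1+(N-k-1)=N-k$. The main delicate step is the identification of $\mathbf{L}^\alpha_{1,k}|_{(L^2_k(\mathcal G))^\perp}$ with the decoupled Dirichlet operator, which explains structurally why the extra "$+1$" appears only in the attractive case (it is entirely produced by the $L^2_k$ part through Proposition \ref{n(L_1)}, not by the complement); once this decoupling is established, the bump/tail Morse-index bounds follow from a one-line factorization for tails and a standard min--max comparison with the full-line soliton operator for bumps.
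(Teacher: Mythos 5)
Your argument is correct, but it takes a genuinely different route from the paper's. The paper proves the bound by pure extension theory: it restricts $\mathbf{L}^\alpha_{1,k}$ to the symmetric operator $\mathbf{\widetilde L}_{0,k}$ obtained by imposing the additional vanishing conditions $v_1(0)=\dots=v_N(0)=0$ and $v_j(b_k)=0$ on the bump edges, shows via the factorization through $\varphi_{k,j}'$ that this restriction is non-negative, computes its deficiency indices to be $k+1$ (resp.\ $N-k$ for $\alpha>0$), and invokes Proposition \ref{semibounded} to conclude that any self-adjoint extension has at most that many negative eigenvalues. You instead split $L^2(\mathcal G)$ into the reducing subspaces $L^2_k(\mathcal G)$ and its orthogonal complement, quote Proposition \ref{n(L_1)} for the symmetric sector, observe that on the complement the vertex coupling decouples into scalar Dirichlet problems (the mean-zero conditions force $v_j(0)=0$ and make the flux condition vacuous), and bound the Morse index edge by edge: $0$ for tails via the same factorization, $\le 1$ for bumps via min--max against the full-line soliton operator. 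The trade-off is clear: the paper's proof is self-contained and independent of the perturbation-theoretic machinery behind Proposition \ref{n(L_1)}, whereas yours depends on it; in exchange, your decomposition avoids the adjoint computation \eqref{adjoint_graph'} and the deficiency-index count, and it is more transparent about where the negative directions live --- all the ``extra'' ones sit in the antisymmetric sector, at most one per bump edge beyond the first, while the $2$ (resp.\ $1$) from the symmetric sector is exactly the content of Proposition \ref{n(L_1)}. Both proofs ultimately rest on the same factorization identity \eqref{identity_graph_k}; you apply it componentwise on the tail edges only, the paper applies it globally after excising the interior zeros of $\varphi_{k,j}'$ at $b_k$.
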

\begin{proof}  $(i)$ In what follows  we will use the notation $$\opl^\alpha_k=\left(\Big(-\frac{d^2}{dx^2}+\omega-p(\varphi_{k,j})^{p-1}\Big)\delta_{i,j}\right).$$
First,  note that $\mathbf{L}^\alpha_{1,k}$ is the self-adjoint extension of the following symmetric operator
\begin{equation*}
\begin{split}
\mathbf{\widetilde L}_{0,k}=\opl^\alpha_k,\quad \dom(\mathbf{\widetilde L}_{0,k})=\left\{\begin{array}{c}
\bb V\in H^2(\mathcal{G}):  v_1(0)=...=v_N(0)=0,\\
\sum\limits_{j=1}^N  v_j'(0)=0,\, v_1(b_k)=...=v_k(b_k)=0
\end{array}\right\},
\end{split}
\end{equation*}
where $b_k=\frac{2}{(p-1)\sqrt{\omega}}a_k$ and $a_k$ is defined in Theorem \ref{1bump}.
Below we show that the operator $\mathbf{\widetilde L}_{0,k}$ is non-negative, and  $n_\pm(\mathbf{\widetilde L}_{0,k})=k+1$. Let us  show that the   adjoint operator of $\mathbf{\widetilde L}_{0,k}$ is given by
\begin{equation}\label{adjoint_graph'}
\begin{split}
&\mathbf{\widetilde L}_{0,k}^*=\opl^\alpha_k,\\ &\dom(\mathbf{\widetilde L}_{0,k}^*)=\left\{\begin{array}{c}\bb{V}\in L^2(\mathcal{G}): v_1(0)=...=v_N(0),\, v_{k+1},...,v_N\in H^2(\mathbb{R}_+),\\v_1,...,v_k\in H^2(\mathbb{R}_+\setminus\{b_k\})\cap H^1(\mathbb{R}_+)
\end{array}\right\}.
\end{split}
\end{equation}
Using standard arguments, one can prove that   $\mathbf{\widetilde L}_{0,k}^*=\opl^\alpha_k$ (see  \cite[Chapter V, \S 17]{Nai67}).
We denote $$D_{0,k}^*:=\left\{\begin{array}{c}\bb{V}\in L^2(\mathcal{G}): v_1(0)=...=v_N(0),\, v_{k+1},...,v_N\in H^2(\mathbb{R}_+),\\v_1,..,v_k\in H^2(\mathbb{R}_+\setminus\{b_k\})\cap H^1(\mathbb{R}_+)
\end{array}\right\}.$$
It is easily seen that the inclusion  $D_{0,k}^*\subseteq \dom(\mathbf{\widetilde L}_{0,k}^*)$ holds. Indeed, for any $\mathbf{U}=(u_j)_{j=1}^N\in D_{0,k}^*$ and $\bb V=(v_j)_{j=1}^N\in \dom(\mathbf{\widetilde L}_{0,k})$, denoting $\mathbf{U}^*=\opl^\alpha_k(\mathbf{U})\in L^2(\mathcal{G})$, we get
\begin{equation*}
\begin{split}
&(\mathbf{\widetilde L}_{0,k}\mathbf{V}, \mathbf{U})_2=(\opl^\alpha_k(\mathbf{V}),\bb U)_2\\&=(\bb V,\opl^\alpha_k(\mathbf{U}))_2+\sum\limits_{j=1}^N\left[-v'_ju_j+v_ju'_j\right]_0^\infty+\sum\limits_{j=1}^k\left[v'_ju_j-v_ju'_j\right]_{b_k-}^{b_k+}\\&=(\bb V,\opl^\alpha_k(\mathbf{U}))_2=(\bb V, \bb U^*)_2,
\end{split}
\end{equation*}
which, by definition of the adjoint operator, means that $\mathbf{U}\in \dom(\mathbf{\widetilde L}_{0,k}^*)$ or $D_{0,k}^*\subseteq \dom(\mathbf{\widetilde L}_{0,k}^*)$.

Let us show the inverse inclusion $D_{0,k}^*\supseteq \dom(\mathbf{\widetilde L}_{0,k}^*)$. Take $\bb U\in\dom(\mathbf{\widetilde L}_{0,k}^*)$, then for any $\bb V\in \dom(\mathbf{\widetilde L}_{0,k})$ we have
\begin{equation*}
\begin{split}
&(\mathbf{\widetilde L}_{0,k}\bb V, \bb U)_2=(\opl^\alpha_k(\bb V), \bb U)_2\\&=(\bb V, \opl^\alpha_k(\bb U))_2+\sum\limits_{j=1}^N\left[-v'_ju_j+v_ju'_j\right]_0^\infty+\sum\limits_{j=1}^k\left[v'_ju_j-v_ju'_j\right]_{b_k-}^{b_k+}\\&=(\bb V, \mathbf{\widetilde L}_{0,k}^*\bb U)_2=(\bb V, \opl^\alpha_k(\bb U))_2.
\end{split}
\end{equation*}
Thus, we arrive at the equality
\begin{equation}\label{adjoint'}
\begin{split}
  &\sum\limits_{j=1}^N\left[-v'_ju_j+v_ju'_j\right]_0^\infty+\sum\limits_{j=1}^k\left[v'_ju_j-v_ju'_j\right]_{b_k-}^{b_k+}\\&=\sum\limits_{j=1}^N v'_j(0){u_j(0)}+\sum\limits_{j=1}^kv'(b_k)({u_j(b_k+)}-{u_j(b_k-)})=0
\end{split}
\end{equation}
for any $\bb V\in \dom(\mathbf{\widetilde L}_{0,k})$.

$\bullet$ Let ${\bb W}=(w_j)_{j=1}^N\in \dom(\mathbf{\widetilde L}_{0,k})$ be such that $$w'_3(0)=...=w'_N(0)=w'_1(b_k)=...=w'_k(b_k)=0.$$
 Then   for  $\bb U\in \dom(\mathbf{\widetilde L}_{0,k}^*)$ from  \eqref{adjoint'} it follows that
\begin{equation}\label{adjoint1'}
\sum\limits_{j=1}^N w_{j} '(0)u_j(0)=w'_1(0){u_1(0)}+w'_2(0){u_2(0)}=0.
\end{equation}
Recalling  $\sum\limits_{j=1}^N  w_j'(0)= w_1'(0)+w_2'(0)=0$ and assuming $w_2'(0)\neq 0$, we obtain from \eqref{adjoint1'} the equality $u_1(0)=u_2(0)$.
 Repeating the similar arguments for ${\bb W}=({w}_j)_{j=1}^N\in \dom(\mathbf{\widetilde L}_{0,k})$ such that ${w}'_4(0)=...= {w}'_N(0)=w'_1(b_k)=...=w'_k(b_k)=0$, we  get $u_1(0)=u_2(0)=u_3(0)$ and so on. Finally taking ${\bb W}=({w}_j)_{j=1}^N\in \dom(\mathbf{\widetilde L}_{0,k})$ such that ${w}'_N(0)=w'_1(b_k)=...=w'_k(b_k)=0$, we  arrive at $u_1(0)=u_2(0)=...=u_{N-1}(0)$ and consequently $u_1(0)=u_2(0)=...=u_{N}(0)$.

$\bullet$  Let ${\bb W}=(w_j)_{j=1}^N\in \dom(\mathbf{\widetilde L}_{0,k})$ be  such that $w'_1(0)=...=w'_N(0)=w'_2(b_k)=...=w'_k(b_k)=0$, then from  \eqref{adjoint'} it follows 
\begin{equation*}\label{adjoint2}
\sum\limits_{j=1}^k w'_{j}(b_k)({u_j(b_k+)}-u_j(b_k-))=w'_1(b_k)({u_1(b_k+)}-{u_1(b_k-)})=0.
\end{equation*}
 Assuming  $w'_1(b_k)\neq 0$, we get $u_1(b_k+)=u_1(b_k-)$ or $u_1\in H^2(\mathbb{R}_+\setminus\{b_k\})\cap H^1(\mathbb{R}_+)$. Analogously we can show that  $u_j\in H^2(\mathbb{R}_+\setminus\{b_k\})\cap H^1(\mathbb{R}_+)$ for any $j\in\{1,...,k\}$.
 Thus, $\bb U\in D_{0,k}^*$ or $D_{0,k}^*\supseteq \dom(\mathbf{\widetilde L}_{0,k}^*)$ and \eqref{adjoint_graph'} holds.

Let us  show that the operator $\mathbf{\widetilde L}_{0,k}$ is non-negative. First, note that every component of the vector $\mathbf{V}=(v_j)_{j=1}^N\in H^2(\mathcal{G})$ satisfies the following identity
\begin{equation}\label{identity_graph_k}
-v_j''+\omega v_j-p(\varphi_{k,j})^{p-1}v_j=
\frac{-1}{\varphi_{k,j}'}\frac{d}{dx}\left[(\varphi_{k,j}')^2\frac{d}{dx}\left(\frac{v_j}{\varphi_{k,j}'}\right)\right],\,\, x\in\mathbb{R}_+\setminus \{b_k\}.
\end{equation}
Moreover, for $j\in\{k+1,...,N\}$ the above equality holds also for $b_k$ since $\varphi'_{k,j}(b_k)\neq 0,$  for $j\in\{k+1,...,N\}$.
Using the above equality and integrating by parts, we get for
 $\mathbf{V}\in \dom(\mathbf{\widetilde L}_{0,k})$
 \begin{equation*}\label{nonneJ_graph}\begin{split}
&(\mathbf{\widetilde L}_{0,k}\mathbf{V},\mathbf{V})_2=\sum\limits_{j=1}^k \Big( \int\limits_{0}^{b_k-} + \int\limits_{b_k+}^{+\infty}\Big)
(\varphi_{k,j} ')^2\left[\frac{d}{dx}\left(\frac{v_j}{\varphi_{k,j} '}\right)\right]^2dx\\&+
\sum\limits_{j=k+1}^N\int\limits^{\infty}_{0}(\varphi_{k,j}')^2\left[\frac{d}{dx}\left(\frac{v_j}{\varphi_{k,j} '}\right)\right]^2dx+
\sum\limits_{j=1}^N\left[-v_j'{v}_j+v_j^2\frac{\varphi_{k,j}''}{\varphi_{k,j}'}\right]^{\infty}_{0}\\&+\sum\limits_{j=1}^k\left[v_j'{v}_j-v_j^2\frac{\varphi_{k,j}''}{\varphi_{k,j}'}\right]^{b_k+}_{b_k-}=\sum\limits_{j=1}^k \Big( \int\limits_{0}^{b_k-} + \int\limits_{b_k+}^{+\infty}\Big)
(\varphi_{k,j} ')^2\left[\frac{d}{dx}\left(\frac{v_j}{\varphi_{k,j} '}\right)\right]^2dx\\&+
\sum\limits_{j=k+1}^N\int\limits^{\infty}_{0}(\varphi_{k,j}')^2\left[\frac{d}{dx}\left(\frac{v_j}{\varphi_{k,j} '}\right)\right]^2dx\geq 0.
\end{split}
\end{equation*}

The  equality $\sum\limits_{j=1}^k\left[v_j'{v}_j-v_j^2\frac{\varphi_{k,j}''}{\varphi_{k,j}'}\right]^{b_k+}_{b_k-}=0$ is due to the fact that  $b_k$ is a first-order  zero  for $\varphi_{k,j}'$ (i.e. $\varphi_{k,j}''(b_k)\neq 0$).
 Consider
 $$\bb L_{k}= \left(\Big(-\frac{d^2}{dx^2}\Big)\delta_{i,j}\right), \quad \dom({\bb L}_{k})=\dom(\mathbf{\widetilde L}_{0,k}).
 $$
 It is obvious that  $\dom(\mathbf{\widetilde L}_{0,k}^*)=\dom({\bb L}_{k}^*)$.
 Thus, due to  Neumann formula \eqref{d6}, we obtain the decomposition (when $\mathbf{\widetilde L}_{0,k}$ and $\bb L_k$ are assumed to act on complex-valued functions)
\begin{equation*}\label{Neumann}
\dom(\mathbf{\widetilde L}_{0,k}^*)=\dom(\mathbf{\widetilde L}_{0,k})\oplus\mathcal{N}_+(\widetilde{\bb L}_{0,k})\oplus\mathcal{N}_{-}(\widetilde{\bb L}_{0,k})=\dom(\mathbf{\widetilde L}_{0,k})\oplus\mathcal{N}_+({\bb L}_{k})\oplus\mathcal{N}_{-}({\bb L}_{k}),
\end{equation*}
where $\mathcal N_{\pm }({\bb L}_{k})=\Span\{\bb\Psi^0_{\pm i},\bb\Psi^1_{\pm i},..., \bb\Psi^k_{\pm i}\}$, with
 $\bb\Psi^0_{\pm i}=\left(e^{i\sqrt{\pm i}(x-b_k)}\right)_{j=1}^N$ and  $$\bb\Psi^m_{\pm i}=\left(\underset{\bb 1}{e^{i\sqrt{\pm i}(x-b_k)}},...,\underset{\bb m-1}{e^{i\sqrt{\pm i}(x-b_k)}},\underset{\bb m}{e^{i\sqrt{\pm i}(|x-b_k|-2b_k)}},\underset{\bb m+1}{e^{i\sqrt{\pm i}(x-b_k)}},..., \underset{\bb N}{e^{i\sqrt{\pm i}(x-b_k)}}\right),$$ where $m\in\{1,...,k\}.$ Note that $\Im(\sqrt{\pm i})$ is assumed to be positive.

Since $n_\pm (\bb L_k)=k+1$, by \cite[Chapter IV, Theorem 6]{Nai67}, it follows that $n_{\pm}(\mathbf{\widetilde L}_{0,k})=k+1$. Finally, due to  Proposition \ref{semibounded},  $n(\mathbf{L}^\alpha_{1,k})\leq k+1$.

$(ii)$ The proof is similar. In particular, we need to consider the operator $\bb L^\alpha_{1,k}$ as  the self-adjoint extension of the non-negative symmetric operator \begin{equation*}
\begin{split}
\mathbf{\widetilde L}_{0,N-k}=\opl^\alpha_k,\quad \dom(\mathbf{\widetilde L}_{0,N-k})=\left\{
\bb V\in D_\alpha: v_{k+1}(b_k)=...=v_N(b_k)=0 \right\},
\end{split}
\end{equation*}
where $b_k=-\frac{2}{(p-1)\sqrt{\omega}}a_k$.
The deficiency indices of $\mathbf{\widetilde L}_{0,N-k}$ equal $N-k$ (when $\mathbf{\widetilde L}_{0,N-k}$ is assumed to act on complex-valued functions). Indeed,  basically $\mathbf{\widetilde L}_{0,N-k}$ is the restriction of the operator $\bb L^\alpha_{1,k}$ onto the subspace of  codimension $N-k$. To show the non-negativity of $\mathbf{\widetilde L}_{0,N-k}$,  we need to use formula \eqref{identity_graph_k}. It induces
\begin{equation*}\label{nonneg_graph}\begin{split}
&(\mathbf{\widetilde L}_{0,N-k}\mathbf{V},\mathbf{V})_2=\sum\limits_{j=k+1}^N \Big( \int\limits_{0}^{b_k-}+\int\limits_{b_k+}^{+\infty}\Big)
(\varphi_{k,j} ')^2\left[\frac{d}{dx}\left(\frac{v_j}{\varphi_{k,j} '}\right)\right]^2dx\\&+
\sum\limits_{j=1}^k\int\limits^{\infty}_{0}(\varphi_{k,j}')^2\left[\frac{d}{dx}\left(\frac{v_j}{\varphi_{k,j} '}\right)\right]^2dx  +
\sum\limits_{j=1}^N\left[-v_j'{v}_j+v_j^2\frac{\varphi_{k,j}''}{\varphi_{k,j}'}\right]^{\infty}_{0}\\&+\sum\limits_{j=k+1}^N\left[v_j'{v}_j-v_j^2\frac{\varphi_{k,j}''}{\varphi_{k,j}'}\right]^{b_k+}_{b_k-}=\sum\limits_{j=k+1}^N \Big( \int\limits_{0}^{b_k-} + \int\limits_{b_k+}^{+\infty}\Big)
(\varphi_{k,j} ')^2\left[\frac{d}{dx}\left(\frac{v_j}{\varphi_{k,j} '}\right)\right]^2dx\\&+
\sum\limits_{j=1}^k\int\limits^{\infty}_{0}(\varphi_{k,j}')^2\left[\frac{d}{dx}\left(\frac{v_j}{\varphi_{k,j} '}\right)\right]^2dx+
\sum\limits_{j=1}^N\left[v_j'(0){v}_j(0)-v_j^2(0)\frac{\varphi_{k,j}''(0)}{\varphi_{k,j}'(0)}\right]\geq 0.
\end{split}
\end{equation*}
Indeed, $\sum\limits_{j=k+1}^N\left[v_j'{v}_j-v_j^2\frac{\varphi_{k,j}''}{\varphi_{k,j}'}\right]^{b_k+}_{b_k-}=0$ (see the proof of item $(i)$). Moreover,
 $$\sum\limits_{j=1}^N\left[v_j'(0){v}_j(0)-v_j^2(0)\frac{\varphi_{k,j}''(0)}{\varphi_{k,j}'(0)}\right]=\frac{v_1^2(0)(p-1)(\omega(N-2k)^2-\alpha^2)}{2\alpha}\geq 0.$$
 Finally,  due to Proposition \ref{semibounded}, we get  the result.
\end{proof}
\begin{remark}\label{Morse_est}
\begin{itemize}
\item[$(i)$] It is easily seen that
 $$\sum\limits_{j=1}^N\left[v_j'(0){v}_j(0)-v_j^2(0)\frac{\varphi_{k,j}''(0)}{\varphi_{k,j}'(0)}\right]=\frac{v_1^2(0)(p-1)(\omega(N-2k)^2-\alpha^2)}{2\alpha}\leq 0$$
  for $\alpha<0$, and therefore the  restriction  of  $\bb L^\alpha_{1,k}$  onto the subspace
 $$\{\bb V\in D_\alpha: v_1(b_k)=...=v_k(b_k)=0\}$$ is not  a non-negative operator. Thus, we need to assume additionally  that $v_1(0)=...=v_N(0)=0$.
  \item[$(ii)$]  The result of  item $(ii)$ (for $\alpha>0$) of the above Proposition can be extended to the case of $k=0$, i.e. $n(\bb L_{1,0}^\alpha)\leq N$. %Note that in \cite[Remark 3.12]{AngGol16} we stated that  $n(\bb L_{1,0}^\alpha)\leq N+1$.
   \end{itemize}
\end{remark}


\begin{thebibliography} {20}

\bibitem{AdaNoj15} 
\newblock R. Adami, C. Cacciapuoti, D. Finco and D. Noja,
\newblock {Stable standing waves for a NLS on star graphs as local minimizers of the constrained energy},
\newblock \emph{J. Differential Equations}, {\bf 260} (2016), 7397--7415.

\bibitem{AdaNoj14} 
\newblock R. Adami, C. Cacciapuoti, D. Finco and D. Noja,
\newblock {Variational properties and orbital stability of standing waves for NLS equation on a star graph},
\newblock \emph{J. Differential Equations}, {\bf 257} (2014), 3738--3777.

\bibitem{AdaNoj14a} 
\newblock R. Adami, C. Cacciapuoti, D. Finco and D. Noja,
\newblock {Constrained energy minimization and orbital stability for the NLS equation on a star graph},
\newblock \emph{Ann. Inst. H. Poincar\'e Anal. Non Lin\'eaire}, {\bf 31} (2014), 1289--1310.

\bibitem{AlbGes05} 
\newblock S. Albeverio, F. Gesztesy, R. Hoegh-Krohn and H. Holden,
\newblock \emph{Solvable Models in Quantum Mechanics},
\newblock 2$^{nd}$ edition, AMS Chelsea Publishing, Providence, RI, 2005.

\bibitem{AngGol16}
\newblock J. Angulo and N. Goloshchapova,
\newblock Extension theory approach in the stability of the standing waves for the NLS equation with point interactions on a star graph,
\newblock preprint, arXiv:1507.02312v5.

\bibitem{AngLop08} 
\newblock J. Angulo, O. Lopes and A. Neves,
\newblock {Instability of travelling waves for weakly coupled KdV systems},
\newblock \emph{Nonlinear Anal.}, {\bf 69} (2008), 1870--1887.

\bibitem{AngNat16} 
\newblock J. Angulo and F. Natali,
\newblock On the instability of periodic waves for dispersive equations,
\newblock \emph{Differential Integral Equations}, \textbf{29} (2016), 837--874.

\bibitem{BK} 
\newblock G. Berkolaiko\ and\ P. Kuchment,
\newblock \emph{Introduction to Quantum Graphs},
\newblock Mathematical Surveys and Monographs, 186, Amer. Math. Soc., Providence, RI, 2013.

\bibitem{BlaExn08} 
\newblock J. Blank, P. Exner and M. Havlicek,
\newblock \emph{Hilbert Space Operators in Quantum Physics},
\newblock 2$^{nd}$ edition, Theoretical and Mathematical Physics, Springer, New York, 2008.

\bibitem{CacFin17} 
\newblock C. Cacciapuoti, D. Finco and D. Noja,
\newblock {Ground state and orbital stability for the NLS equation on a general starlike graph with potentials},
\newblock \emph{Nonlinearity}, {\bf 30} (2017), 3271--3303.

\bibitem{Caz} 
\newblock T. Cazenave,
\newblock \emph{Semilinear Schr\"odinger Equations},
\newblock American Mathematical Society, AMS. Lecture Notes, v. 10, 2003.

\bibitem{DeiPar11} 
\newblock P. Deift and J. Park,
\newblock {Long-time asymptotics for solutions of the NLS equation with a delta potential and even initial data},
\newblock \emph{IMRN}, {\bf 24} (2011), 5505--5624.

\bibitem{Exn08} 
\newblock P. Exner, J. P. Keating, P. Kuchment, T. Sunada and A. Teplyaev,
\newblock \emph{Analysis on Graphs and Its Applications},
\newblock Proceedings of Symposia in Pure Mathematics, 77, American Mathematical Society, Providence, RI, 2008.

\bibitem{GrilSha87} 
\newblock M. Grillakis, J. Shatah and W. Strauss,
\newblock {Stability theory of solitary waves in the presence of symmetry. I},
\newblock \emph{J. Funct. Anal.}, {\bf 74} (1987), 160--197.

\bibitem{GrilSha90} 
\newblock M. Grillakis, J. Shatah and W. Strauss,
\newblock {Stability theory of solitary waves in the presence of symmetry. II},
\newblock \emph{J. Funct. Anal.}, {\bf 94} (1990), 308--348.

\bibitem{HenPer82} 
\newblock D. B. Henry, J. F. Perez\ and\ W. F. Wreszinski,
\newblock {Stability theory for solitary-wave solutions of scalar field equations},
\newblock \emph{Comm. Math. Phys.}, {\bf 85} (1982), 351--361.

\bibitem{Kai17}
\newblock A. Kairzhan,
\newblock Orbital instability of standing waves for NLS equation on star graphs,
\newblock preprint, arXiv:1712.02773v2.

\bibitem{KaiPel17a} 
\newblock A. Kairzhan\ and\ D. E. Pelinovsky,
\newblock Spectral stability of shifted states on star graphs,
\newblock \emph{J. Phys. A}, {\bf 51} (2018), 095203, 23 pp.

\bibitem{KaiPel17b} 
\newblock A. Kairzhan and D. E. Pelinovsky,
\newblock {Nonlinear instability of half-solitons on star graphs,}
\newblock \emph{J. Differential Equations}, \textbf{264} (2018), 7357--7383.

\bibitem{KamOht09} 
\newblock M. Kaminaga\ and\ M. Ohta,
\newblock Stability of standing waves for nonlinear Schr\"odinger equation with attractive delta potential and repulsive nonlinearity,
\newblock \emph{Saitama Math. J.}, {\bf 26} (2009), 39--48.

\bibitem{kato} 
\newblock T. Kato,
\newblock \emph{Perturbation Theory for Linear Operators},
\newblock Die Grundlehren der mathematischen Wissenschaften, Band 132, Springer-Verlag New York, Inc., New York, 1966.

\bibitem{K} 
\newblock P. Kuchment,
\newblock {Quantum graphs. I. Some basic structures},
\newblock \emph{Waves Random Media}, {\bf 14} (2004), S107--S128.

\bibitem{CozFuk08} 
\newblock S. Le Coz, R. Fukuizumi, G. Fibich, B. Ksherim and Y. Sivan,
\newblock {Instability of bound states of a nonlinear Schr\"odinger equation with a Dirac potential},
\newblock \emph{Physica D}, {\bf 237} (2008), 1103--1128.

\bibitem{LinPon09}
\newblock F. Linares and G. Ponce,
\newblock \emph{Introduction to Nonlinear Dispersive Equations},
\newblock 2$^{nd}$ edition, Universitext, Springer, New York, 2009.

\bibitem{Mug15} 
\newblock D. Mugnolo (editor),
\newblock \emph{Mathematical Technology of Networks},
\newblock Bielefeld, December 2013, Springer Proceedings in Mathematics $\&$ Statistics 128, 2015.

\bibitem{Nai67} 
\newblock M. A. Naimark,
\newblock \emph{Linear Differential Operators},
\newblock (Russian), 2$^{nd}$ edition, revised and augmented, Izdat. ``Nauka'', Moscow, 1969.

\bibitem{Noj14} 
\newblock D. Noja,
\newblock {Nonlinear Schr\"odinger equation on graphs: recent results and open problems},
\newblock \emph{Philos. Trans. R. Soc. Lond. Ser. A Math. Phys. Eng. Sci.}, {\bf 372} (2014), 20130002, 20 pp.

\bibitem{oh} 
\newblock M. Ohta,
\newblock {Instability of bound states for abstract nonlinear Schr\"odinger equations},
\newblock \emph{J. Funct. Anal.}, {\bf 261} (2011), 90--110.

\bibitem{Post12} 
\newblock O. Post,
\newblock \emph{Spectral Analysis on Graph-Like Spaces},
\newblock Lecture Notes in Mathematics, 2039, Springer, Heidelberg, 2012.

\bibitem{RS} 
\newblock M. Reed and B. Simon,
\newblock \emph{Methods of Modern Mathematical Physics. IV. Analysis of Operators,}
\newblock Academic Press, New York, 1978.


\end{thebibliography}
\end{document}